\newtheorem{theorem}{Theorem}
\newtheorem{definition}[theorem]{Definition}
\newtheorem{lemma}[theorem]{Lemma}
\newtheorem{prop}[theorem]{Proposition}
\def\N{\mathbb N}
\def\R{\mathbb R}
\newcommand{\invstproj}{p_\text{st}}
\providecommand{\abs}[1]{{\lvert#1\rvert}}
\providecommand{\norm}[1]{\lVert#1\rVert}
\DeclareMathOperator*{\argmin}{arg\,min}
\DeclareMathOperator*{\tr}{tr}
\newcommand{\Lip}{\operatorname{Lip}}
\newcommand{\vfield}[1]{\mathbf{#1}}
\title{Projection-Based Finite Elements for Nonlinear Function Spaces}
\author[Grohs]{Philipp Grohs}
\address{Philipp Grohs\\
Universität Wien\\
Fakultät für Mathematik\\
Oskar Morgenstern Platz 1\\
1090 Wien\\
Austria
}
\email{philipp.grohs@univie.ac.at}
\author[Hardering]{Hanne Hardering}
\address{Hanne Hardering\\
Technische Universität Dresden\\
Institut für Numerische Mathematik\\
Zellescher Weg 12--14\\
01069 Dresden\\
Germany}
\email{hanne.hardering@tu-dresden.de}
\author[Sander]{Oliver Sander}
\address{Oliver Sander\\
Technische Universität Dresden\\
Institut für Numerische Mathematik\\
Zellescher Weg 12--14\\
01069 Dresden\\
Germany}
\email{oliver.sander@tu-dresden.de}
\author[Sprecher]{Markus Sprecher}
\address{Markus Sprecher\\
ETH Zürich\\
Seminar für Angewandte Mathematik\\
Rämistraße~101\\
8092 Zürich\\
Switzerland}
\begin{document}

\begin{abstract}
We introduce a novel type of approximation spaces for functions with values in a nonlinear manifold.
The discrete functions are constructed by piecewise polynomial interpolation in a Euclidean embedding space,
and then projecting pointwise onto the manifold. We show optimal interpolation error bounds with respect
to Lebesgue and Sobolev norms.  Additionally, we show similar bounds for the test functions, i.e.,
variations of discrete functions. Combining these results with a nonlinear Céa lemma, we prove optimal
$L^2$ and $H^1$ discretization error bounds for harmonic maps from a planar domain into a
smooth manifold.  All these error bounds are also verified numerically.
\end{abstract}

\maketitle

\noindent \emph{AMS classification:
                65N30, 
                65D05  
                }  \\

\noindent \emph{Keywords:} geometric finite elements, projection, nonlinear manifold, interpolation errors,
                           discretization errors, harmonic maps \\

We investigate the discrete approximation of functions from a Euclidean
domain $\Omega$ to a closed embedded submanifold $M$ of $\R^n$, $n \in \N$.
Such functions are involved in a variety of partial differential equations (PDEs), from fields like
liquid crystal physics \cite{alouges1997} and micromagnetics \cite{kohn}. In these applications,
the manifold is $M=S^2$, the unit sphere
in $\R^3$. In Cosserat-type material models \cite{sander_neff_birsan:2016,neff:2007,muench:2007}
the manifold is $M=\R^3 \times \text{SO}(3)$, where $\text{SO}(3)$ is the special orthogonal group.
Further examples are the investigation of harmonic maps into manifolds \cite{bartels_prohl:2007}, signal processing of manifold-valued signals \cite{donoho:2005}, and the denoising of manifold-valued images \cite{bacak2016second}.

We are interested in functions of Sobolev smoothness.  By this we mean functions from spaces
\begin{equation*}
 W^{k,p}(\Omega,M)
 \colonequals
 \Big\{ v \in W^{k,p}(\Omega,\R^n) \; : \; v(x) \in M\,\text{a.e.}\Big\},
\end{equation*}
where we denote by $W^{k,p}(\Omega,\R^{n})$ the standard Sobolev space for $k\in \N$
and $p\in [1,\infty]$.  Throughout the paper, $|\cdot |_{W^{l,p}}$ and $\|\cdot \|_{W^{l,p}}$ will denote the
corresponding Sobolev semi norm and full norm of $\R^n$-valued functions, respectively.

Spaces of approximating functions will be constructed by pointwise projection.
Given a finite element grid of $\Omega$, and a set of values $c_i \in M \subset \R^n$
at Lagrange points on $\Omega$, we construct nonlinear finite element functions by first interpolating
in $\R^n$ by piecewise polynomials in $\R^n$, and then projecting pointwise onto $M$. This results
in a finite-dimensional set of functions $V_{h}(\Omega,M)$ which, as it turns out, is a subspace of $W^{1,p}$
for arbitrary $p \ge 1$.
While the approach presented here is based on Lagrangian interpolation in $\R^n$, other
linear FE space can be used in principle (see~\cite{sprecher:2016} for an example).

The idea to generalize finite elements spaces by a pointwise projection operator
has already appeared several times~\cite{sprecher:2016,gawlik_leok:2017,sander:2016}.
For functions taking values in the special orthogonal group $SO(3)$, \citeauthor{gawlik_leok:2017}
have studied $C^{1}$-norms of interpolation errors~\cite{gawlik_leok:2017}.
We will extend these results to general closed submanifolds of $\R^n$, and to interpolation errors
in Sobolev norms.

To this end, let $Q_{\R^n}$ be the standard nodal interpolation operator for $n$-valued Lagrangian finite
elements, and set $Q_M \colonequals \mathcal{P} \circ Q_{\R^n}$ the interpolation operator with
a pointwise projection. For smooth manifolds, approximation qualities can be inferred from the
 linear ones of $V_{h}(\Omega,\R^{n})$, as we can switch back and forth between discrete functions
 into $M$ and into $\R^{n}$ via the definition of $Q_M$ and the identity
\begin{align*}
Q_{\R^{n}}\circ Q_{M}=Q_{\R^{n}}.
\end{align*}
An alternative proof that uses the Lipschitz continuity of the closest-point projection has
been given in \cite{sprecher:2016}.

A priori, test functions for manifold-valued settings are defined as variations of particular
manifold-valued functions. We show that test functions for functions defined by polynomial
interpolation and projection can also be constructed directly,
using Euclidean interpolation followed by a projection. We show
the same Sobolev interpolation error bounds for these discrete test functions as for the finite element functions themselves.

We then discuss finite element discretizations of PDEs with values in $M$. Prototypically,
we focus on harmonic maps from a domain $\Omega$ to $M$, which we regard as minimizers $u$ of
the Dirichlet energy in a suitable Sobolev space.
The corresponding discrete solution $u_h$ is defined as a local minimizer of the same energy
in $V_h(\Omega,M)$, which is well-defined because $V_h(\Omega,M)$ is suitably conforming.

To estimate $\|v_{h}-u\|_{W^{1,2}}$ we combine a simple nonlinear Céa lemma with the interpolation
results for $V_h(\Omega,M)$. To show optimal $L^2$ bounds we use the abstract theory of~\cite{hardering:2018},
showing that the four criteria stated there are fulfilled by projection-based finite elements.
We will also provide inverse estimates. In classical finite element theory they are used in many proofs,
e.g., in Nitsche's method of weighted norms for uniform convergence estimates \cite{ciarlet}.
In this work we will use them to justify a priori bounds on discrete minimizers of the harmonic
map energy.
Both interpolation and discretization error bounds are verified numerically in the two final chapters.

There is one alternative construction for conforming finite element spaces for manifold-valued problems,
known as geodesic finite elements~\cite{sander2012,sander2016,grohs:2013,grohs_hardering_sander:2015}.
To evaluate the relative merits of the two methods we briefly revisit their theoretical
relationship, and we repeat all numerical tests using geodesic finite elements.
We observe that while geodesic finite elements yield lower errors, projection-based finite elements
can be much faster.

\setcounter{tocdepth}{1}  
\tableofcontents

\section{Projection-based finite element spaces}

Let $\Omega$ be discretized by a finite union $\mathcal{G}$ of affine-equivalent, regular and quasi-uniform polyhedra $T_{h}$, such that the closures intersect in common faces.
On $\mathcal{G}$ we consider scalar-valued Lagrangian finite element spaces with the nodal basis
$(\phi_i)_{i \in I}\colon \Omega\rightarrow\R$ and associated Lagrange points
$(\xi_i)_{i\in I}\subset \Omega$.

We will define the space $V_h(\Omega,M)$ of projection-based finite elements as the image of an interpolation operator.
First we consider the canonical interpolation operator $Q_{\R^n}$ for continuous functions with values
in $\R^n$ into the space $V_{h}(\Omega,\R^{n})$ of $\R^n$-valued Lagrangian finite elements.
\begin{definition}
The interpolation operator $Q_{\R^n}\colon C(\Omega,\R^n)\rightarrow V_{h}(\Omega,\R^{n})$ corresponding
to a set of basis functions $(\phi_i)_{i \in I}\colon \Omega \to \R$ and nodes
$(\xi_i)_{i\in I}\subset \Omega$ is defined by
\begin{align*}
Q_{\R^n}v \colonequals \sum_{i\in I} v(\xi_i)\phi_{i} \quad \text{for all } v\in C(\Omega,\R^n).
\end{align*}
\end{definition}

For a manifold $M$ embedded in $\R^n$ and a function $v\in C(\Omega,M)$ the values of $Q_{\R^n}v$ will
in general not be on $M$ away from the $\xi_i$. To get $M$-valued functions we compose $Q_{\R^n}$ pointwise
with the closest-point projection
\begin{equation*}
P : \R^n \to M,
\qquad
P(q)\colonequals \argmin_{r\in M} \|r-q\|_{\R^{n}},
\end{equation*}
where $\|\cdot\|_{\R^{n}}$ denotes the Euclidean distance.
While the closest-point projection is usually not well defined for all $q\in \R^n$,
if $M$ is regular enough it is well defined in
a neighborhood $U_{\delta}\subset \R^n$ of $M$ \cite{Abatzoglou}.

This pointwise projection induces a superposition operator $\mathcal{P}$ by
\begin{equation*}
(\mathcal{P}v)(x)\colonequals P(v(x))
\end{equation*}
for all $x\in \Omega$ and $v\colon \Omega \to  \R^n$.
We then define $M$-valued interpolation by composition of $Q_{\R^n}$ and $\mathcal{P}$.

\begin{definition}
 Set
 \begin{equation*}
  C(\Omega,M;\rho)
    \colonequals
  \Big\{v\in C(\Omega,M)\;:\;  \operatorname{diam}(v(T_{h}))<\rho\ \forall T_{h}\in \mathcal{G} \Big\},
 \end{equation*}
 where the $\operatorname{diam}(B)\colonequals \sup_{p,q\in B}d(p,q)$ denotes the geodesic diameter of a subset $B\subset M$.
 Provided that $\rho$ is small enough, define the interpolation operator
 \begin{align*}
   Q_{M}\colon C(\Omega,M;\rho)\to C(\Omega,M)
   \qquad \text{by} \qquad
   Q_M\colonequals \mathcal{P} \circ Q_{\R^n}.
 \end{align*}
\end{definition}

The space of projection-based finite elements is defined as the range of this interpolation operator.

\begin{definition}
 Let $\Omega \subset \R^s$, $M\subset \R^n$ an embedded submanifold, and
 $P\colon U_{\delta}\subset \R^n \to M$ the closest-point projection.
 For a given set of basis functions $(\phi_i)_{i \in I}\colon \Omega\to \R$ we define
 \begin{equation}\label{eqdef:V}
  V_h(\Omega,M)  \colonequals
    \Big\{ v_{h} : \Omega \to M \; \text{s.t.} \;  \exists\, v \in C(\Omega,M) \text{ and } v_{h}=Q_Mv \Big\}.
 \end{equation}
\end{definition}
As the operator $Q_M$ only uses the values at the Lagrange nodes $(\xi_i)_{i\in I}$, we have the
equivalent definition
 \begin{equation*}
  V_{h}(\Omega,M)  \colonequals
    \bigg\{ v_{h} : \Omega \to M,\;   \exists (c_i)_{i\in I}\subset M \text{ s.t. } v_{h}=\mathcal{P}\bigg(\sum_{i\in I} c_i \phi_i\bigg)\bigg\}.
 \end{equation*}
It has to be noted that while there exist nodal values $(c_i)_{i \in I} \subset M$, $c_i = v_h(\xi_i)$
for any function $v_h \in V_h(\Omega,M)$,
for given values $(c_i)_{i\in I}\subset M$, there exists an interpolating function $v_h\in V_h(\Omega,M)$
only if the values $c_{i}$ are close enough depending on $M$ such that $\sum_{i\in I} c_i \phi_i\subset U_{\delta}$.

\subsection{Conformity}
The question of conformity of projection-based elements, i.e., whether $V_{h}(\Omega,M) \subset W^{1,p}(\Omega,M)$ holds, can be reduced to the continuity of the superposition operator $\mathcal{P}$ on $W^{1,p}(\Omega,M)$ and of the operator $Q_{\R^{n}}$.

Denote by $P^\prime(x)[y]\in T_{P(x)}M$ the differential of the closest-point projection $P$ at $x\in U_{\delta}$ applied to $y\in T_x \R^n \simeq \R^n$.
Let $\hat v_{h}(x)=\sum_{i\in I} c_i \phi_i(x) \in \R^n$ for given coefficients $c_i$, and $v_{h}=\mathcal{P}\hat v_{h}$.
By the chain rule we have
\begin{equation*}
\frac{\partial}{\partial x_j}v_{h}(x)=P^\prime\left(\sum_{i\in I} c_i \phi_i(x)\right)\left[\sum_{i\in I} c_i \frac{\partial\phi_i}{\partial x_j}(x)\right]
\end{equation*}
for every $x\in \Omega$ such that $P$ is differentiable at $\hat v_{h}(x)$ and $\phi_i$ is differentiable
at $x\in \Omega$ for all $i\in I$.

 If we assume that $M$ is a smooth embedded submanifold, there exists a tubular neighborhood such that the closest-point projection is smooth~\cite[Prop.\,6.1.8]{lee}.
 In particular, the pointwise norm of $P'$ can be estimated in terms of the radius of curvature using explicit calculations in terms of the local parametrization of the manifold~\cite{Abatzoglou}.
 Thus, the $W^{1,p}$-conformity of $V_h(\Omega,M)$ follows directly from the chain rule and smoothness of the Lagrange basis $(\phi_i)_{i\in I} \subset W^{1,p}$,
\begin{align*}
	|v_{h}|_{W^{1,p}}\leq \|P\|_{C^{1}}\sum_{i\in I} \|c_i\|_{\R^{n}}\ |\phi_i|_{W^{1,p}(\Omega)},
\end{align*}
where $\|P\|_{C^{1}}$ denotes the operator norm of the differential $P'$.

\subsection{Relationship to geodesic finite elements}

Projection-based finite elements are closely related to the geodesic finite elements proposed in
\cite{sander2012,sander2016,grohs:2013} and analyzed in \cite{grohs_hardering_sander:2015,hardering:2015}.
Geodesic finite elements are constructed by replacing polynomial interpolation of values $(c_i)_{i\in I}$
\begin{equation*}
 x
 \mapsto
 \sum_{i\in I} c_i \phi_i(x)
 =
 \argmin_{q \in \R^n} \sum_{i\in I} \phi_i(x) \|c_i -q\|_{\R^{n}}^2
\end{equation*}
by the weighted Riemannian center of mass
\begin{equation}
\label{eq:geodesic_interpolation}
x \mapsto
\argmin_{q \in M} \sum_{i\in I} \phi_i(x) \operatorname{d}(c_i,q)^2,
\end{equation}
where $\operatorname{d}(\cdot,\cdot)$ is the geodesic distance on $M$.
Unlike the construction by pointwise projection, \eqref{eq:geodesic_interpolation} is completely intrinsic,
and does not rely on an embedding space.
Well-posedness of this definition under suitable conditions on the $c_i$ is shown in \cite{sander2016,hardering:2015}.

As observed independently by \cite{sprecher:2016} and \cite{gawlik_leok:2017}, we recover
the projection-based interpolation if we replace the geodesic distance in \eqref{eq:geodesic_interpolation} by the Euclidean distance
of the embedding space
\begin{align*}
\argmin_{q \in M} \sum_{i\in I} \phi_i(x) \norm{c_i - q}_{\R^n}^2
&=\argmin_{q \in M} \bigg(\norm{q}^2_{\R^n}-2\bigg\langle q,\sum_{i\in I} \phi_i(x)c_i\bigg\rangle\bigg) \\
&=\argmin_{q \in M} \bigg\lVert q-\sum_{i\in I} \phi_i(x)c_i\bigg\rVert^{2}_{\R^n}\\
&= P\bigg(\sum_{i\in I} \phi_i(x)c_i\bigg).
\end{align*}
This does not mean that projection-based finite elements are equal to geodesic finite elements
for embedded manifolds. In general, even if the metric on $M$ is obtained by an isometric embedding
into Euclidean space, the distance $\operatorname{d}(\cdot,\cdot)$ is not the Euclidean distance in the surrounding space.
Instead, projection-based interpolation can be interpreted as geodesic finite elements for a general
metric space $(M,d)$ with a non-intrinsic metric. As far as we know, no general existence theory
and error estimates exist for this abstract setting.

\subsection{Preservation of isometries}

If an isometry $T\colon M\rightarrow M$ commutes with the projection-based interpolation operator $Q_M$
then the finite element space $V_{h}(\Omega,M)$ defined in \eqref{eqdef:V} is equivariant under this isometry.
In mechanics, this leads to the desirable property that discretizations of objective problems are again objective.
Unfortunately, for projection-based finite elements this commutativity only holds under special
circumstances.

\begin{definition}
An isometry $T\colon M\to M$ (w.r.t.\ the geodesic distance) is called extendable if there exists an isometry $\tilde{T}\colon \R^n\to \R^n$ with $\tilde{T}(p)=T(p)$ for all $p\in M$.
\end{definition}
Examples for extendable isometries $T$ are orthogonal transformations for the sphere and multiplication with special orthogonal matrices for $SO(n)$.

\begin{theorem}
Let $M\subset \R^n$ be a Riemannian submanifold, $P : \R^n \to M$ the closest-point projection,
$T\colon M\rightarrow M$ an extendable isometry and $(\phi_i)_{i\in I}$ a partition of unity. Then $T$ commutes with $Q_M$.
\end{theorem}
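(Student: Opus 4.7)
The plan is to show the pointwise identity $T(Q_Mv)(x)=Q_M(T\circ v)(x)$ for every $v\in C(\Omega,M;\rho)$ and every $x\in\Omega$, by factoring it through the Euclidean extension $\tilde T$. The strategy splits cleanly into three ingredients: (i) $\tilde T$ is an affine map on $\R^n$, (ii) the partition-of-unity property lets affine maps pass through $Q_{\R^n}$, and (iii) $\tilde T$ is an isometry of $\R^n$ mapping $M$ onto $M$, which forces it to commute with the closest-point projection $P$.

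First I would recall that every isometry of Euclidean space is affine, so $\tilde T$ takes the form $\tilde T(y)=Ay+b$ with $A$ orthogonal. Using $\sum_{i\in I}\phi_i(x)=1$, a direct computation gives
\begin{equation*}
\tilde T\Bigl(\sum_{i\in I}v(\xi_i)\phi_i(x)\Bigr)
= \sum_{i\in I}\bigl(Av(\xi_i)+b\bigr)\phi_i(x)
= \sum_{i\in I}\tilde T(v(\xi_i))\phi_i(x).
\end{equation*}
Since $v(\xi_i)\in M$, we have $\tilde T(v(\xi_i))=T(v(\xi_i))$, and therefore $\tilde T\circ Q_{\R^n}v = Q_{\R^n}(T\circ v)$.

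Next I would show that $\tilde T$ commutes with the closest-point projection, i.e.\ $\tilde T\circ P = P\circ \tilde T$ on the tubular neighbourhood $U_\delta$ (after possibly shrinking it so that $\tilde T(U_\delta)\subset U_\delta$, which is automatic since $\tilde T$ is an isometry and preserves distance to $M$). Because $T\colon M\to M$ is a bijective isometry, $\tilde T$ restricts to a bijection of $M$. Then for any $y\in U_\delta$, the isometry property gives
\begin{equation*}
\argmin_{r\in M}\|r-y\|_{\R^n}
= \argmin_{r\in M}\|\tilde T(r)-\tilde T(y)\|_{\R^n}
= \tilde T^{-1}\Bigl(\argmin_{s\in M}\|s-\tilde T(y)\|_{\R^n}\Bigr),
\end{equation*}
which rearranges to $\tilde T(P(y))=P(\tilde T(y))$. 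Combining this with the previous step,
\begin{equation*}
T(Q_Mv)(x) = \tilde T\bigl(P(Q_{\R^n}v(x))\bigr)
= P\bigl(\tilde T(Q_{\R^n}v(x))\bigr)
= P\bigl(Q_{\R^n}(T\circ v)(x)\bigr)
= Q_M(T\circ v)(x).
\end{equation*}

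The only mild obstacle is a domain check: one must verify that $T\circ v$ lies again in $C(\Omega,M;\rho)$ so that $Q_M(T\circ v)$ is defined, but this follows immediately from $T$ being a geodesic isometry, which preserves the diameter condition. No other step poses a real difficulty; the proof is essentially a bookkeeping exercise once the three ingredients are isolated.
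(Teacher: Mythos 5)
Your proof is correct and follows essentially the same route as the paper: extend $T$ to an affine isometry $\tilde T$ of $\R^n$ (the paper invokes the Mazur--Ulam theorem for the affinity), use the partition-of-unity property to pass $\tilde T$ through $Q_{\R^n}$, and use the fact that an isometry preserving $M$ preserves closest distances to commute $\tilde T$ with $P$. You merely spell out the details (including the $\argmin$ manipulation and the domain check for $T\circ v$) that the paper's terse proof leaves implicit.
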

\begin{proof}
Let $\tilde{T}$ be an extension of $T$ to $\R^n$.
As an isometry maps closest distances to closest distances, $\tilde{T}$ commutes with $P$. By the Mazur--Ulam theorem \cite{Ulam} there exists a linear map $A\colon \R^n \rightarrow \R^n$ with $\tilde{T}(q)=A(q)+\tilde{T}(0)$ for all $q\in \R^n$, so that $\tilde{T}$ obviously commutes with $Q_{\R^n}$.
\end{proof}
One can tell from this proof that only very few isometries $T : M \to M$ are extendable.
Indeed, in order to be extendable, $T$ needs to be the restriction of a rigid body motion of $\R^n$.
In contrast to this rather strong restriction, the geodesic interpolation rule~\eqref{eq:geodesic_interpolation}
is equivariant under any isometry of $M$ by construction.

\subsection{Discrete test functions and vector field interpolation}\label{sec::testfct}

The test function space for a function $u:\Omega \to M$ consists of vector fields along $u$ that correspond to intrinsic variations within the class of functions considered. For $u\in H^1(\Omega,M)$, we call the space of test functions $H^1(\Omega, u^{-1}TM)$.
If we consider $M\subset \R^{n}$ as an embedded submanifold, then $H^1(\Omega,u^{-1}TM)$ can be canonically identified with a subset of $H^1 (\Omega,\R^{n})$.

We construct discrete test functions in the same manner, i.e., $\vfield{v}_{h}$ is a discrete test function for $u_{h}\in V_h(\Omega,M)$
if there exists a variation
$\gamma:\Omega\times(-\epsilon,\epsilon)\to M$ such that $\gamma(\cdot,t)\in V_h(\Omega,M)$ for all $t\in (-\epsilon,\epsilon)$,  $\gamma(\cdot,0)=u_{h}$, and $\frac{d}{dt}\gamma(\cdot ,0)=\vfield{v}_{h}$ \cite{hardering:2015,sander:2016}.
Writing this definition using the coefficients $(c_i)_{i \in I} \subset M$ that constitute $u_h$,
the set of all discrete test functions over the discrete function $u_h$ can be defined as
\begin{multline*}
W_{h}(\Omega, u_h^{-1}TM)
\colonequals \bigg\{\vfield{v}_{h} \in L^2(\Omega,\R^{n})\;:\; \exists (c_{i})_{i\in I}\subset C((-\epsilon,\epsilon),M)\ \textrm{s.t.}\\ \vfield{v}_{h}=\frac{d}{dt}\bigg|_{t=0}\mathcal{P}\bigg(\sum_{i\in I}c_{i}(t)\phi_{i}\bigg)\bigg\}.
\end{multline*}
Similar to the discrete functions themselves, discrete test functions can be constructed by
polynomial interpolation followed by pointwise projection, as by chain rule we have for any $\vfield{v}_h \in W_h$ and
$x\in \Omega$
\begin{align*}
\vfield{v}_{h}(x)&= P^{\prime}\bigg(\sum_{i\in I}c_{i}(0)\phi_{i}(x)\bigg)\bigg(\sum_{i\in I}c'_{i}(0)\phi_{i}(x)\bigg),
\end{align*}
and $P^\prime(y):T_y\R^{n}\to T_{P(y)}M$, the differential of the closest-point projection $P$,
is again a projection.
\begin{prop}
	Let $P: U_{\delta}\subset \R^{n}\to M$ be the closest-point projection onto a closed embedded $C^2$-submanifold
 $M\subset \R^n$.
 For any $y \in U_{\delta}$, the differential $P^\prime(y):T_y\R^{n}\to T_{P(y)}M$
 is the orthogonal projection onto the tangent space $T_{P(y)}M$,
 with the canonical interpretation of $T_{P(y)}M$ as a subspace of $\R^n$.
\end{prop}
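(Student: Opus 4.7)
The plan is to characterise $P'(y)$ by its image, its kernel, and its restriction to the image, and to combine these three pieces using the orthogonal decomposition $\R^n = T_pM \oplus N_pM$ at $p \colonequals P(y)$. Specifically, I aim to verify (i) $\operatorname{Im} P'(y) \subseteq T_pM$, (ii) $N_pM \subseteq \ker P'(y)$, and (iii) $P'(y)|_{T_pM} = \operatorname{Id}_{T_pM}$; together these three facts force $P'(y)$ to be the orthogonal projection onto $T_pM$.

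Step (i) is immediate from the chain rule, since $P$ takes values in $M$: for any $h \in \R^n$ the curve $t \mapsto P(y+th)$ lies in $M$, so its velocity $P'(y)[h]$ lies in $T_pM$. For (ii), the first-order optimality condition defining the closest-point projection yields $y - p \in N_pM$; hence, for $h \in N_pM$ and $|t|$ small, $(y+th) - p = (y-p) + th$ still lies in $N_pM$, so $y + th$ remains in the affine normal slice $p + N_pM$. Under the tubular diffeomorphism $\Phi(q,w) = q + w$ identifying a neighborhood of the zero section of $NM$ with $U_\delta$, this slice is a single fibre of $P$, so $P(y+th) = p$ is constant in $t$ and $P'(y)[h] = 0$.

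For (iii), the base case $y = p \in M$ is direct: given $h \in T_pM$, choose a smooth curve $\gamma \subset M$ with $\gamma(0) = p$ and $\gamma'(0) = h$; the identity $P(\gamma(t)) = \gamma(t)$ then yields $P'(p)[h] = h$. I expect the main obstacle to be extending this identity to general $y \in U_\delta$, where $t \mapsto y + th$ with $h \in T_pM$ no longer traces a curve in $M$. The planned remedy is to write $y = \Phi(p,v)$ in tubular coordinates and differentiate the factorisation $P = \pi \circ \Phi^{-1}$, with $\pi \colon NM \to M$ the bundle projection; the $C^2$-regularity of $M$ provides the smoothness of $\Phi$ required to justify this computation and to transport the base-case identity on $T_pM$ to arbitrary $y \in U_\delta$.
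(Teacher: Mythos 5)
Your reduction of the statement to the three facts (i) $\operatorname{Im}P'(y)\subseteq T_pM$, (ii) $N_pM\subseteq\ker P'(y)$, (iii) $P'(y)|_{T_pM}=\operatorname{Id}_{T_pM}$ is logically sound, and your arguments for (i), (ii) and for the base case $y=p\in M$ of (iii) are correct: (i) is the chain rule, (ii) holds because inside the tubular neighborhood the affine normal slice $p+N_pM$ is a single fibre of $P$, and on $M$ itself differentiating $P\circ\gamma=\gamma$ along curves in $M$ gives (iii). So your scheme does prove the proposition at points $y\in M$.

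The step you defer, however, is not merely the main obstacle; it cannot be closed. For $y\in U_\delta\setminus M$ the identity $P'(y)|_{T_pM}=\operatorname{Id}_{T_pM}$ is false whenever the second fundamental form is nonzero: carrying out your own plan and differentiating $P=\pi\circ\Phi^{-1}$ in tubular coordinates, the Weingarten formula gives $P'(y)=(\operatorname{Id}-S_{v})^{-1}\circ\Pi_{T_pM}$ with $v=y-P(y)$ and $S_v$ the shape operator at $p$ in the direction $v$, which is the orthogonal projection only when $S_v=0$. A concrete check is $M=S^1\subset\R^2$, where $P(y)=y/\|y\|$ and $P'(y)=\frac{1}{\|y\|}(\operatorname{Id}-\hat{y}\hat{y}^T)$, i.e.\ $\frac{1}{\|y\|}$ times the orthogonal projection. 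For comparison, the paper argues differently: it differentiates the first-order optimality condition $\langle P(y+t\xi)-(y+t\xi),w(t)\rangle=0$ along a tangent field $w$ and then discards the term $\langle P(y)-y,w'(0)\rangle$; by the Gauss formula this term equals a pairing of $P(y)-y$ with the second fundamental form and vanishes exactly when $y\in M$ — the same curvature correction that blocks your step (iii). So the exact statement should be read at points of $M$ (which your argument establishes), while off $M$ one only has $P'(y)=\Pi_{T_{P(y)}M}+O(\|y-P(y)\|)$, the quantitative version that the rest of the paper actually relies on, cf.\ the estimate \eqref{eq::P1est}.
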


\begin{proof}
	Let $y\in U_{\delta}$.
	We need to show that for all $\xi\in T_{y}\R^{n}$ and $\omega \in T_{P(y)}M \subset \R^{n}$
	\begin{align*}
	 \langle P^{\prime}(y)(\xi) - \xi, \omega\rangle= 0
	\end{align*}
	holds. To see this, we consider the curve $c:(-\epsilon,\epsilon)\to M$ defined by $c(t)=P(y+t\xi)$,  and a vector field $w(t)$ along $c$ with $w(0)=\omega$.
	As $P$ is defined by minimization, the first variation yields at any $t\in (-\epsilon,\epsilon)$
	\begin{align*}
	\langle P(y+t\xi) - (y+t\xi), w(t)\rangle =0.
	\end{align*}
	Differentiating this with respect to $t$ yields
	\begin{align*}
	0&= \frac{d}{dt}\bigg|_{t=0}\langle P(y+t\xi) - (y+t\xi), w(t)\rangle\\
	&=\langle P^{\prime}(y)(\xi) - \xi, \omega\rangle + \langle P(y) - y, w'(0)\rangle\\
	&= \langle P^{\prime}(y)(\xi) - \xi, \omega\rangle. \qedhere
	\end{align*}
\end{proof}

Thus, the computation of the value at $x \in \Omega$ of a test functions along a discrete function $u_{h}\in V_h(\Omega,M)$ corresponds to
first interpolating given tangent vectors $c'_{i}\in T_{u_{h}(\xi_{i})}M\subset \R^{n}$ in $\R^{n}$,
and then projecting the resulting piecewise polynomial function pointwise orthogonally to $T_{u_{h}(x)}M$.
Alternatively, by linearity we can first project the $c'_{i}$ orthogonally to $T_{u_{h}(x)}M$, and then interpolate the result in the vector space $T_{u_{h}(x)}M$.

In particular, we can define for $u_{h}=\mathcal{P}\left(\sum_{i\in I}u_{i}\phi_{i}\right)\in V_{h}(\Omega,M)$
an interpolation operator $Q_{u_{h}^{-1}TM}:C(\Omega,u_{h}^{-1}TM)\to W_{h}(\Omega,u_{h}^{-1}TM)$ by
\begin{align*}
Q_{u_{h}^{-1}TM}\vfield{v}=\mathcal{P}_{u_{h}^{-1}TM} \circ Q_{\R^{n}}(\vfield{v}).
\end{align*}
Given some test function $\vfield{v}$ along a continuous function $u$, i.e. $(u,\vfield{v})\in C(\Omega,TM)$,
we can first interpolate $u$ and then $\vfield{v}$, as the interpolation of $\vfield{v}$ depends only on the
values at the Lagrange nodes $(\xi_i)_{i \in I}$, where $u$ and $u_{h}$ agree.

\section{Interpolation error estimates}\label{sec::errest}

In this chapter we will estimate the interpolation errors of $Q_{\R^n}$ and $Q_{M}$ in terms of the mesh width $h$.
We also estimate the error of the test vector field interpolation operator $Q_{u_h^{-1}TM}$.

\subsection{Properties of Euclidean interpolation}

Proving interpolation error bounds for $Q_M$ uses several standard results for interpolation
in Euclidean spaces.  We repeat some of them here for convenience.

Define the usual grid dependent Sobolev norms
\begin{align*}
\|v\|_{W^{l,p}(\mathcal{G})}\colonequals \left(\sum_{T\subset \mathcal{G}}\|v\|_{W^{l,p}(T)}^{p}\right)^{\frac{1}{p}}
\end{align*}
for functions $v\in C(\Omega)$ such that $v_{|T}\in W^{l,p}(T)$ for all $T\subset \mathcal{G}$.
For the rest of this paper, this norm is meant whenever we speak of the $W^{l,p}$-norm of a
discrete function, unless explicitly stated otherwise.
As we assume shape regularity of the mesh, one can use the Sobolev embedding theorem and elementwise scaling to the reference element to prove that if $lp>s$, $Q_{\R^n}$ is continuous with respect to the grid-dependent $W^{l,p}$-norm, i.e., there exists $C>0$ such that we have
\begin{align}\label{equ::loc_lip}
\left\|Q_{\R^{n}} v\right\|_{W^{l,p}(\mathcal{G})}
\leq C \left\|v\right\|_{W^{l,p}(\Omega)} \text{ for all } v\in W^{l,p}(\Omega).
\end{align}
Note that by the Sobolev embedding theorem, $Q_{\R^{n}} v$ is well-defined for all $v\in W^{l,p}$ with $lp>s$.
Under these assumptions, we have the following approximation error estimate for $Q_{\R^{n}}$ \cite{ciarlet, braess}.
\begin{theorem}\label{thm::lin}
Let $\Omega\subset \R^s$ be a bounded Lipschitz domain, $\mathcal{G}$ a shape-regular, affine-equivalent mesh on $\Omega$, $l\in \N$, $p\in [1,\infty]$
and $(\phi_{i})_{i\in I}$ Lagrangian nodal basis functions for polynomial order $r\geq l-1$.
Then on each element $T$ for any $m\in \N$ with $m\geq l$ and $\min(m,r+1)>\frac{s}{p}$ we have
\begin{align*}
\left|v-Q_{\R^n}v\right|_{W^{l,p}(T,\R^{n})}\leq C\; h_{T}^{\min(m,r+1)-l}|v|_{W^{\min(m,r+1),p}(T,\R^{n})} \quad \forall v\in W^{m,p}(T,\R^n),
\end{align*}
with the constant independent of $v$ and $h_{T}=\mathrm{diam}(T)$.
\end{theorem}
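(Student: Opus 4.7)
The plan is to carry out the classical Bramble--Hilbert/affine-equivalence argument that yields this estimate in any standard finite element text (Ciarlet, Braess), specialising to a single element $T$. I would set $k \colonequals \min(m,r+1)$, note that $kp>s$ by hypothesis so $Q_{\R^n}$ is well-defined on $W^{k,p}$ by Sobolev embedding, and that $Q_{\R^n}$ reproduces polynomials of degree up to $r$, hence in particular of degree up to $k-1$. Working componentwise in $\R^n$, it suffices to prove the scalar estimate.

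First I would pull back to the reference element $\hat T$: by affine equivalence there exists an affine bijection $F_T\colon \hat T \to T$ with $\|DF_T\|\lesssim h_T$, $\|DF_T^{-1}\|\lesssim h_T^{-1}$ and $|\det DF_T|\sim h_T^s$ (the hidden constants depending only on the shape-regularity constant). Setting $\hat v \colonequals v\circ F_T$, the Lagrangian structure gives $\widehat{Q_{\R^n}v}=\hat Q\hat v$, where $\hat Q$ is the reference interpolation operator. The standard chain-rule scaling lemma for affine maps then yields, for any $j$,
\begin{equation*}
 |\hat w|_{W^{j,p}(\hat T)} \;\lesssim\; h_T^{\,j-s/p}\,|w|_{W^{j,p}(T)},
 \qquad
 |w|_{W^{j,p}(T)} \;\lesssim\; h_T^{\,s/p-j}\,|\hat w|_{W^{j,p}(\hat T)}.
\end{equation*}

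Next I would establish the reference-element bound
\begin{equation*}
 |\hat v - \hat Q \hat v|_{W^{l,p}(\hat T)}
 \;\leq\; \hat C\,|\hat v|_{W^{k,p}(\hat T)}.
\end{equation*}
The operator $I-\hat Q\colon W^{k,p}(\hat T)\to W^{l,p}(\hat T)$ is bounded, since the identity is trivially bounded (using $k\ge l$) and $\hat Q$ is bounded by the reference-element version of~\eqref{equ::loc_lip} (this is the step that uses $kp>s$). Because $\hat Q$ preserves polynomials of degree $\le r\ge k-1$, the operator $I-\hat Q$ annihilates $P_{k-1}$. The Bramble--Hilbert lemma then gives
\begin{equation*}
 |\hat v - \hat Q\hat v|_{W^{l,p}(\hat T)}
 \;\leq\; \hat C\, \inf_{q\in P_{k-1}} \|\hat v - q\|_{W^{k,p}(\hat T)}
 \;\leq\; \hat C\, |\hat v|_{W^{k,p}(\hat T)},
\end{equation*}
with $\hat C$ depending only on $\hat T$, $l$, $p$, $r$.

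Finally I would scale back, combining the two estimates in the scaling lemma with the reference bound to obtain
\begin{equation*}
 |v - Q_{\R^n}v|_{W^{l,p}(T)}
 \;\lesssim\; h_T^{\,s/p-l}\,|\hat v - \hat Q\hat v|_{W^{l,p}(\hat T)}
 \;\lesssim\; h_T^{\,s/p-l}\,|\hat v|_{W^{k,p}(\hat T)}
 \;\lesssim\; h_T^{\,k-l}\,|v|_{W^{k,p}(T)},
\end{equation*}
which is the claimed estimate with $k=\min(m,r+1)$. Shape regularity ensures that the constant is independent of $T$. I do not anticipate a genuine obstacle: the only delicate point is verifying the Sobolev embedding needed to make $\hat Q$ bounded from $W^{k,p}$ to $W^{l,p}$ on $\hat T$, which is precisely why the hypothesis $\min(m,r+1)>s/p$ is imposed.
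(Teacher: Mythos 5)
Your argument is correct and is exactly the classical affine-scaling/Bramble--Hilbert proof from Ciarlet and Braess, which is precisely what the paper invokes here (it does not reprove the result but cites it). Nothing to add: the one delicate hypothesis, $\min(m,r+1)>s/p$ for the Sobolev embedding that makes the reference interpolation operator bounded, is handled as in the standard references.
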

We will also need the following inverse inequalities.
\begin{theorem}\label{thm:inverLin}
	Consider a shape-regular, affine-equivalent, quasi-uniform mesh $\mathcal{G}$ and two pairs $(k,p)$ and $(m, q)$ with $1\leq k\leq m \leq \infty$ and $p, q\in  [1,\infty]$ such that the space of polynomials up to degree $r$ on $T$ is a subspace of $W^{k,p}(T)\subset W^{m,q}(T)$ for each mesh element $T\subset  \mathcal{G}$.
	Then for all discrete functions $Q_{\R^{n}}v$ of polynomial order $r$
	\begin{align*}
	|Q_{\R^{n}}v|_{W^{m,q}(T)}\leq C\; h^{-(m-k) - s\max\big\{0,\frac{1}{p}-\frac{1}{q}\big\}}|Q_{\R^{n}}v|_{W^{k,p}(T)},
	\end{align*}
	where the constant depends on the quasi-uniformity and regularity parameters of the mesh, but not on $h$.
\end{theorem}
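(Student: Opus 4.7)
The plan is to reduce everything to a fixed reference element by the affine change of variables, exploit finite-dimensionality of the polynomial space on the reference element to get an $h$-independent bound there, and then scale back.

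First I would fix a mesh element $T \in \mathcal{G}$ and write the affine equivalence $F_T(\hat{x}) = B_T\hat{x} + b_T$ mapping the reference element $\hat{T}$ onto $T$. Shape-regularity and quasi-uniformity of $\mathcal{G}$ give the standard bounds $\|B_T\| \leq Ch$, $\|B_T^{-1}\| \leq Ch^{-1}$, and $|\det B_T| \sim h^s$. Setting $\hat{p} := (Q_{\R^n}v) \circ F_T$, the function $\hat{p}$ lies in the space $\mathcal{P}_r(\hat{T})$ of polynomials of degree at most $r$ on $\hat{T}$, since $Q_{\R^n}v$ is polynomial of degree $\leq r$ on $T$ by assumption. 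The usual chain-rule and change-of-variables computation then yields the two-sided scaling
\begin{align*}
|Q_{\R^n}v|_{W^{m,q}(T)} &\leq C\, h^{-m + s/q}\, |\hat{p}|_{W^{m,q}(\hat{T})}, \\
|\hat{p}|_{W^{k,p}(\hat{T})} &\leq C\, h^{k - s/p}\, |Q_{\R^n}v|_{W^{k,p}(T)}.
\end{align*}

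The heart of the argument is the $h$-independent reference estimate
\begin{equation*}
|\hat{p}|_{W^{m,q}(\hat{T})} \leq C\, |\hat{p}|_{W^{k,p}(\hat{T})} \qquad \text{for all } \hat{p} \in \mathcal{P}_r(\hat{T}).
\end{equation*}
This is where I expect the only real subtlety. The space $\mathcal{P}_r(\hat{T})$ is finite-dimensional, so all (semi)norms that are actually norms on $\mathcal{P}_r(\hat{T})$ are equivalent. The seminorm $|\cdot|_{W^{k,p}(\hat{T})}$ has null space equal to polynomials of degree at most $k-1$ (taking this to be $\{0\}$ when $k = 0$), while $|\cdot|_{W^{m,q}(\hat{T})}$ vanishes on the larger space of polynomials of degree at most $m-1$ since $m \geq k$. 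Passing to the quotient $\mathcal{P}_r(\hat{T}) / \mathcal{P}_{k-1}(\hat{T})$, the right-hand side becomes a genuine norm, the left-hand side a well-defined seminorm, and the continuous inclusion follows from finite-dimensionality.

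Combining the three displays gives
\begin{equation*}
|Q_{\R^n}v|_{W^{m,q}(T)} \leq C\, h^{-(m-k) + s(1/q - 1/p)}\, |Q_{\R^n}v|_{W^{k,p}(T)}.
\end{equation*}
To finish, I distinguish two cases. If $1/p \geq 1/q$, the claimed exponent of $h$ is $-(m-k)$, and the extra factor $h^{s(1/q - 1/p)}$ in my bound carries a non-positive exponent in the case $p \leq q$... here I need to re-examine: when $p \leq q$, $1/q - 1/p \leq 0$, so $s(1/q - 1/p) = -s(1/p - 1/q) = -s\max\{0,1/p-1/q\}$, which is exactly the stated exponent. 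When $p \geq q$, the extra factor has non-negative exponent and, under the standing assumption $h \leq h_0$, is bounded by a constant; absorbing it into $C$ recovers the stated bound with $\max\{0, 1/p - 1/q\} = 0$. All constants depend only on $\hat{T}$, $r$, $m$, $k$, $p$, $q$, and the shape-regularity and quasi-uniformity constants of $\mathcal{G}$, as claimed.
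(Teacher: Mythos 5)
Your proof is correct and is exactly the standard argument (affine scaling to the reference element, boundedness of the $W^{m,q}$-seminorm by the $W^{k,p}$-seminorm on the finite-dimensional polynomial space via the quotient by $\mathcal{P}_{k-1}$, then scaling back and absorbing a non-negative power of $h$ when $p\geq q$), which is precisely the classical proof the paper relies on by citing the standard finite element literature rather than proving the theorem itself. The only blemish is the garbled opening sentence of your final case analysis, which you then correct; the corrected case distinction matches the stated exponent $-(m-k)-s\max\{0,\tfrac{1}{p}-\tfrac{1}{q}\}$.
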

This result here is not as general
as it could be. Inverse inequalities with weaker requirements of the mesh appear, e.g., in \cite{dahmen2004inverse} (no quasi-uniformity), and \cite{graham2005finite} (no shape regularity).
We expect that these generalizations can help to extend the following results on $M$-valued interpolation as well.

\subsection{$M$-valued interpolation}
We now turn to error bounds for the $M$-valued interpolation operator $Q_M \colonequals \mathcal{P} \circ Q_{\R^n}$.
Given  $v\in W^{m,p}(\Omega,M)$, we estimate the error $\|Q_{M}v-v\|_{W^{l,p}(\mathcal{G})}$, $l< \min(m,r+1)$, by observing that $ Q_{\R^{n}}(Q_{M}v) = Q_{\R^{n}}v$, and using the triangle inequality
\begin{align*}
\|Q_{M}v-v\|_{W^{l,p}}
&\leq \|Q_{M}v-Q_{\R^{n}}(Q_{M}v)\|_{W^{l,p}}+ \|Q_{\R^{n}}v-v\|_{W^{l,p}},
\end{align*}
(again in the grid-dependent norm).
Denoting $\hat{m} \colonequals \min(m,r+1)$,
both terms on the right can be bounded using Theorem~\ref{thm::lin}, and we obtain
\begin{align*}
\norm{Q_{M}v-v}_{W^{l,p}}
&\leq C\;  h^{\hat{m}-l}(|Q_M v|_{W^{\hat{m},p}} + |v|_{W^{\hat{m},p}}).
\end{align*}
It remains to show estimates of $|Q_{M}v|_{W^{\hat{m},p}}$
in terms of Sobolev norms of $v$.
Unlike in the Euclidean case the Sobolev semi-norm $|v|_{W^{\hat{m},p}}$ is not by itself sufficient
to bound $|Q_{M}v|_{W^{\hat{m},p}}$, because lower-order derivatives appear by the chain rule.
The proper quantity is the homogeneous norm $|\cdot|_{W^{\hat{m},p}} +|\cdot|_{W^{1,\hat{m}p}}^{\hat{m}}$, known, e.g., from \cite{superposVector}.
It replaces the unwieldy smoothness descriptor used in corresponding results for geodesic
finite elements~\cite{grohs_hardering_sander:2015}.

\begin{prop}\label{prop:QMest}
 Let $\hat m\geq 1$, $p\in[1,\infty]$, and $M$ such that the closest-point projection $P$
 is in $W^{\hat{m}+1,\infty}$ in some $\delta$-neighborhood $U_{\delta} \subset \R^{n}$ of $M$.
 Let $Q_{\R^{n}}v$ be a discrete function from a Lipschitz domain $T\subset \R^{s}$ into $\R^{n}$
 defined by interpolation of values $(c_{i})_{i\in I}\subset M$. Suppose the $(c_{i})_{i\in I}$ are
 contained in a geodesic ball $B_\rho$ of radius $\rho$, where $\rho$ is small enough such that $Q_{\R^{n}}v(T)\subset U_{\delta}$.
	Then
	\begin{multline}
	\label{eq:bound_on_qmv}
	|Q_{M}v|_{W^{\hat{m},p}}\leq |Q_{\R^{n}}v|_{W^{\hat{m},p}}\\
	 +C\rho L(P)\left(|Q_{\R^{n}}v|_{W^{\hat{m},p}}+ \max\left\{|Q_{\R^{n}}v|_{W^{1,\hat{m}p}}^{\hat{m}},|Q_{\R^{n}}v|_{W^{1,\hat{m}p}} \right\}\right),
	\end{multline}
	where $L(P)$ is a constant that depends on the $W^{\hat{m}+1,\infty}$-norm of $P$.
\end{prop}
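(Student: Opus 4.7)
The plan is to decompose $Q_M v = P \circ Q_{\R^n} v$ by Taylor-expanding the projection $P$ around a manifold point $y_0 \in B_\rho \cap M$ (conveniently $y_0 = c_{i_0}$), and then to control the $\hat m$-th derivative of the composition via Faà di Bruno's formula combined with Hölder's inequality. Throughout, the crucial quantitative input is the partition-of-unity identity
\[
Q_{\R^n}v(x) - y_0 = \sum_{i \in I} \phi_i(x)(c_i - y_0),
\]
which gives the pointwise bound $\|Q_{\R^n}v - y_0\|_{L^\infty(T)} \leq C\rho$.

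Writing $P(q) = P(y_0) + P'(y_0)(q - y_0) + R(q)$ with $R$ the Taylor remainder, substituting $q = Q_{\R^n}v(x)$, and differentiating $\hat m$ times in $x$ yields
\[
D^{\hat m}(Q_M v) = P'(y_0)\, D^{\hat m} Q_{\R^n} v + D^{\hat m}(R \circ Q_{\R^n} v).
\]
The first summand produces the leading term with constant $1$: by the immediately preceding proposition, $P'(y_0)$ is the orthogonal projection onto $T_{y_0}M$ and hence has operator norm at most $1$, so $\|P'(y_0) D^{\hat m}Q_{\R^n}v\|_{L^p(T)} \leq |Q_{\R^n}v|_{W^{\hat m,p}(T)}$.

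For the remainder, Faà di Bruno expresses $D^{\hat m}(R \circ Q_{\R^n}v)$ as a finite sum of expressions of the form
\[
R^{(j)}(Q_{\R^n}v) \cdot \bigl(D^{a_1} Q_{\R^n}v \otimes \cdots \otimes D^{a_j} Q_{\R^n}v\bigr), \qquad a_1 + \cdots + a_j = \hat m, \quad 1 \leq j \leq \hat m.
\]
Since $R(y_0) = R'(y_0) = 0$ and $Q_{\R^n}v$ stays within $C\rho$ of $y_0$, a Taylor estimate of $R$ on the ball $B_{C\rho}(y_0)$ yields $\|R^{(j)}(Q_{\R^n}v)\|_{L^\infty(T)} \leq C\rho^{\max(1,\hat m+1-j)} L(P)$ provided one works with a Taylor polynomial of $P$ of order $\hat m$; this is exactly where the $W^{\hat m + 1, \infty}$-regularity hypothesis on $P$ enters. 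Every Faà di Bruno contribution thus acquires at least one factor of $\rho L(P)$. Hölder's inequality with exponents $\hat m p$ on each first-order derivative of $Q_{\R^n}v$ then distributes the $L^p$-norm: the unique all-ones partition $(1,\ldots,1)$ of $\hat m$, which only occurs when $j = \hat m$, yields the maximal power $|Q_{\R^n}v|_{W^{1,\hat m p}}^{\hat m}$, while every other partition contains a factor of order $\geq 2$ and is absorbed into $|Q_{\R^n}v|_{W^{\hat m, p}}$ times a lower power of $|Q_{\R^n}v|_{W^{1, \hat m p}}$, both controlled by the $\max$ on the right-hand side of the statement.

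The main obstacle is the Faà di Bruno bookkeeping: one must simultaneously guarantee that every error term acquires the $\rho L(P)$ prefactor and that the total power of $|Q_{\R^n}v|_{W^{1,\hat m p}}$ never exceeds $\hat m$. Matching the Taylor order of $P$ to the differentiation order $\hat m$ is essential, because a mere linear expansion would leave $\|R^{(j)}\|_{L^\infty}$ of order $L(P)$ for $j \geq 2$, losing the needed $\rho$-factor. Once the Taylor order is correctly chosen, the residual combinatorics reduce to summing over the fixed finite set of partitions of $\hat m$, each bounded by the right-hand side of the claim, and the constant $C$ depends only on $\hat m$, the shape-regularity of the mesh, and the basis $(\phi_i)_{i \in I}$.
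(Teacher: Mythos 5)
Your overall skeleton---Fa\`a di Bruno applied to $D^{\hat m}(P\circ Q_{\R^n}v)$, the pointwise closeness $\|Q_{\R^n}v-y_0\|_{L^\infty(T)}\leq C\rho$ obtained from the partition of unity, the fact that $P'$ has operator norm $1$ at points of $M$ for the leading term, and H\"older with exponents $\hat m p/|\vec a_i|$ for the products of derivatives---is the same as in the paper's proof. The genuine gap lies in the mechanism you propose for producing the factor $\rho$ in front of the higher-order terms, and it fails in either of the two readings of your own text. You write $P(q)=P(y_0)+P'(y_0)(q-y_0)+R(q)$, i.e.\ a first-order expansion, so that $R^{(j)}=P^{(j)}$ for every $j\geq 2$; these derivatives do \emph{not} vanish at points of $M$ (for the sphere, $P(q)=q/\|q\|$ gives $P''(p)[\xi,\xi]=-p$ for any unit tangent vector $\xi$ at $p\in S^{n-1}$), so $\|R^{(j)}(Q_{\R^n}v)\|_{L^\infty}$ is of size $\|P\|_{W^{j,\infty}}$ and not $C\rho^{\max(1,\hat m+1-j)}L(P)$. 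If instead, as you indicate when you insist that the Taylor order must match $\hat m$, you take $R$ to be the remainder of the degree-$\hat m$ Taylor polynomial, then the remainder bounds hold, but your displayed identity $D^{\hat m}(Q_Mv)=P'(y_0)D^{\hat m}Q_{\R^n}v+D^{\hat m}(R\circ Q_{\R^n}v)$ is false: the quadratic through degree-$\hat m$ polynomial terms generate, again by Fa\`a di Bruno, contributions of the form $P^{(k)}(y_0)\bigl[D^{\vec a_1}Q_{\R^n}v,\dots,D^{\vec a_k}Q_{\R^n}v\bigr]$, $2\leq k\leq \hat m$, which carry no smallness from $P^{(k)}(y_0)$ and are nowhere accounted for in your argument. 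Whichever Taylor order you choose, one half of the proof breaks; this is not a matter of bookkeeping over partitions of $\hat m$.

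For comparison, the paper does not expand about a single point at all: the smallness enters through the estimates \eqref{eq::P1est} and \eqref{eq::Pkest}, which bound $\mathcal P'(Q_{\R^n}v)$ and $\mathcal P^{(k)}(Q_{\R^n}v)$ in terms of the distance $d(Q_{\R^n}v,M)\leq C\rho$, i.e.\ by an expansion of $P$ and its derivatives transversally to $M$ rather than a Taylor expansion on the ball $B_{C\rho}(y_0)$; to repair your argument you would have to use such distance-to-$M$ estimates for $P^{(k)}$ (or else accept a bound without the factor $\rho$ in front of the $\max$ term). A second, smaller omission: after H\"older you are left with mixed products $\prod_{i}\|D^{\vec a_i}Q_{\R^n}v\|_{L^{\hat m p/|\vec a_i|}}$, while the right-hand side of \eqref{eq:bound_on_qmv} contains no mixed terms; the paper removes them via the Gagliardo--Nirenberg interpolation inequality followed by Young's inequality, a step you replace by the unsubstantiated claim that such terms are ``absorbed'' into the stated right-hand side.
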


\begin{proof}
Let $\vec{a}\in \N^s$ be a multi-index with $|\vec{a}|_1=\hat{m}$. By the chain rule, the derivative $D^{\vec{a}}Q_{M}v= D^{\vec{a}}(\mathcal{P}(Q_{\R^{n}}v))$ can be written almost everywhere as a sum of terms of the form
\begin{align*}
P^{(k)}(Q_{\R^{n}}v(x))\left[D^{\vec{a_1}} Q_{\R^{n}}v(x),...,D^{\vec{a_k}}Q_{\R^{n}}v(x)\right],
\end{align*}
where $1\leq k\leq \hat{m}$, $\vec{a_1},\dots,\vec{a_k}\in \N^s \backslash \{(0,\dots,0) \}$ and $\vec{a_1}+\dots+\vec{a_k}=\vec{a}$. An expansion of $P$ around $M$ yields
\begin{align}\label{eq::P1est}
	\|\mathcal {P}'(Q_{\R^{n}}v)\|_{L^{\infty}}\leq 1 + \Lip(P')d(Q_{\R^{n}}v,M)\leq 1 + C \Lip(P')\rho,
\end{align}
where $d(Q_{\R^{n}}v,M)\colonequals\sup_{x\in T}\inf_{z\in M}\|Q_{\R^{n}}v(x)-z\|_{\R^{n}}$, and $\Lip(P')$ denotes the Lipschitz constant of the map $P':\R^{2n}\to TM$.
For $k\geq 2$
\begin{align}\label{eq::Pkest}
	\|\mathcal {P}^{(k)}(Q_{\R^{n}}v)\|_{L^{\infty}}\leq L(P)d(Q_{\R^{n}}v,M)\leq C L(P)\rho,
\end{align}
where the constant $L$ depends on the $W^{k+1,\infty}$-norm of $P$.
Further, we have
\begin{multline*}
\Big\|P^{(k)}(Q_{\R^{n}}v(x))\left[D^{\vec{a_1}} Q_{\R^{n}}v(x),...,D^{\vec{a_k}}Q_{\R^{n}}v(x)\right] \Big\|_{L^{p}} \\
\leq \big\|\mathcal {P}^{(k)}(Q_{\R^{n}}v)\big\|_{L^{\infty}}\prod_{i=1}^{k}\big\|D^{\vec{a_i}} Q_{\R^{n}}v \big\|_{L^{\frac{\hat{m}p}{|\vec{a_i}|}}}.
\end{multline*}
For $k=1$, this yields
\begin{align*}
\|P'(Q_{\R^{n}}v(x))\left[D^{\vec{a}} Q_{\R^{n}}v(x)\right]\|_{L^{p}}
&\leq (1 + C\Lip(P')\rho)\|Q_{\R^{n}}v\|_{W^{\hat{m},p}}.
\end{align*}
For $k\geq 2$, we have by the Gagliardo--Nirenberg--Sobolev and Young's inequalities,
\begin{align*}
\|D^{\vec{a_i}} Q_{\R^{n}}v\|_{L^{\frac{\hat{m}p}{|\vec{a_i}|}}}
&\leq C |Q_{\R^{n}}v|_{W^{\hat{m},p}}^{\frac{|\vec{a_i}|-1}{\hat{m}-1}}|Q_{\R^{n}}v|_{W^{1,\hat{m}p}}^{1-\frac{|\vec{a_i}|-1}{\hat{m}-1}} + C|Q_{\R^{n}}v|_{W^{1,\hat{m}p}}\\
&\leq |Q_{\R^{n}}v|_{W^{\hat{m},p}}^{\frac{|\vec{a_i}|}{\hat{m}}} + C |Q_{\R^{n}}v|_{W^{1,\hat{m}p}}^{|\vec{a_i}|} + C|Q_{\R^{n}}v|_{W^{1,\hat{m}p}}.
\end{align*}
Combining all of this yields
\begin{align*}
	\|D^{\vec{a}}Q_{M}v\|_{L^{p}}
	&\leq \sum  \left\|P^{(k)}(Q_{\R^{n}}v(x))\left[D^{\vec{a_1}} Q_{\R^{n}}v(x),...,D^{\vec{a_k}}Q_{\R^{n}}v(x)\right]\right\|_{L^{p}}\\
	&\leq  (1 + C\Lip(P')\rho)	\|D^{\vec{a}}Q_{\R^{n}}v\|_{L^{p}}\\
	&\qquad +  C L(P)\rho \left(|Q_{\R^{n}}v|_{W^{\hat{m},p}}+ \max\left\{|Q_{\R^{n}}v|_{W^{1,\hat{m}p}}^{\hat{m}},|Q_{\R^{n}}v|_{W^{1,\hat{m}p}} \right\}\right).\qedhere
\end{align*}
\end{proof}
 If $m\geq r+1$, then the highest-order derivatives $D^{(\hat m)}Q_{\R^{n}}v$ vanish.
 In that case, \eqref{eq:bound_on_qmv} reduces to
	\begin{align*}
	|Q_{M}v|_{W^{\hat{m},p}}\leq C\;\rho L(P) \max\left\{|Q_{\R^{n}}v|_{W^{1,\hat{m}p}}^{\hat{m}},|Q_{\R^{n}}v|_{W^{1,\hat{m}p}} \right\}
	\end{align*}
 for the mesh-dependent norm.

We can now state the main theorem.

\begin{theorem}\label{thm::main_appr}
Consider the same setting as in Theorem~\ref{thm::lin}.
Let $M\subset \R^n$ be an embedded submanifold, such that the closest-point projection $P\colon U\subset \R^n\rightarrow M$ is in $W^{\hat{m},\infty}$, where $\hat{m}\colonequals \min(m,r+1)>\frac{s}{p}$.
Then there exists $\alpha>0$, depending on $m,p$, and $s$ such that for all $v\in W^{m,p}(\Omega,M)$
and $0\leq l\leq \hat{m}$
\begin{equation}
\label{eq:main_interpolation_bound}
\abs{v-Q_Mv}_{W^{l,p}}
\leq
C\;  h^{\hat{m}-l}\\
\Big[|v|_{W^{\hat{m},p}}+ \;L(P)h^{\alpha} \|v\|_{W^{\hat{m},p}}\left(\|v\|_{W^{\hat{m},p}}+ |v|_{W^{1,\hat{m}p}}^{\hat{m}}\right) \Big],
\end{equation}
with $L(P)$ as in Proposition~\ref{prop:QMest}, and the constant $C$ depending on the constant in that proposition, the one in Theorem~\ref{thm::lin}, as well as $m$, $p$, and $\Omega$.
\end{theorem}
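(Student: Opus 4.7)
The strategy is the one foreshadowed just before the theorem statement: use the identity $Q_{\R^n}\circ Q_M=Q_{\R^n}$ to reduce everything to the linear estimates of Theorem~\ref{thm::lin} and the chain-rule bound of Proposition~\ref{prop:QMest}. On each element $T\in\mathcal G$ I would write
\begin{align*}
\|Q_M v - v\|_{W^{l,p}(T)}
\le \|Q_M v - Q_{\R^n}(Q_M v)\|_{W^{l,p}(T)} + \|Q_{\R^n}v-v\|_{W^{l,p}(T)}.
\end{align*}
The second term is directly bounded by $C h_T^{\hat m-l}|v|_{W^{\hat m,p}(T)}$ by Theorem~\ref{thm::lin}, yielding the ``leading'' part of the final estimate. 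For the first term I apply Theorem~\ref{thm::lin} now to the piecewise-smooth function $Q_Mv$, obtaining $C h_T^{\hat m-l}|Q_Mv|_{W^{\hat m,p}(T)}$.

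Second, I would control $|Q_Mv|_{W^{\hat m,p}(T)}$ via Proposition~\ref{prop:QMest}, bounding it by
\begin{align*}
C\bigl(|Q_{\R^n}v|_{W^{\hat m,p}(T)} + \rho_T L(P)\max\{|Q_{\R^n}v|_{W^{1,\hat m p}(T)}^{\hat m},|Q_{\R^n}v|_{W^{1,\hat m p}(T)}\}\bigr),
\end{align*}
where $\rho_T$ is the geodesic diameter of $v(T)$. The Euclidean-interpolant norms are then transferred back to norms of $v$ using the stability estimate~\eqref{equ::loc_lip}, which is applicable for both exponent pairs $(\hat m,p)$ and $(1,\hat m p)$ once $\hat m p>s$; in addition, the embedding $W^{\hat m,p}\hookrightarrow W^{1,\hat m p}$ (valid in the same regime by Gagliardo--Nirenberg--Sobolev) ensures $|v|_{W^{1,\hat m p}}$ is finite.

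The factor $h^\alpha$ arises from bounding $\rho_T$ by a Hölder oscillation estimate. Since $\hat m p>s$, Sobolev embedding gives $W^{\hat m,p}(\Omega)\hookrightarrow C^{0,\sigma}(\Omega)$ for some $\sigma>0$ depending only on $m,p,s$, whence $\operatorname{diam}(v(T))\le C h_T^\sigma\|v\|_{W^{\hat m,p}}$ in the ambient Euclidean distance; local equivalence of the intrinsic and extrinsic metrics on the smooth submanifold $M$ transports the same bound to the geodesic diameter, so $\alpha=\sigma$. This is also where the full norm $\|v\|_{W^{\hat m,p}}$ (as opposed to the seminorm) appears in the statement.

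Finally I assemble the elementwise bounds in $\ell^p$, using that $\sum_T|Q_{\R^n}v|_{W^{1,\hat m p}(T)}^{\hat m p}=|Q_{\R^n}v|_{W^{1,\hat m p}(\Omega)}^{\hat m p}$ so that the $\hat m$-th power factors out cleanly and produces $|v|_{W^{1,\hat m p}}^{\hat m}$ in the final expression. I expect the main technical annoyance to be handling the $\max\{A^{\hat m},A\}$ term uniformly under the $\ell^p$-summation (splitting via $\max\{A^{\hat m},A\}\le A+A^{\hat m}$ and tracking each summand separately), and verifying that the three distinct uses of the condition $\hat m p>s$ — continuity of $Q_{\R^n}$ in $W^{1,\hat m p}$, the embedding $W^{\hat m,p}\hookrightarrow W^{1,\hat m p}$, and the Hölder bound on $\rho_T$ — all line up to give a single exponent $\alpha>0$. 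The rest is element-to-global bookkeeping of Sobolev norms.
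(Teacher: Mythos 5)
Your proposal is correct and follows essentially the same route as the paper's proof: the splitting via $Q_{\R^n}\circ Q_M=Q_{\R^n}$, Theorem~\ref{thm::lin} applied to both terms, Proposition~\ref{prop:QMest} for $|Q_Mv|_{W^{\hat m,p}}$, transfer of the interpolant norms back to $v$, and the Hölder/Sobolev-embedding bound $\rho\leq C\|v\|_{W^{\hat m,p}}h^{\alpha}$ producing the $h^{\alpha}$ factor and the full norm. The only cosmetic deviation is that the paper treats $l=0$ separately by a direct Lipschitz-type estimate on $\mathcal P$ along the segment from $v$ to $Q_{\R^n}v$, while your uniform triangle-inequality argument also covers that case within the claimed bound.
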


\begin{proof}
For $l=0$, we have, using~\eqref{eq::P1est}
and Theorem~\ref{thm::lin},
\begin{align*}
\|Q_{M}v-v\|_{L^{p}}&\leq \int_{0}^{1}\|\mathcal{P}'(v+t(Q_{\R^{n}}v-v))(Q_{\R^{n}}v-v)\|_{L^{p}}\;dt\\
&\leq (1+C\;\Lip(P')\rho) \|Q_{\R^{n}}v-v\|_{L^{p}}\\
&\leq C\: (1+\;\Lip(P')\rho) h^{ \hat{m}}|v|_{W^{\hat{m},p}}.
\end{align*}
For $l\geq 1$, we use $Q_{\R^n}\circ Q_M = Q_{\R^n}$, Theorem~\ref{thm::lin}, and Proposition~\ref{prop:QMest} to estimate
\begin{align*}
\|Q_{M}v-v\|_{W^{l,p}}
&\leq \|Q_{M}v-Q_{\R^{n}}(Q_{M}v)\|_{W^{l,p}}+ \|Q_{\R^{n}}v-v\|_{W^{l,p}}\\
&\leq C\;  h^{\hat{m}-l}\left(|v|_{W^{\hat{m},p}} + |Q_{M}v|_{W^{\hat{m},p}}\right)\\
&\leq C\;  h^{\hat{m}-l}\big(|v|_{W^{\hat{m},p}} + |Q_{\R^{n}}v|_{W^{\hat{m},p}} \\
&\quad  +C\rho L(P)\left(|Q_{\R^{n}}v|_{W^{\hat{m},p}}+ \max\left\{|Q_{\R^{n}}v|_{W^{1,\hat{m}p}}^{\hat{m}},|Q_{\R^{n}}v|_{W^{1,\hat{m}p}} \right\}\right)\big).
\end{align*}
If $\{\xi_{i,T_{h}}\}_{i}$ denote the Lagrangian interpolation nodes in an element $T_{h}$, we have by the Sobolev embedding theorem for some $\alpha>0$
\begin{align*}
	\rho\leq \max_{T_{h}}\max_{i,j}d(v(\xi_{i,T_{h}}), v(\xi_{j,T_{h}}))\leq C\;\|v\|_{C^{0,\alpha}}|\xi_{i,T_{h}}-\xi_{j,T_{h}}|^{\alpha}\leq C\;\|v\|_{W^{\hat{m},p}}h^{\alpha}.
\end{align*}
By Theorem~\ref{thm::lin}, we can estimate all arising semi-norms of $Q_{\R^{n}}v$ by corresponding semi-norms of $v$.
Further, we have by the Sobolev embedding theorem $|v|_{W^{1,\hat{m}p}}\leq C\; \|v\|_{W^{\hat{m},p}}$. 
This yields the assertion.
\end{proof}
Note that the constants of our estimates are all independent of $M$. The only dependence
on $M$ is the factor $L(P)$. However, since $L(P)$ appears in the error bound~\eqref{eq:main_interpolation_bound}
only multiplied with $h^\alpha$, $\alpha > 0$, it becomes irrelevant for $h \to 0$.
The bounds are therefore optimal in terms of the mesh width.
Extra terms compared to the linear result can be controlled by the closeness parameter $\rho$ of the interpolation nodes, and thus for continuous functions by the mesh width parameter~$h$.

We have seen that, due to the chain rule, estimates on $Q_{M}$ obtained from the ones on $Q_{\R^{n}}$ are always
with respect to the homogeneous Sobolev seminorms of the type $|\cdot|_{W^{m,q}} +|\cdot|_{W^{1,mq}}^{m}$.
As the term $|\cdot|_{W^{1,mq}}^{m}$
does not scale correctly, we cannot expect general inverse estimates in the style of Theorem~\ref{thm:inverLin} for $Q_{M}$. An exception is the special case $m=1$.

\begin{theorem}\label{thm:inverse}
	Let the assumptions of Theorem~\ref{thm:inverLin} be fulfilled with $k=m=1$ and $p>s$. Then for all projected finite element functions $Q_{M}v$ we have
	\begin{align*}
		|Q_{M}v|_{W^{1,q}(T)}\leq C\; h^{- s\max\{0,\frac{1}{p}-\frac{1}{q}\}}|Q_{M}v|_{W^{1,p}(T)}.
	\end{align*}
\end{theorem}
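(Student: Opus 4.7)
The plan is to sandwich $|Q_M v|_{W^{1,q}(T)}$ between the corresponding quantities for the Euclidean polynomial $Q_{\R^n} v$, applying Theorem~\ref{thm:inverLin} in the middle. Concretely, I want to chain
\begin{equation*}
|Q_M v|_{W^{1,q}(T)} \leq C\,|Q_{\R^n} v|_{W^{1,q}(T)} \leq C\, h^{-s\max\{0,\frac{1}{p}-\frac{1}{q}\}} |Q_{\R^n} v|_{W^{1,p}(T)} \leq C\, h^{-s\max\{0,\frac{1}{p}-\frac{1}{q}\}} |Q_M v|_{W^{1,p}(T)}.
\end{equation*}

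The first inequality follows immediately from the chain rule $\nabla Q_M v(x) = P'(Q_{\R^n} v(x))\,\nabla Q_{\R^n} v(x)$ combined with the pointwise bound $\|P'\|_{L^\infty} \leq 1 + C\,\Lip(P')\,\rho$ established in \eqref{eq::P1est}. The middle inequality is just Theorem~\ref{thm:inverLin} applied to the polynomial $Q_{\R^n} v$ with $k = m = 1$, so nothing new is needed there.

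The main obstacle is the third inequality, because $\mathcal{P}$ is not invertible and the chain-rule argument cannot be reversed: the components of $\nabla Q_{\R^n} v$ normal to $M$ are annihilated by $P'$. My plan is to exploit instead the identity $Q_{\R^n}(Q_M v) = Q_{\R^n} v$, which holds because $Q_M v$ and $Q_{\R^n} v$ share the nodal values $c_i \in M$. Picking any node $\xi_1 \in T$ and setting $c \colonequals Q_M v(\xi_1) \in \R^n$, linearity of $Q_{\R^n}$ and translation invariance of the $W^{1,p}$-seminorm give
\begin{equation*}
|Q_{\R^n} v|_{W^{1,p}(T)} = |Q_{\R^n}(Q_M v - c)|_{W^{1,p}(T)} \leq \|Q_{\R^n}(Q_M v - c)\|_{W^{1,p}(T)}.
\end{equation*}
The local form of \eqref{equ::loc_lip}, valid because $p>s$ makes nodal evaluation continuous and because Lagrange interpolation on $T$ depends only on nodes in $T$, then controls this by $C\,\|Q_M v - c\|_{W^{1,p}(T)}$. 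Since $(Q_M v - c)(\xi_1) = 0$ and $p>s$, Morrey's embedding together with a standard scaling argument to the reference element gives $\|Q_M v - c\|_{L^p(T)} \leq C\,h\,|Q_M v|_{W^{1,p}(T)}$, so the full $W^{1,p}$-norm is dominated by the seminorm uniformly in $h$. Concatenating all three estimates produces the claim. The hardest ingredient is this last Poincaré-type control at a single node, which is precisely what forces the hypothesis $p > s$ in the statement.
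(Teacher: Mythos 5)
Your proof is correct, and its skeleton is exactly the paper's: sandwich $|Q_M v|_{W^{1,q}(T)}\le C|Q_{\R^n}v|_{W^{1,q}(T)}$ (chain rule and the bound \eqref{eq::P1est} on $P'$, i.e.\ Proposition~\ref{prop:QMest} with $\hat m=1$), apply Theorem~\ref{thm:inverLin} to the polynomial $Q_{\R^n}v$, then compare back. The only place you diverge is the final inequality $|Q_{\R^n}v|_{W^{1,p}(T)}\le C\,|Q_M v|_{W^{1,p}(T)}$: the paper obtains it in one line from the identity $Q_{\R^n}(Q_M v)=Q_{\R^n}v$, the triangle inequality, and Theorem~\ref{thm::lin} applied to $Q_M v$ with $m=l=1$ (which is precisely where the hypothesis $p>s$ enters there), whereas you re-derive the underlying $W^{1,p}$-stability of nodal interpolation by hand: subtract the nodal constant $c$, invoke the local form of \eqref{equ::loc_lip}, and control the low-order term by a Morrey/Poincaré estimate at a node, which again requires $p>s$. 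The two justifications are equivalent in substance — both reduce to the continuity of point evaluation on $W^{1,p}$ for $p>s$ — but yours is self-contained and makes explicit why the $h^{-1}$ factor arising from scaling the interpolation operator is harmless, at the price of redoing work that Theorem~\ref{thm::lin} already packages; the paper's citation route is shorter but hides that mechanism. No gaps.
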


\begin{proof}
	The estimate follows directly from Proposition~\ref{prop:QMest} and Theorem~\ref{thm:inverLin}. We need the condition $p>s$ in order to apply Theorem~\ref{thm::lin} to estimate
	\begin{align*}
		|Q_{\R^{n}}v|_{W^{1,p}}\leq |Q_{M}v|_{W^{1,p}}+ |Q_{\R^{n}}Q_{M}v-Q_{M}v|_{W^{1,p}}\leq C\; |Q_{M}v|_{W^{1,p}}.&&&
		\qedhere
	\end{align*}
\end{proof}

\subsection{$TM$-valued interpolation}
In Section~\ref{sec::testfct}, we have defined interpolation of a vector field $\vfield{v}\in C(\Omega,u_{h}^{-1}TM)$ along a discrete function $u_{h}$ by $(Q_{u_{h}^{-1}TM}\vfield{v})(x) \colonequals P'(u_{h}(x))(Q_{\R^{n}}\vfield{v}(x))$. This definition is very similar to that of $M$-valued interpolation, with the difference that the pointwise projection $P'(Q_{\R^{n}}u_{h}(x))$ is even linear.
This linearity makes proving optimal interpolation error bounds for vector fields along given
discrete functions much easier than proving the error bounds for the discrete functions themselves.

\begin{theorem}\label{thm::TM_appr}
	Let $\Omega\subset \R^s$ be a bounded Lipschitz domain, $mp>s$, $u_{h}\in V_{h}(\Omega,M)$ such that $\|u_{h}\|_{W^{m,p}}$ is bounded independently of $h$, $\vfield{v}\in W^{m,p}\cap C(\Omega,u_{h}^{-1}TM)$, and $0\leq l< \min(m,r+1)$. Let the assumptions of Theorem~\ref{thm::lin} be satisfied. Assume further that $P$ is in $W^{l+1,\infty}$ on $U_\delta$.
	Then there exist constants $C > 0$ and $\alpha > 0$ such that
	\begin{multline*}
		|Q_{u_{h}^{-1}TM}\vfield{v}-\vfield{v}|_{W^{l,p}}\\
		\leq C\;	h^{\min(m,r+1)-l} |\vfield{v}|_{W^{\min(m,r+1),p}}\Big[1 + C h^{\alpha}(1+\|u_{h}\|_{W^{m,p}} + \|u_{h}\|_{W^{1,mp}}^{m})\Big].
	\end{multline*}
\end{theorem}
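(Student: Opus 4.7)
The plan starts from the key observation that $\vfield{v}\in u_h^{-1}TM$ means $\vfield{v}(x)\in T_{u_h(x)}M$ pointwise, and the preceding proposition identifies $P'(u_h(x))$ with orthogonal projection onto that subspace, so $P'(u_h)\vfield{v}=\vfield{v}$ holds pointwise. This yields the clean reformulation
$$Q_{u_h^{-1}TM}\vfield{v}-\vfield{v} \;=\; P'(u_h)\,w, \qquad w\colonequals Q_{\R^n}\vfield{v}-\vfield{v},$$
reducing the whole estimate to controlling Sobolev norms of a product of the bounded nonlinear factor $P'(u_h)$ and the purely Euclidean interpolation error $w$, for which Theorem~\ref{thm::lin} applies directly. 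This situation is structurally similar to Proposition~\ref{prop:QMest} but simpler, because $P'$ acts \emph{linearly} on $w$ rather than being composed with a polynomial.

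Working elementwise, I would apply the Leibniz rule: for each multi-index $|\vec{a}|\leq l$,
$$D^{\vec{a}}[P'(u_h)w] \;=\; P'(u_h)\,D^{\vec{a}}w \;+\; \sum_{\vec{0}\neq \vec{b}\leq \vec{a}}\binom{\vec{a}}{\vec{b}}\,D^{\vec{b}}[P'(u_h)]\,D^{\vec{a}-\vec{b}}w.$$
The $\vec{b}=\vec{0}$ term supplies the principal contribution: since $\|P'\|_{L^\infty}\leq C$, Theorem~\ref{thm::lin} gives $\|P'(u_h)D^{\vec{a}}w\|_{L^p}\leq Ch^{\hat{m}-|\vec{a}|}|\vfield{v}|_{W^{\hat{m},p}}$, producing the advertised main factor $Ch^{\hat{m}-l}|\vfield{v}|_{W^{\hat{m},p}}$. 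For $\vec{b}\neq\vec{0}$ I would apply Hölder with dual exponents $p_1=mp/|\vec{b}|$ and $p_2=mp/(m-|\vec{b}|)$ (well-defined since $|\vec{b}|\leq l<m$), expand $D^{\vec{b}}[P'(u_h)]$ via Faà di Bruno's formula into terms $P^{(k+1)}(u_h)[D^{\vec{b}_1}u_h,\ldots,D^{\vec{b}_k}u_h]$ (using $P\in W^{l+1,\infty}$ to bound $\|P^{(k+1)}\|_{L^\infty}$), and then control each factor $\|D^{\vec{b}_j}u_h\|_{L^{mp/|\vec{b}_j|}}$ with the same Gagliardo--Nirenberg--Young estimate used in Proposition~\ref{prop:QMest}. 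After multiplying the $k$ factors and absorbing cross-terms by Young's inequality, this yields $\|D^{\vec{b}}[P'(u_h)]\|_{L^{p_1}}\leq C(1+\|u_h\|_{W^{m,p}}+\|u_h\|_{W^{1,mp}}^m)$. For the companion factor, the standard Bramble--Hilbert generalization of Theorem~\ref{thm::lin} to $L^{p_2}$-norms (valid because $\hat{m}p>s$ delivers the embedding $W^{\hat{m},p}\hookrightarrow L^{p_2}$) gives
$$\|D^{\vec{a}-\vec{b}}w\|_{L^{p_2}}\leq Ch^{\hat{m}-|\vec{a}|+|\vec{b}|-s(1/p-1/p_2)}|\vfield{v}|_{W^{\hat{m},p}}=Ch^{\hat{m}-|\vec{a}|+|\vec{b}|(1-s/(mp))}|\vfield{v}|_{W^{\hat{m},p}}.$$
Setting $\alpha\colonequals 1-s/(mp)>0$, the product of the two Hölder factors carries $h^{\hat{m}-|\vec{a}|+|\vec{b}|\alpha}$, i.e., an extra $h^\alpha$ beyond the principal rate whenever $|\vec{b}|\geq 1$, exactly matching the stated correction term.

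The main obstacle I anticipate is the combinatorial bookkeeping of the Faà di Bruno step: one must verify that every monomial $\|u_h\|_{W^{m,p}}^a\|u_h\|_{W^{1,mp}}^b$ arising from the product of $k$ Gagliardo--Nirenberg bounds satisfies a homogeneity constraint (ultimately inherited from $|\vec{b}|\leq l<m$) allowing Young's inequality to dominate it cleanly by $1+\|u_h\|_{W^{m,p}}+\|u_h\|_{W^{1,mp}}^m$. Apart from this bookkeeping, every ingredient is already in place: the tangentiality identity $P'(u_h)\vfield{v}=\vfield{v}$ from the preceding proposition, the Leibniz and Faà di Bruno rules, the $L^{p_2}$-variant of Theorem~\ref{thm::lin}, and the Gagliardo--Nirenberg--Young chain from Proposition~\ref{prop:QMest}; no genuinely new machinery is required.
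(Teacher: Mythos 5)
Your proposal is correct and shares the paper's overall skeleton --- the tangentiality identity $\vfield{v}=P'(u_h)\vfield{v}$, so that the error is $P'(u_h)\bigl(Q_{\R^n}\vfield{v}-\vfield{v}\bigr)$, followed by a Leibniz/Fa\`a di Bruno expansion whose principal term is $\|P'\|_{L^\infty}$ times the Euclidean interpolation error and whose correction terms are handled by H\"older together with the Gagliardo--Nirenberg--Young chain of Proposition~\ref{prop:QMest} --- but you produce the $h^\alpha$ smallness of the corrections by a genuinely different mechanism. The paper keeps the Euclidean error at the unchanged rate $h^{\hat m-l}$, $\hat m=\min(m,r+1)$ (measured in $W^{l-k,\,ps/(s-kp)}$, with $s$-based H\"older exponents), and extracts the factor $h^\alpha$ from the geometric smallness of the higher derivatives of the projection near $M$, i.e.\ from \eqref{eq::P1est}--\eqref{eq::Pkest} combined with $\rho\lesssim h^\alpha$, the same Sobolev--H\"older embedding argument as in Theorem~\ref{thm::main_appr}. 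You instead bound $\|D^{\vec b}[P'(u_h)]\|_{L^{mp/|\vec b|}}$ merely by $C(1+\|u_h\|_{W^{m,p}}+\|u_h\|_{W^{1,mp}}^m)$ and gain $h^{|\vec b|\alpha}$ with the explicit $\alpha=1-s/(mp)>0$ from measuring the interpolation error with $|\vec b|$ fewer derivatives in the higher Lebesgue exponent $p_2=mp/(m-|\vec b|)$. Both routes give the stated bound; yours has the advantage of an explicit $\alpha$ and of not relying on the smallness estimate \eqref{eq::Pkest} (nor on the exponent $ps/(s-kp)$, which needs interpretation when $kp\ge s$), while its price is the two-exponent version of Theorem~\ref{thm::lin} (from $W^{\hat m,p}$ into $W^{j,p_2}$ with $p_2>p$), which is not literally stated in the paper but is standard and admissible here because $\hat m-(l-|\vec b|)\ge 1+|\vec b|>s|\vec b|/(mp)$ guarantees the required embedding on the reference element, and the elementwise $\ell^p\hookrightarrow\ell^{p_2}$ summation goes the right way. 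The bookkeeping you flag is indeed routine: in each Fa\`a di Bruno term the total power of $|u_h|_{W^{m,p}}$ coming from the interpolation exponents is $(|\vec b|-k)/(m-1)\le(l-1)/(m-1)\le 1$, so Young's inequality dominates every monomial by $1+\|u_h\|_{W^{m,p}}+\|u_h\|_{W^{1,mp}}^m$.
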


\begin{proof}
	As $\vfield{v}(x)=P'(u_{h}(x))(v(x))$, we can estimate for $l=0$, using~\eqref{eq::P1est}
		\begin{align*}
		\|Q_{u_{h}^{-1}TM}\vfield{v}-\vfield{v}\|_{L^{p}}
		&=
		\left(\int_{\Omega}\big| P'(u_{h}(x))(Q_{\R^{n}}\vfield{v}(x)-\vfield{v}(x)) \big|^{p}\,dx\right)^{\frac{1}{p}}\\
		&\leq \|P'(u_{h}(x)\|_{L^{\infty}}\|Q_{\R^{n}}\vfield{v}-\vfield{v}\|_{L^{p}}\\
		&\leq (1+C\;h^{\alpha})\|Q_{\R^{n}}\vfield{v}-\vfield{v}\|_{L^{p}}.
	\end{align*}
	For $l>0$, we have using the chain rule
	\begin{multline*}
		|Q_{u_{h}^{-1}TM}\vfield{v}-\vfield{v}|_{W^{l,p}}
		\leq \|P'(u_{h}(x)\|_{L^{\infty}}\|Q_{\R^{n}}\vfield{v}-\vfield{v}\|_{W^{l,p}}\\
		+C\sum_{k=1}^{l}\|\mathcal{P}^{(k+1)}(u_{h})\|_{L^{\infty}}\left(\|u_{h}\|_{W^{k,\frac{s}{k}}}+\|u_{h}\|_{W^{1,s}}^{k}+\|u_{h}\|_{W^{1,\frac{s}{k}}}\right)\|Q_{\R^{n}}\vfield{v}-\vfield{v}\|_{W^{l-k,\frac{ps}{s-kp}}}.
	\end{multline*}
	Using~\eqref{eq::P1est} and~\eqref{eq::Pkest}, we obtain
		\begin{multline*}
		|Q_{u_{h}^{-1}TM}\vfield{v}-\vfield{v}|_{W^{l,p}}
		\leq \|Q_{\R^{n}}\vfield{v}-\vfield{v}\|_{W^{l,p}}\\
		+C\;h^{\alpha}\sum_{k=1}^{l}\left(\|u_{h}\|_{W^{k,\frac{s}{k}}}+\|u_{h}\|_{W^{1,s}}^{k}+\|u_{h}\|_{W^{1,\frac{s}{k}}}\right)\|Q_{\R^{n}}\vfield{v}-\vfield{v}\|_{W^{l-k,\frac{ps}{s-kp}}}.
	\end{multline*}
	Application of Theorem~\ref{thm::lin} concludes the proof.
\end{proof}

\section{Discretization error estimates for harmonic maps}\label{disc_err_bnd}

We use the interpolation results of the previous section to show optimal discretization error bounds for
projection-based finite element approximations of harmonic maps.  For an open domain $\Omega\subset \R^{s}$ with
piecewise $C^{1}$-boundary, a smooth Riemannian manifold $(M,g)$, and a smooth map $v:\Omega\to M$,
we define the harmonic energy by
\begin{align}
\label{eq:harmEnergy}
 \mathcal{J}(v)\colonequals \frac{1}{2}\int_{\Omega} |dv|_{g}^{2}\;dx.
\end{align}
For a review on harmonic maps between Riemannian manifolds we refer to \cite{Wood}. In the following we will always assume
that $\mathcal{J}$ has a unique local minimizer within $H_{\varphi}^{1}(\Omega, M)$,
where $H_{\varphi}^{1}(\Omega, M)$ is the set of $H^1(\Omega,M)$-functions $v$ in the same homotopy class
as the given function $\varphi$, and with $v = \varphi$ on $\partial \Omega$.

We construct discrete harmonic maps $u_h$ by minimizing $\mathcal{J}$ in the projection-based
finite element space $V_h(\Omega,M)$.
Generalizing the approach for the Euclidean case, we prove $H^1$ error bounds by combining a
nonlinear Céa lemma with
an interpolation error bound.  We then use the non-Euclidean Aubin--Nitsche trick of~\cite{hardering:2018}
to obtain bounds on the $L^2$-error.

\subsection{Ellipticity}

We start with a definition of ellipticity for manifold-valued functions.
Unlike~\cite{grohs_hardering_sander:2015}, we define ellipticity with respect to an extrinsic error
measure.  The definition is locally equivalent to the intrinsic definition of~\cite{grohs_hardering_sander:2015},
but is easier to use in the case of embedded manifolds.
In the following, $B_\epsilon^\infty(v)$ denotes the closed $L^\infty$-ball of radius $\epsilon$
centered in $v$.
\begin{definition}\label{def::ellipt}
Let $M\subset \R^n$ be an embedded submanifold and $\Omega\subset \R^s$ a domain. A functional $\mathcal{J}\colon H\subset H^1(\Omega,M)\rightarrow \R$ is called
\emph{$H^1$-elliptic} around $u\in H$ if there exist $\lambda,\Lambda,\epsilon>0$ such that for all $v\in B_{\epsilon}^\infty(u)$
we have
$$
\lambda|v-u|^2_{H^1}\leq  \mathcal{J}(v)-\mathcal{J}(u)\leq \Lambda|v-u|^2_{H^1}.
$$
\end{definition}
Having ellipticity it is straightforward to prove a nonlinear C\'ea lemma.
\begin{lemma}\label{lem::cea}
Let $M\subset \R^n$ be a Riemannian submanifold of $\R^n$, $\Omega\subset \R^s$,
and $\mathcal{J}\colon H\subset H^1(\Omega,M)~\rightarrow~\R$ a functional with a minimizer $u\in H$
that is unique in a closed ball $B^\infty_\epsilon(u)$.
Assume that $\mathcal{J}$ is elliptic around $u$. Let $V\cap B^\infty_{\epsilon}(u)\subset H$ be a nonempty subset and
$$
v \colonequals \argmin_{w\in V\cap B^\infty_{\epsilon}(u)} \mathcal{J}(w).
$$
Then
\begin{equation*}
\left|v-u\right|_{H^1} \le \sqrt{\frac{\Lambda}{\lambda}} \inf_{w\in V\cap B^\infty_{\epsilon}(u)} |w-u|_{H^1}.
\end{equation*}
\end{lemma}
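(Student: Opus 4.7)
The plan is a direct three-line chase through the ellipticity inequalities, mirroring the classical proof of Céa's lemma but without any use of Galerkin orthogonality (which we cannot invoke since the energy is nonlinear and the admissible set $V\cap B^\infty_\epsilon(u)$ is not a linear subspace). The key observation is that both $u$ and $v$ lie in $B^\infty_\epsilon(u)$, and so does every competitor $w\in V\cap B^\infty_\epsilon(u)$, so all ellipticity estimates of Definition~\ref{def::ellipt} are applicable.

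First, I would apply the lower bound in the definition of $H^1$-ellipticity at $v$, giving
\begin{equation*}
\lambda |v-u|_{H^1}^2 \le \mathcal{J}(v) - \mathcal{J}(u).
\end{equation*}
Next, since $v$ is by definition a minimizer of $\mathcal{J}$ over $V\cap B^\infty_\epsilon(u)$, for any competitor $w\in V\cap B^\infty_\epsilon(u)$ we have $\mathcal{J}(v)\le \mathcal{J}(w)$, and so
\begin{equation*}
\mathcal{J}(v) - \mathcal{J}(u) \le \mathcal{J}(w) - \mathcal{J}(u).
\end{equation*}
Now I apply the upper bound in the definition of ellipticity to $w$, which yields $\mathcal{J}(w)-\mathcal{J}(u)\le \Lambda|w-u|_{H^1}^2$. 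Chaining these three inequalities gives $\lambda|v-u|_{H^1}^2\le \Lambda|w-u|_{H^1}^2$ for every $w\in V\cap B^\infty_\epsilon(u)$. Dividing by $\lambda$, taking the square root, and finally taking the infimum over $w$ produces the claimed bound.

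There is essentially no obstacle here, precisely because Definition~\ref{def::ellipt} has been arranged so that both the coercivity and the continuity of the energy increment are expressed through the \emph{same} $H^1$-seminorm; this sidesteps the usual difficulty in nonlinear settings of matching a quadratic lower bound in one norm to a quadratic upper bound in another. The only item worth double-checking is that the upper bound in Definition~\ref{def::ellipt} genuinely applies to every $w\in V\cap B^\infty_\epsilon(u)$, which it does by the hypothesis that $w\in B^\infty_\epsilon(u)$. One subtlety worth mentioning in the actual write-up is that the existence of the argmin defining $v$ is assumed as part of the hypothesis; the lemma does not claim well-posedness of the discrete problem, it only quantifies the approximation quality of any such minimizer under the ellipticity assumption.
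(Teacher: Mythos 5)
Your proposal is correct and follows exactly the same chain of inequalities as the paper's proof: coercivity at $v$, minimality of $v$ over $V\cap B^\infty_\epsilon(u)$, the upper ellipticity bound at $w$, then square root and infimum. Nothing to add.
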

\begin{proof}
By the ellipticity we have for any $w\in V$
$$
\lambda \abs{v-u}^2_{H^1}\leq \mathcal{J}(v)-\mathcal{J}(u)\leq \mathcal{J}(w)-\mathcal{J}(u)\le \Lambda \abs{w-u}^2_{H^1}.
$$
Taking the square root yields the desired result.
\end{proof}

In the following $\mathcal{J}$ will always denote the Dirichlet energy~\eqref{eq:harmEnergy}.
If $M$ is isometrically immersed in $\R^{n}$, it is well-known \cite{eels_lemaire:1978, Wood} that the Euler--Lagrange equation for critical points of $\mathcal{J}$ is
\begin{align}\label{eq:harmEulerLagrange}
\Delta u + \tr A(du,du)=0,
\end{align}
where $A$ denotes the second fundamental form of $M$. Thus, $u\in H^{2}(\Omega,M)$ is a critical point of $\mathcal{J}$
if
\begin{align}
\label{eq:laplace_is_normal}
\Delta u(x)\in T_{u(x)}M^{\perp}
\end{align}
almost everywhere.
Written in local coordinates, \eqref{eq:harmEulerLagrange} is a semilinear second-order elliptic system of partial differential equations. We show that it is also elliptic in the sense of Definition~\ref{def::ellipt}.
\begin{prop}\label{prop::harm_is_ellipt}
Let $M$ be a Riemannian submanifold of $\R^n$ such that the closest-point projection
$P\colon \R^n \supset U \to M$ is in $C^{2}$, $\Omega\subset \R^s$ with Poincaré constant $C_P$,
and $u\in W^{1,q}(\Omega,M)$, $q>\max\{2,s\}$, a critical point of the harmonic energy $\mathcal{J}$.
Let $\kappa$ denote the largest principal curvature of $M$.
Then if $\|d u\|_{L^{q}}<(\kappa C_P)^{-1} $, the functional $\mathcal{J}$ is elliptic around $u$.
\end{prop}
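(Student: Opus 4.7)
The plan is to expand the energy difference about the critical point $u$ and show that the resulting cross term is strictly dominated by the quadratic term. Setting $w \colonequals v-u \in H^1_0(\Omega,\R^n)$ (using that $u$ and $v$ share the boundary datum $\varphi$), the pointwise identity $|a|^2 - |b|^2 = 2 b\cdot(a-b) + |a-b|^2$ applied to $\nabla v$ and $\nabla u$ gives
\begin{align*}
\mathcal{J}(v) - \mathcal{J}(u) = \int_\Omega \nabla u : \nabla w\,dx + \tfrac{1}{2}|w|_{H^1}^2.
\end{align*}
The quadratic term already supplies ellipticity constants up to the value $1/2$; the entire task is to bound the cross term by a constant strictly smaller than $\tfrac12 |w|_{H^1}^2$, from which both inequalities of Definition~\ref{def::ellipt} follow immediately.

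Next I would invoke the extrinsic Euler--Lagrange equation~\eqref{eq:harmEulerLagrange} in its weak form. Tested against $w \in H^1_0(\Omega,\R^n)$, it reads
\begin{align*}
\int_\Omega \nabla u : \nabla w\,dx = \int_\Omega \tr A(du,du)\cdot w\,dx.
\end{align*}
Since $\tr A(du,du)(x) \in T_{u(x)}M^{\perp}$ by~\eqref{eq:laplace_is_normal}, only the pointwise normal component $w^N(x) \colonequals w(x) - \pi_{T_{u(x)}M} w(x)$ of $w$ enters. The key geometric input is a chord estimate for two nearby points of $M$: expanding a local parametrization of $M$ at $u(x)$ with second-order correction $\tfrac12 A(\xi,\xi)$ (where $\xi$ is the tangential part of the chord) yields
\begin{align*}
|w^N(x)| \leq \tfrac{\kappa}{2}|w(x)|^2 + o(|w(x)|^2)
\end{align*}
almost everywhere. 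Combining this with the pointwise bound $|\tr A(du,du)| \leq \kappa|du|^2$ gives
\begin{align*}
\Bigl|\int_\Omega \nabla u : \nabla w\,dx\Bigr| \leq \tfrac{\kappa^2}{2}\int_\Omega |du|^2\,|w|^2\,dx + (\text{higher order}).
\end{align*}

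To convert the right-hand side into $|w|_{H^1}^2$, I would apply Hölder with conjugate exponents $(q/2,q/(q-2))$ together with the Sobolev--Poincaré inequality
\begin{align*}
\|w\|_{L^{2q/(q-2)}} \leq C_P |w|_{H^1},
\end{align*}
valid for $w \in H^1_0(\Omega)$ precisely under the hypothesis $q > \max\{2,s\}$, which places $2q/(q-2)$ in the Sobolev range of $H^1$. Shrinking the $L^\infty$-ball radius $\epsilon$ to absorb the higher-order correction, the assumption $\|du\|_{L^q} < (\kappa C_P)^{-1}$ then furnishes
\begin{align*}
\lambda \colonequals \tfrac12\bigl(1 - (\kappa C_P\|du\|_{L^q})^2\bigr) > 0, \qquad \Lambda \colonequals \tfrac12\bigl(1 + (\kappa C_P\|du\|_{L^q})^2\bigr),
\end{align*}
which is exactly the ellipticity demanded by Definition~\ref{def::ellipt}.

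The main obstacle I expect is not the algebra but the justification of the weak extrinsic Euler--Lagrange equation under the rather weak regularity $u \in W^{1,q}$: one must either decompose $w$ into its tangential and normal parts (using criticality for the tangent piece and the Gauss formula for the normal piece) or bootstrap $u \in W^{2,q/2}$ from the equation itself in order to integrate by parts against $w$. In parallel, the pointwise chord estimate must be shown to hold measurably a.e.\ on $\Omega$, with $\epsilon$ chosen small enough so that the absorbed $o(|w|^2)$ remainder does not spoil the sharp threshold $(\kappa C_P)^{-1}$ dictated by the hypothesis.
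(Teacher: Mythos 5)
Your proposal is correct and follows essentially the same route as the paper: expand $\mathcal{J}(v)-\mathcal{J}(u)$ into the quadratic term plus a cross term, kill the tangential part of $v-u$ using that $\Delta u=-\tr A(du,du)$ is normal to $T_{u}M$, bound the remaining second-order contribution by $\tfrac{\kappa}{2}|v-u|^2$ times $\kappa|du|^2$, and close with H\"older and the Poincar\'e--Sobolev inequality under $\|du\|_{L^q}<(\kappa C_P)^{-1}$. The only cosmetic difference is that you phrase the second-order step as an extrinsic chord estimate for the normal component of $v-u$, whereas the paper obtains the same bound via a Taylor expansion along a pointwise geodesic homotopy with $\ddot\gamma=A(\dot\gamma,\dot\gamma)N$; the regularity caveat you raise about the weak Euler--Lagrange equation applies equally to the paper's argument.
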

\begin{proof}
For $v\in H_\varphi^{1}(\Omega,M)$ we have
\begin{align}
\label{eq:harmonic_energy_difference}
\mathcal{J}(v)-\mathcal{J}(u)
&=\frac{1}{2}|v-u|^2_{H^1}-\langle v-u,\Delta u\rangle_{L^2}.
\end{align}
Let $\gamma(x,\cdot):[0,1]\to M$ be a smooth family of curves connecting $u$ and $v$ pointwise.
Then $\langle\dot \gamma(\cdot, 0),\Delta u\rangle=0$ by~\eqref{eq:laplace_is_normal}, and
\begin{align*}
\langle v-u,\Delta u \rangle(x)
&= \int_{0}^{1}(1-t) \langle \ddot \gamma (x,t),\Delta u(x)\rangle \;dt\\
&= -\int_{0}^{1}(1-t) \big\langle \ddot \gamma (x,t),\tr A (du(x),du(x))\big\rangle \;dt
\end{align*}
for almost every $x$ in $\Omega$.
Suppose that $\gamma$ is even a geodesic homotopy; then $\ddot \gamma= A(\dot\gamma, \dot \gamma )N$, where $N$ is the outer normal to $M$.
Thus we obtain, using the Poincar\'e inequality, the estimate
\begin{align*}
|\langle v-u,\Delta u \rangle_{L^{2}}|&\leq \frac{\kappa^{2} C_{P}^{2} }{2} \|d u\|_{L^{q}}^{2} |u-v|_{H^{1}}^{2}<\frac{1}{2}|u-v|_{H^{1}}^{2}.
\end{align*}
Plugging this back into~\eqref{eq:harmonic_energy_difference} yields the assertion.
\end{proof}

\subsection{Discretization error estimates in $H^1$}

We will now combine the Céa Lemma and the approximation properties of the space $V_h(\Omega,M)$.

\begin{theorem}\label{T:H1Discerr}
Let the assumptions of Proposition~\ref{prop::harm_is_ellipt} be fulfilled, and let the local
minimizer $u$ of the harmonic energy be in $H^m(\Omega,M)$ with $m>\frac{s}{2}$.
Further assume that the assumptions of Theorem~\ref{thm::main_appr} are met for this $m$ and $p=2$.
Additionally suppose that $\|d u\|_{L^{q}}<(\kappa C_P)^{-1} $, where $\kappa$ denotes the largest
principal curvature of $M$, and $C_P$ is the Poincar\'e constant of $\Omega$.
Let $\varphi$ be such that $ V_{h}\cap H_{\varphi}(\Omega,M)$ is not empty, and set
\begin{equation*}
V_{h;K}
\colonequals
\Big\{w\in V_{h}\cap H_{\varphi}(\Omega,M)\;:\; \|dw\|_{L^{q}}\leq K \Big\},
\end{equation*}
with $K$ large enough and $h$ small enough such that $Q_{M}u\in V_{h;K}\cap B_\epsilon^\infty(u)$, where $\epsilon$ is small enough such that $\mathcal{J}$ is elliptic in an $\epsilon$-neighborhood of $u$.
Set
\begin{align}\label{eq:discProb}
u_{h}\colonequals \argmin_{w\in V_{h;K}\cap B_\epsilon^\infty(u)} \mathcal{J}(w).
\end{align}
Then for $h$ small enough and $\hat{m}\colonequals\min(m,r+1)$ we have
\begin{align}\label{eq:H1err}
\left|u_{h}-u\right|_{H^1}\leq C\sqrt{\frac{\Lambda}{\lambda}}  h^{\hat{m}-1}|u|_{H^{\hat{m}}}.
\end{align}

If the grid is shape-regular, affine-equivalent, and quasi-uniform, the map $u_{h}$ is indeed a local minimizer of $\mathcal{J}$ in $ V_{h}\cap H_{\varphi}(\Omega,M)$, if we additionally assume that $q<\frac{2s}{s-2\min(m-1,r)}$ in case $2\min(m-1,r)<s$.
\end{theorem}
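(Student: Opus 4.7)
The plan is to split the proof along the two assertions. For the $H^1$-error bound~\eqref{eq:H1err}, I would apply the nonlinear Céa Lemma~\ref{lem::cea} directly to the constrained problem~\eqref{eq:discProb}. Ellipticity of $\mathcal{J}$ around $u$ is supplied by Proposition~\ref{prop::harm_is_ellipt}, the uniqueness of $u$ in $B^\infty_\epsilon(u)$ is assumed, and $Q_M u$ lies in $V_{h;K}\cap B^\infty_\epsilon(u)$ by the hypothesis on $K$ and~$h$. Lemma~\ref{lem::cea} therefore yields
\begin{equation*}
|u_h-u|_{H^1} \leq \sqrt{\Lambda/\lambda}\, \inf_{w\in V_{h;K}\cap B^\infty_\epsilon(u)} |w-u|_{H^1} \leq \sqrt{\Lambda/\lambda}\, |Q_M u-u|_{H^1}.
\end{equation*}
I would then invoke Theorem~\ref{thm::main_appr} with $l=1$ and $p=2$ to bound $|Q_M u-u|_{H^1} \leq C h^{\hat m-1}|u|_{H^{\hat m}}$, absorbing the lower-order $h^\alpha$ correction into the leading constant for $h$ small enough. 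This gives~\eqref{eq:H1err}.

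For the second assertion it suffices to show that for sufficiently small $h$ neither the $L^\infty$-constraint nor the $L^q$-gradient constraint is active at $u_h$. For the $L^\infty$ part I would estimate
\begin{equation*}
\|u_h-u\|_{L^\infty} \leq \|u_h-Q_M u\|_{L^\infty}+\|Q_M u-u\|_{L^\infty}.
\end{equation*}
Since $u_h=\mathcal{P}(Q_{\R^n} u_h)$ and $Q_M u=\mathcal{P}(Q_{\R^n} u)$, the Lipschitz continuity of $\mathcal{P}$ reduces the first term to $\|Q_{\R^n}(u_h-u)\|_{L^\infty}$, a purely Euclidean discrete quantity on which a standard $L^2\to L^\infty$ inverse estimate applies. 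The resulting $L^2$ bound follows from the $H^1$-error~\eqref{eq:H1err} via Poincaré (both $u_h$ and $u$ have boundary data $\varphi$), together with stability of $Q_{\R^n}$ guaranteed by the regularity hypothesis $\hat m>s/2$ and~\eqref{equ::loc_lip}. The second term is handled by Sobolev embedding $W^{\hat m,2}\hookrightarrow L^\infty$ applied to Theorem~\ref{thm::main_appr} at level $l=0$.

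For the $L^q$-gradient constraint I would split
\begin{equation*}
\|du_h\|_{L^q}\leq \|d(u_h-Q_M u)\|_{L^q}+\|dQ_M u\|_{L^q},
\end{equation*}
where the second term converges to $\|du\|_{L^q}<(\kappa C_P)^{-1}\leq K$ by Theorem~\ref{thm::main_appr} applied with $l=1$, $p=q$. For the first term I would expand via the chain rule
\begin{equation*}
du_h-dQ_M u = P'(Q_{\R^n} u_h)[dQ_{\R^n}(u_h-u)] + \big(P'(Q_{\R^n} u_h)-P'(Q_{\R^n} u)\big)[dQ_{\R^n} u],
\end{equation*}
bound the coefficients by $\|P'\|_{L^\infty}$ and $\operatorname{Lip}(P')$, and apply the Euclidean inverse estimate of Theorem~\ref{thm:inverLin} to $Q_{\R^n}(u_h-u)$, which is a genuine discrete function. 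The Sobolev embedding $W^{\min(m-1,r),2}\hookrightarrow L^q$, valid precisely under the assumption $q<\tfrac{2s}{s-2\min(m-1,r)}$ when $2\min(m-1,r)<s$, is then used to keep the resulting negative $h$-powers strictly smaller than the positive order $h^{\hat m-1}$ coming from the $H^1$-error.

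The main obstacle is the second assertion, and specifically the bookkeeping in the $L^q$-gradient estimate: the difference $u_h-Q_M u$ is not itself in $V_h(\Omega,M)$, so the inverse estimate Theorem~\ref{thm:inverse} does not apply to it directly. The route taken above relies on the linearity of $Q_{\R^n}$ together with the $C^1$ regularity of $\mathcal{P}$ to trade $M$-valued differences for $\R^n$-valued discrete differences at only a constant cost, which is exactly what allows the Sobolev-embedding condition on $q$ to close the estimate.
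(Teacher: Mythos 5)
Your first half (the nonlinear C\'ea Lemma~\ref{lem::cea} combined with Theorem~\ref{thm::main_appr} for $l=1$, $p=2$, absorbing the $h^\alpha$-terms for small $h$) is exactly the paper's proof of \eqref{eq:H1err}. The second half, however, has genuine gaps. First, you control $\|dQ_Mu\|_{L^q}$ by ``Theorem~\ref{thm::main_appr} with $l=1$, $p=q$'', but that theorem needs $u\in W^{m',q}$ for some integer $m'\ge 2$ (with $\min(m',r+1)>s/q$), whereas $u$ is only assumed to lie in $H^m\cap W^{1,q}$; in the minimal admissible cases ($s=2,3$, $m=2$, $q>\max\{2,s\}$) one has $H^2\not\subset W^{2,q}$, so the theorem is not applicable with $p=q$. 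The $W^{1,q}$-closeness has to be obtained by interpolating between $W^{1,2}$ and $W^{1,k}$ with $k=\frac{2s}{s-2(\hat m-1)}$ and using $H^{\hat m}\hookrightarrow W^{1,k}$ --- which is exactly where the extra hypothesis $q<k$ enters. Second, your $L^\infty$-argument does not close: the $L^2\to L^\infty$ inverse estimate costs $h^{-s/2}$, and the only $L^2$-bound available at this stage (Poincar\'e applied to \eqref{eq:H1err}; the $L^2$-rate $h^{\hat m}$ is proved only in the later theorem, whose proof presupposes the present local-minimality statement) is $O(h^{\hat m-1})$, leaving $h^{\hat m-1-s/2}$, which is not small for $s=2,3$, $\hat m=2$. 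Moreover, bounding $\|Q_{\R^n}(u_h-u)\|_{L^2}$ by $\|u_h-u\|_{L^2}$ via ``stability of $Q_{\R^n}$'' is not available: \eqref{equ::loc_lip} requires $lp>s$, and nodal interpolation is not $L^2$-stable. Third, in your gradient estimate the input $|Q_{\R^n}(u_h-u)|_{H^1}\lesssim h^{\hat m-1}$ needed for Theorem~\ref{thm:inverLin} requires comparing $Q_{\R^n}u_h$ with $u_h$ in $H^1$, i.e.\ grid-dependent higher-order bounds on $u_h$ of the type \eqref{eq:2bBound}, which are not established at this point of the argument.

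The paper's proof avoids all three problems by never detouring through $Q_Mu$ or $Q_{\R^n}(u_h-u)$ for this step: it interpolates the $W^{1,q}$-seminorm of $u-u_h$ between $W^{1,2}$ and $W^{1,k}$, uses \eqref{eq:H1err} for the $W^{1,2}$-part, bounds $|u|_{W^{1,k}}$ by Sobolev embedding and $|u_h|_{W^{1,k}}$ by the $M$-valued inverse estimate of Theorem~\ref{thm:inverse} applied to $u_h$ itself (legitimate since $u_h\in V_h$ with $\|du_h\|_{L^q}\le K$ and $q>s$), and balances the $h$-weights so that $|u-u_h|_{W^{1,q}}\le Ch^\delta$ with $\delta>0$ precisely under $q<k$; the $L^\infty$-smallness then follows from $W^{1,q}\hookrightarrow L^\infty$ for $q>s$, with no inverse-estimate loss. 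Your observation that $u_h-Q_Mu$ is not in $V_h$ is correct, but the remedy is to apply the inverse estimate to $u_h$ alone rather than to any difference; if you insist on your decomposition you would still need the interpolation-of-norms device plus a proof of $\|Q_{\R^n}u_h-u_h\|_{H^1}\lesssim h^{\hat m-1}$, so the paper's route is both shorter and avoids the missing ingredients.
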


\begin{proof}
	By restriction to $V_{h;K}\cap B_\epsilon^\infty(u)$ and the choice of $K$ and $\epsilon$ we can apply Lemma~\ref{lem::cea} and Theorem~\ref{thm::main_appr} to obtain for $h$ small enough that
\begin{align*}
\left|u_{h}-u\right|_{H^1}
& \leq C \sqrt{\frac{\Lambda}{\lambda}}  h^{\hat{m}-1}\Big[|u|_{H^{\hat{m}}}+C\;L(P)h^{\alpha} \|u\|_{H^{\hat{m}}}(\|u\|_{H^{\hat{m}}} + |v|_{W^{1,2\hat{m}}}^{\hat{m}}) \Big]\\
& \leq C \sqrt{\frac{\Lambda}{\lambda}}  h^{\hat{m}-1} |u|_{H^{\hat{m}}}.
\end{align*}
To show that $u_h$ is a local minimizer in $V_h \cap H_\varphi$,
let $k\colonequals \frac{2s}{s-2(\hat{m}-1)}$ if $2(\hat{m}-1)<s$, and arbitrarily large otherwise.
By assumption $\max\{s,2\}<q<k$. Thus, we can set $\mu\colonequals(2^{-1} - q^{-1})(q^{-1}-k^{-1})^{-1}$, $\beta\colonequals (1+\mu)^{-1}(\hat{m}-1-s(2^{-1} - q^{-1}))>0$, and use $L^{p}$-interpolation, the $H^1$ bound \eqref{eq:H1err},
the Sobolev Embedding Theorem, and Theorem~\ref{thm:inverse} to estimate
\begin{align*}
|u_{h}-u|_{W^{1,q}}&\leq h^{-(\hat{m}-1)+\delta}|u-u_{h}|_{H^{1}} + h^{\mu^{-1}(\hat{m}-1 -\delta)} |u-u_{h}|_{W^{1,k}}\\
&\leq Ch^{\delta}|u|_{W^{\hat{m},p}} + h^{\mu^{-1}(\hat{m}-1 -\delta)} \left(|u|_{W^{1,k}}+ |u_{h}|_{W^{1,k}}\right)\\
&\leq Ch^{\delta}(\|u\|_{W^{\hat{m},p}}  + K).
\end{align*}
As $\|u_h-u\|_{L^{\infty}}\leq C|u_{h}-u|_{W^{1,q}}\leq Ch^{\delta}$, we can choose $h$ small enough such that $\|du_{h}\|_{L^{q}}< K$ and $\|u_{h}-u\|_{L^{\infty}}< \epsilon$, so that $u_{h}$ is indeed a local minimizer in $V_{h}\cap H_{\varphi}(\Omega,M)$.
\end{proof}

\subsection{Discretization error estimates in $L^2$}

To obtain optimal $L^2$-discretization error estimates we apply \cite[Thm.\,2.13]{hardering:2018}, which is a generalization of the Aubin--Nitsche Lemma~\cite[Thm.\,3.2.5]{ciarlet}.
With slightly adapted notation, it states the following:
\begin{theorem}
\label{thm:l2_generic_bound}
	Let $m>\frac{s}{2}$ and assume that $u\in H_{\varphi}^{m}(\Omega,M)$ is a minimizer of an elliptic, semi-linear  energy $E:H_{\varphi}^{m}(\Omega,M)\to \R$, that has an $H^{2}$-regular dual problem, i.e., for all $\vfield{g}\in L^{2}(\Omega,u^{-1}TM)$ there exists a solution $\vfield{w}\in H^{2}(\Omega,u^{-1}TM)$ to
	\begin{align*}
	\vfield{w}\in H^{1}_{0}(\Omega,u^{-1}TM)\qquad \delta^{2}E(u)(\vfield{w},\vfield{v})=-(\vfield{g},\vfield{v})\qquad \forall \vfield{v}\in H^{1}_{0}(\Omega,u^{-1}TM)
	\end{align*}
	with
	\begin{align*}
	\|\vfield{w}\|_{W^{2,2}}\leq C\;\|\vfield{g}\|_{L^{2}}.
	\end{align*}
	For a given shape-regular, affine-equivalent, quasi-uniform grid $\mathcal{G}$, let $S_{h}\subset H_{\varphi}(\Omega,M)$ be a discrete approximation space, such that
	for all $v\in H_{\varphi}\cap C(\Omega,M;\rho)$ with $v|_{T_{h}}\in H^{m}(T_{h},M)$ for all $T_{h}$, there exists an approximating map $v_{I}\in S_{h}$ with
	\begin{align}\label{eq:cond1b}
	|v_{I}|_{H^{m}} + |v_{I}|_{W^{1,2m}}^{m}\leq C\;	\left(|v|_{H^{m}} + |v|_{W^{1,2m}}^{m}\right)
	\end{align}
	that fulfills the estimate
	\begin{align}\label{eq:cond1a}
	\|v-v_{I}\|_{L^{2}} + h\;	|v-v_{I}|_{W^{1,2}} \leq
	C(v) \;h^{l}
	\end{align}
	for $l\geq 2$.
	Assume further that each discrete map $v_{h}\in S_{h}\cap C(\Omega,M;\rho)$ fulfills inverse estimates of the form
	\begin{align} \label{eq:inverse2p1qScaled}
	|v_{h}|_{W^{1,p}}&\leq C\;h^{-d\max\left\{0,\frac{1}{q}-\frac{1}{p}\right\}}\;|v_{h}|_{W^{1,q}}
	\end{align}
	for $q>\max\{2,s\}$.
	Finally, assume that for each vector field $\vfield{v}\in C(\Omega,v_{h}^{-1}TM)$ along a discrete function $v_{h}\in S_{h}$ that is in  $W^{2,2}(T_{h},v_{h}^{-1}TM)$ for each element $T_{h}$, there exist a variation $\vfield{v}_{I}$ of maps in $S_{h}$ such that
	\begin{align}\label{eq:vecInterpolCond}
	\| \vfield{v} - \vfield{v}_{I}\|_{W^{1,2}} &\leq C(v_{h},\vfield{v}) h.
	\end{align}
	Then if the discrete minimizer
	\begin{align*}
	u_{h}\colonequals \argmin_{v_{h}\in S_{h}} E(v_{h})
	\end{align*}
	fulfills the a priori error estimate
	\begin{align}\label{eq:H1apriori}
	|u-u_{h}|_{W^{1,2}}\leq C(u) h^{l-1},
	\end{align}
	and on each element $T_{h}$ the estimate
	\begin{align} \label{eq:2bBound}
	\|u_{h}\|_{W^{2,p_{2}}(T_{h})}^{p_{2}} + \|u_{h}\|_{W^{1,2p_{2}}(T_{h})}^{2p_{2}}\leq K_{2}^{p_{2}}
	\end{align}
	for a constant $K_{2}$ and $p_{2}=\max\{2,\frac{s}{2}\}$ for $s\neq 4$ (and $p_{2}>2$ for $s=4$), we get
	\begin{align*}
	\|u-u_{h}\|_{L^{2}}\leq C(u)\,h^{l}.
	\end{align*}
\end{theorem}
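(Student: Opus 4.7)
The approach is the nonlinear Aubin--Nitsche duality trick, adapted to manifold-valued problems via the embedding of $M$ in $\R^n$. I would view both $u$ and $u_h$ as elements of $H^1(\Omega,\R^n)$ and estimate the ambient Euclidean error $u-u_h$. The dual problem is to be tested with $\vfield{g}$ chosen as the orthogonal projection of $u-u_h$ onto the pullback bundle $u^{-1}TM$; the $H^2$-regularity assumption then yields $\vfield{w}\in H^2\cap H^1_0(\Omega,u^{-1}TM)$ with $\|\vfield{w}\|_{W^{2,2}}\le C\|\vfield{g}\|_{L^2}\le C\|u-u_h\|_{L^2}$. The normal component of $u-u_h$ is of order $\|u-u_h\|_{L^\infty}\cdot\|u-u_h\|_{L^2}$ by a curvature estimate (since $u(x),u_h(x)\in M$), so via the a priori bound \eqref{eq:H1apriori} and Sobolev embedding it enters only as a higher-order term.

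The heart of the argument is an identity of the form
\begin{align*}
\|u-u_h\|_{L^2}^2 \;\approx\; (\vfield{g},u-u_h)_{L^2} \;=\; -\delta^2 E(u)(\vfield{w},\vfield{v}),
\end{align*}
where $\vfield{v}$ denotes the tangent part of $u-u_h$ along $u$. A nonlinear Galerkin orthogonality then arises from subtracting the Euler--Lagrange identities $\delta E(u)(\vfield{v})=0$ and $\delta E(u_h)(\vfield{v}_h)=0$ and Taylor-expanding $\delta E$ around $u$. Evaluating on a discrete interpolant $\vfield{w}_I$ of $\vfield{w}$ satisfying \eqref{eq:vecInterpolCond}, one replaces $\vfield{w}$ by $\vfield{w}_I$ at cost $Ch\|\vfield{w}\|_{W^{2,2}}$, obtaining
\begin{align*}
\|u-u_h\|_{L^2}^2 \;\le\; C\,h\,\|\vfield{w}\|_{W^{2,2}}\,|u-u_h|_{W^{1,2}} + R.
\end{align*}
Combined with the a priori bound $|u-u_h|_{W^{1,2}}\le Ch^{l-1}$ and $\|\vfield{w}\|_{W^{2,2}}\le C\|u-u_h\|_{L^2}$, the first term yields the desired order $h^l$.

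The main obstacle is to control the remainder $R$. In the linear Euclidean case this term vanishes because $\delta^2 E$ is bilinear and Galerkin orthogonality is exact; here, Taylor expansion of $\delta E$ produces cubic residuals in $u-u_h$ paired with $\vfield{w}$, as well as second-fundamental-form contributions that mix with second weak derivatives of $u_h$. The hypotheses \eqref{eq:inverse2p1qScaled} and \eqref{eq:2bBound} are tailored precisely to tame these: \eqref{eq:2bBound} supplies the $W^{2,p_2}$ and $W^{1,2p_2}$ norms of $u_h$ needed to bound chain-rule factors coming from the curvature of $M$, while \eqref{eq:inverse2p1qScaled} allows discrete test-field norms to be traded across $L^p$ scales at a controlled cost in $h$. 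A careful accounting, using Sobolev embeddings at the threshold $p_2=\max\{2,s/2\}$ (and slightly above for $s=4$) together with \eqref{eq:cond1a}--\eqref{eq:cond1b} for the interpolant of $u$, should yield $R\le Ch^l\|u-u_h\|_{L^2}+Ch^{2l}$; dividing through by $\|u-u_h\|_{L^2}$ then gives the claim.
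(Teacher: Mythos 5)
First, a point of comparison: the paper does not prove this statement at all. It is imported, with adapted notation, as Theorem~2.13 of the cited work of Hardering on the nonlinear Aubin--Nitsche lemma, so there is no in-paper proof to match your argument against --- only the remarks immediately following the theorem, which say where the genuine difficulty of the nonlinear case lies.

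Measured against that, your sketch follows the right overall duality strategy but omits precisely the step those remarks identify as the heart of the matter. The dual solution $\vfield{w}$ is a section of $u^{-1}TM$, i.e.\ a vector field along $u$, whereas the discrete test-field hypothesis \eqref{eq:vecInterpolCond} only provides interpolants for vector fields along discrete maps $v_{h}\in S_{h}$, and the discrete Euler--Lagrange identity $\delta E(u_{h})(\vfield{v}_{h})=0$ is likewise only available for variations along $u_{h}$. So before you can ``evaluate on a discrete interpolant $\vfield{w}_{I}$ of $\vfield{w}$ satisfying \eqref{eq:vecInterpolCond}'', you must transport $\vfield{w}$ from $u^{-1}TM$ into $u_{h}^{-1}TM$ along a suitable connecting (geodesic) homotopy, and you must show that this transport preserves $W^{2,2}$-norms of vector fields; that is exactly where the grid-dependent bound \eqref{eq:2bBound} on $u_{h}$ is used, not merely to ``bound chain-rule factors'' in a remainder. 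Your plan never constructs this transport, so neither the nonlinear Galerkin orthogonality nor the replacement of $\vfield{w}$ by an admissible discrete variation is actually available as written. In addition, the decisive remainder estimate $R\leq C\,h^{l}\|u-u_{h}\|_{L^{2}}+C\,h^{2l}$ is asserted (``a careful accounting \dots should yield'') rather than derived; since controlling these cubic and curvature terms with the stated exponents $p_{2}=\max\{2,\tfrac{s}{2}\}$ and the inverse estimate \eqref{eq:inverse2p1qScaled} is the technical substance of the theorem, what you have is an outline of the strategy rather than a proof.
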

The main difference to the Euclidean Aubin--Nitsche lemma is that in order to compare test vector fields along
$u$ and $u_{h}$, one needs to be transported into the space of the other along a suitable connecting curve.
This transportation needs to preserve $H^2$-norms of vector fields.
In \cite{hardering:2018} this preservation is proven for the case that the functions $u$ and $u_h$
have bounded grid-dependent $H^2$-norm. This leads to the additional assumption~\eqref{eq:2bBound}.

For technical reasons, we restrict ourselves to the practically relevant case $s<4$. Other dimensions my be dealt with similarly as discussed in \cite{hardering:2018}.
\begin{theorem}
\label{thm:L2_discretization_error_bound}
		Consider the setting of Theorem~\ref{T:H1Discerr}
		with $m\geq 2$, $s<4$ and $q\geq 4$, including the assumptions for  $u_{h}$ to be a local minimizer in $ V_{h}\cap H_{\varphi}(\Omega,M)$.
		Then
		\begin{align*}
		\|u-u_{h}\|_{L^2}\leq C\;h^{\min(m,r+1)}.
		\end{align*}
\end{theorem}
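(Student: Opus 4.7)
The plan is to apply Theorem~\ref{thm:l2_generic_bound} with discrete space $S_h \colonequals V_{h;K} \cap B_\epsilon^\infty(u) \cap H_\varphi(\Omega,M)$ and approximation order $l = \hat m \colonequals \min(m, r+1) \geq 2$. Under the assumptions inherited from Theorem~\ref{T:H1Discerr}, the Dirichlet energy $\mathcal{J}$ is $H^1$-elliptic around $u$ by Proposition~\ref{prop::harm_is_ellipt}. The dual problem is a linear elliptic system for vector fields along $u^{-1}TM$ whose coefficients are smooth (built from $du$ and the second fundamental form of $M$), so standard elliptic regularity on a sufficiently regular $\Omega$ yields the required $H^2$-regularity. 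The condition $m > s/2$ is guaranteed by $m \geq 2$ and $s < 4$.

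Next I would verify the function-space hypotheses by invoking the earlier results. For~\eqref{eq:cond1a}, I would take $v_I \colonequals Q_M v$ and apply Theorem~\ref{thm::main_appr} with $p = 2$ at orders $l = 0$ and $l = 1$; the two bounds combine to give~\eqref{eq:cond1a} with $l = \hat m$. For~\eqref{eq:cond1b}, Proposition~\ref{prop:QMest} combined with the Sobolev stability~\eqref{equ::loc_lip} of $Q_{\R^n}$ controls both $|Q_M v|_{W^{m,2}}$ and $|Q_M v|_{W^{1,2m}}$ by the corresponding norms of $v$ (here $m > s/2$ makes~\eqref{equ::loc_lip} applicable). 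The inverse estimate~\eqref{eq:inverse2p1qScaled} is exactly Theorem~\ref{thm:inverse}. For the vector-field interpolation~\eqref{eq:vecInterpolCond}, I would set $\vfield{v}_I \colonequals Q_{v_h^{-1}TM} \vfield{v}$ and apply Theorem~\ref{thm::TM_appr} with $l = 1$, $p = 2$, $m = 2$; the prefactor there is $\mathcal{O}(1)$ precisely under the elementwise bound~\eqref{eq:2bBound}.

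The $H^1$ a priori bound~\eqref{eq:H1apriori} with exponent $l - 1 = \hat m - 1$ is~\eqref{eq:H1err} of Theorem~\ref{T:H1Discerr}. The hypothesis $q \geq 4 > \max\{2, s\}$ also ensures via the second conclusion of that theorem that $u_h$ is a local minimizer of $\mathcal{J}$ in the full space $V_h \cap H_\varphi(\Omega, M)$, as required here. Since $s < 4$, one has $p_2 = \max\{2, s/2\} = 2$, so~\eqref{eq:2bBound} becomes the elementwise bound
\begin{equation*}
\|u_h\|_{W^{2,2}(T_h)}^2 + \|u_h\|_{W^{1,4}(T_h)}^4 \leq K_2^2.
\end{equation*}

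I expect this last verification to be the main obstacle. The $W^{1,4}$-part is immediate from $u_h \in V_{h;K}$ together with $q \geq 4$. For the $W^{2,2}$-part I would write $u_h = \mathcal{P}\bigl(\sum_i u_h(\xi_i)\phi_i\bigr)$ and apply the chain-rule estimate of Proposition~\ref{prop:QMest} elementwise with $\hat m = 2$, reducing the problem to bounding $|Q_{\R^n} u_h|_{W^{2,2}(T_h)}$ and $|Q_{\R^n} u_h|_{W^{1,4}(T_h)}$ independently of $h$. The second factor is controlled by $K$, and the first vanishes for $r = 1$; for $r \geq 2$ one combines the $H^1$ error bound~\eqref{eq:H1err} with the inverse estimate Theorem~\ref{thm:inverLin} and the $H^m$-regularity of $u$ to close the estimate. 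Once~\eqref{eq:2bBound} is in place, Theorem~\ref{thm:l2_generic_bound} delivers $\|u - u_h\|_{L^2} \leq C h^{\hat m}$, which is the claim.
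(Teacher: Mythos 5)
Your proposal is correct and follows essentially the same route as the paper: verify the hypotheses of Theorem~\ref{thm:l2_generic_bound} via Proposition~\ref{prop::harm_is_ellipt}, Proposition~\ref{prop:QMest}, Theorems~\ref{thm::main_appr}, \ref{thm:inverse}, \ref{thm::TM_appr} and \ref{T:H1Discerr}, with the only real work being the elementwise bound~\eqref{eq:2bBound}, which you close exactly as the paper does, by combining the chain-rule estimate, the linear inverse estimate of Theorem~\ref{thm:inverLin}, the $H^1$ error bound~\eqref{eq:H1err} and the $H^m$-regularity of $u$ (the paper organizes this by comparing $u_h$ with $Q_Mu$ through their Euclidean interpolants, but the ingredients and exponent counting are the same).
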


\begin{proof}
  We prove the assertion by verifying the assumptions of Theorem~\ref{thm:l2_generic_bound}.
	First note that the harmonic map energy is indeed elliptic (Proposition~\ref{prop::harm_is_ellipt}),
	semi-linear, and has an $H^2$-regular dual problem~\cite{hardering:2015}.

	The approximation error estimates for maps~\eqref{eq:cond1b} and~\eqref{eq:cond1a} are provided by Proposition~\ref{prop:QMest} and Theorem~\ref{thm::main_appr}, respectively.
	The inverse estimate~\eqref{eq:inverse2p1qScaled} is given in Theorem~\ref{thm:inverse}.
	The interpolation error bound~\eqref{eq:vecInterpolCond} for vector fields follows from Theorem~\ref{thm::TM_appr}.
	The $H^1$ a priori bound~\eqref{eq:H1apriori} is proven in Theorem~\ref{T:H1Discerr}.

	Thus, all that is left to show is~\eqref{eq:2bBound}, i.e., for a solution $u_h$ of \eqref{eq:discProb} we need to show that there exists
	a constant $K_2$ such that the grid-dependent homogeneous $W^{2,2}\cap W^{1,4}$-norm of $u_{h}$ is bounded, i.e.,
\begin{align}
	\label{eq:h1_inverse}
		\int_{\Omega}|du_{h}|^{4}\;dx + \sum_{T_{h}\in \mathcal{G}}\int_{T_{h}}|\nabla^{2}u_{h}|^{2}\;dx\leq K_{2}^{2}.
\end{align}
The boundedness of the first integral even without the elementwise partition follows because the assumptions
$q\geq 4$ and $u_{h}\in V_{h;K}\cap B_\epsilon^\infty(u)$ imply $\|du_{h}\|_{L^{4}}\leq C\,K$.

Set $\hat u_{h}=Q_{\R^{n}}u_{h}$, and $\hat u_{I}= Q_{\R^{n}}u$. As the continuity of $Q_{M}$ implies that $\|\nabla^{2}u_{I}\|_{L^2(\mathcal{G})}$ is bounded, by the triangle inequality it is enough to obtain an estimate on $\|\nabla^{2}u_{h}-\nabla^{2}u_{I}\|_{L^{2}(\mathcal{G})}$.
	By the chain rule we have
	\begin{align*}
		\|\nabla^{2}u_{h}-\nabla^{2}u_{I}\|_{L^{2}}
		&\leq  \|\mathcal{P}'(\hat{u}_{h})(\nabla^{2}\hat{u}_{h})-\mathcal{P}'(\hat{u}_{I})(\nabla^{2}\hat{u}_{I})\|_{L^2}\\
		&\qquad  + \|\mathcal{P}''(\hat{u}_{h})(d\hat{u}_{h},d\hat{u}_{h})
		-\mathcal{P}''(\hat{u}_{I})(d\hat{u}_{I},d\hat{u}_{I})\|_{L^2}\\
		&\leq  \|\mathcal{P}'\|_{L^{\infty}}\|\nabla^{2}\hat u_{h}-\nabla^{2}\hat u_{I}\|_{L^2}\\
		&\qquad + \Lip(\mathcal{P}')\|\nabla^{2}\hat u_{I}\|_{L^{2}}\|\hat u_{h} - \hat v_{h}\|_{L^{\infty}}\\
		&\qquad  + \|\mathcal{P}''\|_{L^{\infty}}\left(\|d\hat{u}_{h}\|_{L^{4}}^{2}+\|d \hat{u}_{I}\|_{L^{4}}^{2}\right).
	\end{align*}
	By~\eqref{equ::loc_lip}, the $W^{1,4}$-bounds on $u_{h}$ and $u_{I}$ transfer to $\hat u_{h}$ and $\hat u_{I}$, and $\|\nabla^{2}\hat u_{I}\|_{L^{2}}$ is bounded. Further, we choose an exponent $a = a(s)$ such that $W^{1,2}\hookrightarrow L^{a}$ which exists by the Sobolev embedding theorem
	 Then by Theorem~\ref{thm:inverLin} we have
	\begin{align*}
		\|\nabla^{2}\hat u_{h}-\nabla^{2}\hat u_{I}\|_{L^2} + \|\hat u_{h}-\hat u_{I}\|_{L^\infty}
		&\leq C(h^{-1} + h^{-\frac{s}{a}})\|\hat u_{h}-\hat u_{I}\|_{W^{1,2}}.
	\end{align*}
	Thus, by~\eqref{equ::loc_lip} and Theorem~\ref{T:H1Discerr}, we have
		\begin{align*}
	\|\nabla^{2}\hat u_{h}-\nabla^{2}\hat u_{I}\|_{L^2} + \|\hat u_{h}-\hat u_{I}\|_{L^\infty}
	&\leq C\; h^{\min\{m-1,r\}-\max\{1,\frac{s}{a}\}}.
	\end{align*}
	By the Sobolev embedding theorem, for $s\leq 4$ we can even choose the $a$ such that $a \ge s$.
	Then $\max\{1,\frac{s}{a}\}=1$, and from $m\geq 2$ it follows that $\min\{m-1,r\}\geq 1$.
	Thus, $\|\nabla^{2}u_{h}\|_{L^{2}}$ can be bounded independently of $h$, and we obtain~\eqref{eq:h1_inverse}.
\end{proof}

\section{Numerical interpolation error tests}

We now show numerically that the optimal interpolation error orders predicted by
Theorem~\ref{thm::main_appr} can really
be observed in practice.  We test this for maps into the unit sphere $S^2$ and into $\text{SO}(3)$.
All algorithms are implemented in C++ using the \textsc{Dune}
libraries~\cite{bastian_et_al_II:2008}.

\subsection{Maps into the unit sphere}
\label{sec:interpolation_errors_sphere}

\begin{figure}
 \begin{center}
    \includegraphics[width=0.3\textwidth]{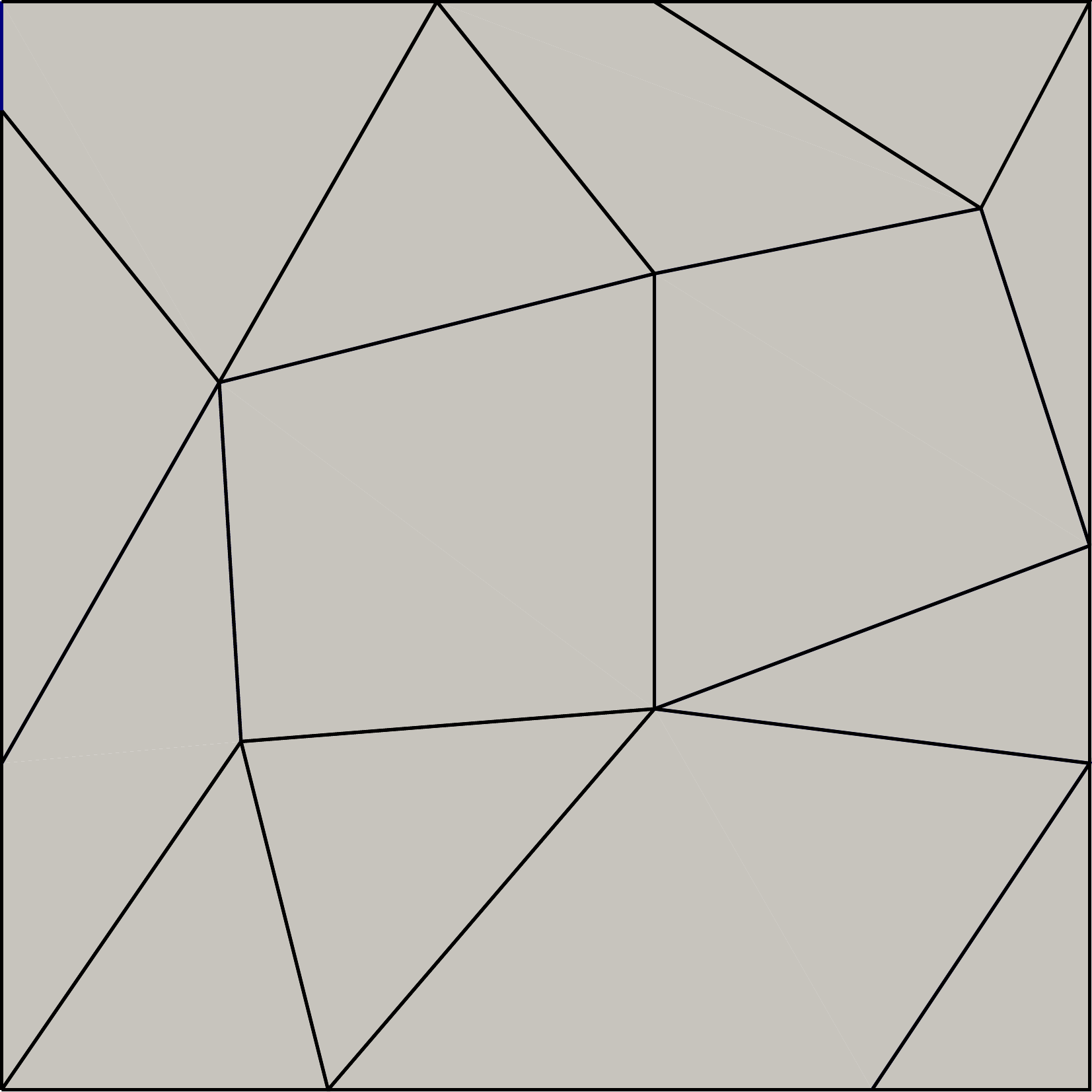}
    \hspace{0.08\textwidth}
    \includegraphics[width=0.6\textwidth, trim=0 -50 0 0]{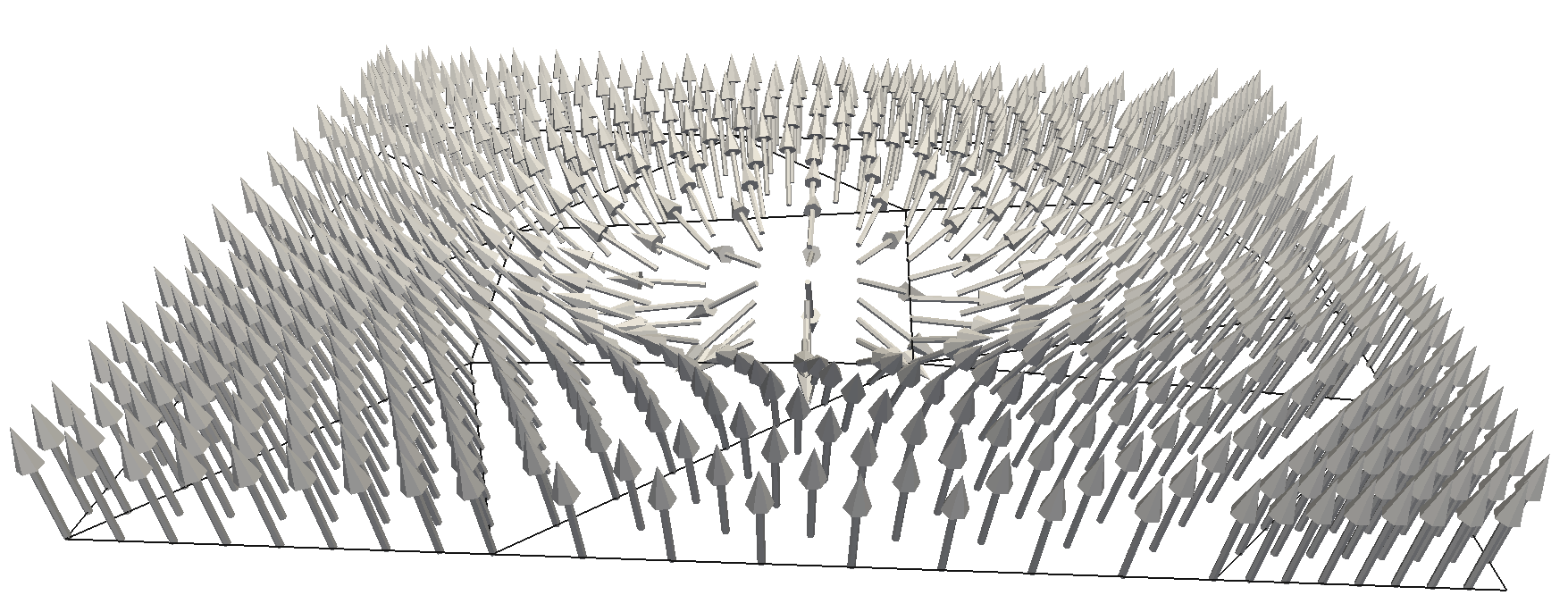}
 \end{center}
\caption{Interpolating the inverse stereographic projection $\invstproj : \R^2 \to S^2$.
   Left: coarsest grid. Right: function values}
\label{fig:inverse_stereographic_projection}
\end{figure}

Our first example measures the $L^2$ and $H^1$ interpolation errors for maps into the unit sphere $S^2 \subset \R^3$.
As the domain we use the square $\Omega = (-5,5)^2$, and we measure the error of interpolating the
inverse stereographic projection
\begin{equation}
\label{eq:inverse_stereographic_projection}
 \invstproj : \R^2 \to S^2,
 \qquad \qquad
 \invstproj(x)
 \colonequals
 \bigg( \frac{2x_0}{\abs{x}^2+1}, \frac{2x_1}{\abs{x}^2+1}, \frac{\abs{x}^2-1}{\abs{x}^2+1} \bigg)^T,
\end{equation}
restricted to $\Omega$.
This function is in $C^\infty$, and we can therefore hope for optimal interpolation error orders.

We discretize the domain with the grid shown in Figure~\ref{fig:inverse_stereographic_projection}.
Observe that it combines triangles and non-affine quadrilateral elements.
It is therefore slightly beyond the assumptions of Theorem~\ref{thm::main_appr}.
We create a sequence of grids by
refining the initial grid uniformly up to six times. On each grid we compute $Q_M \invstproj$
using projection-based finite elements of orders $p=1, 2, 3$, and we measure
the error $\norm{Q_M \invstproj - \invstproj}$ both in the $L^2(\Omega,\R^3)$-norm
and the $H^1(\Omega,\R^3)$-seminorm.
Sixth-order Gaussian quadrature rules are used for the integrals, but
note that since projection-based finite element functions are not piecewise polynomials in $\R^3$,
a small additional error due to numerical quadrature remains.

\begin{figure}
 \begin{center}
  \subfigure[Projection-based finite elements]{
   \includegraphics[width=0.49\textwidth]{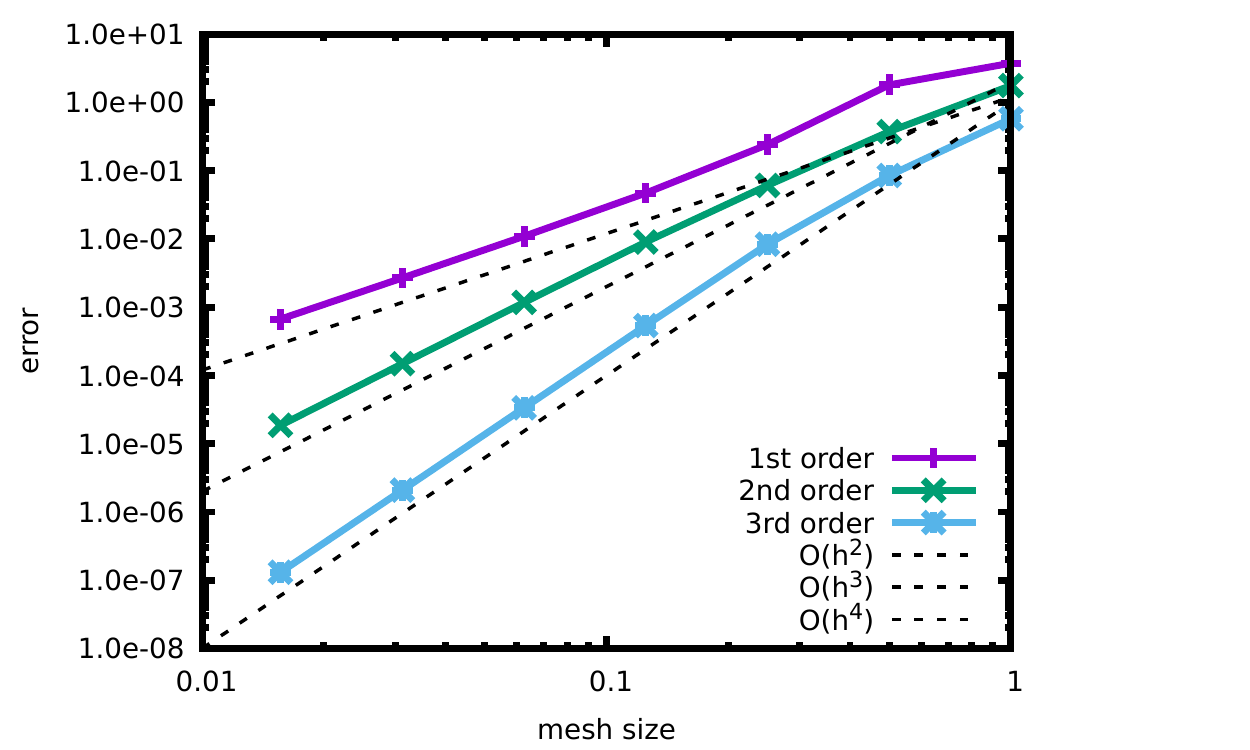}
   \includegraphics[width=0.49\textwidth]{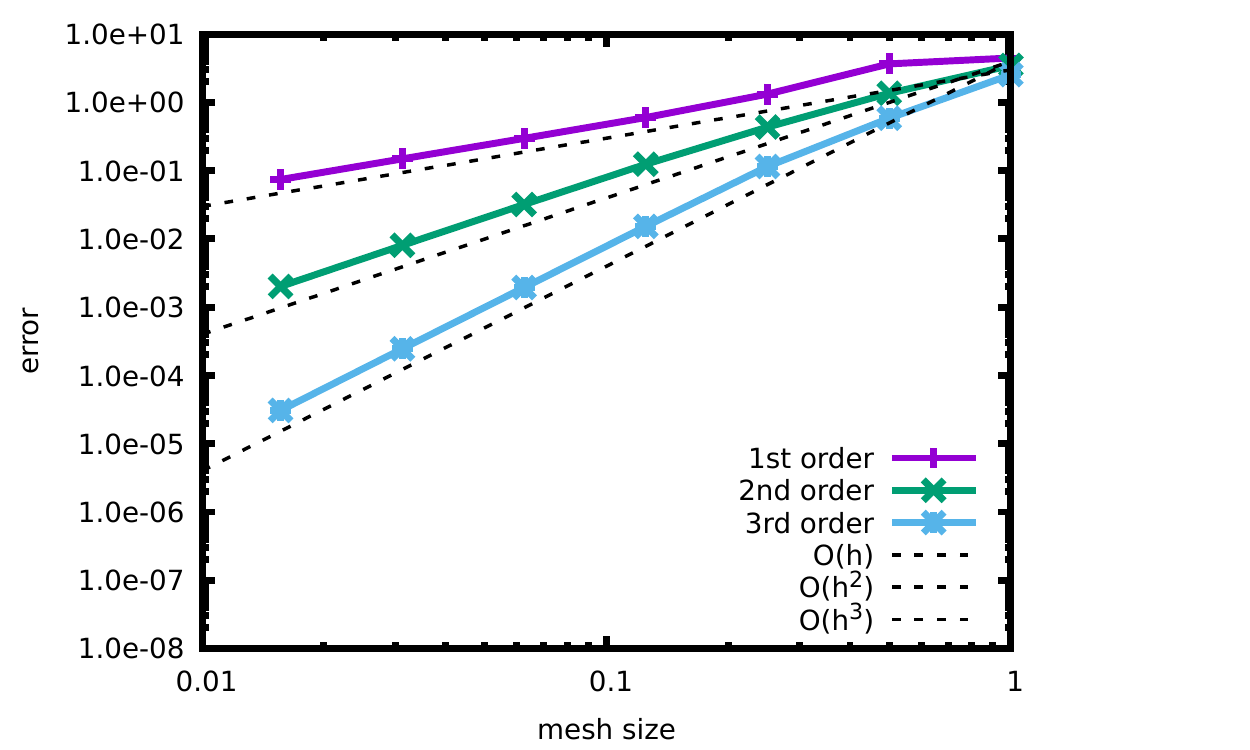}
   \label{fig:inv_ster_proj_approximation_error}
  }

  \subfigure[Geodesic finite elements]{
   \includegraphics[width=0.49\textwidth]{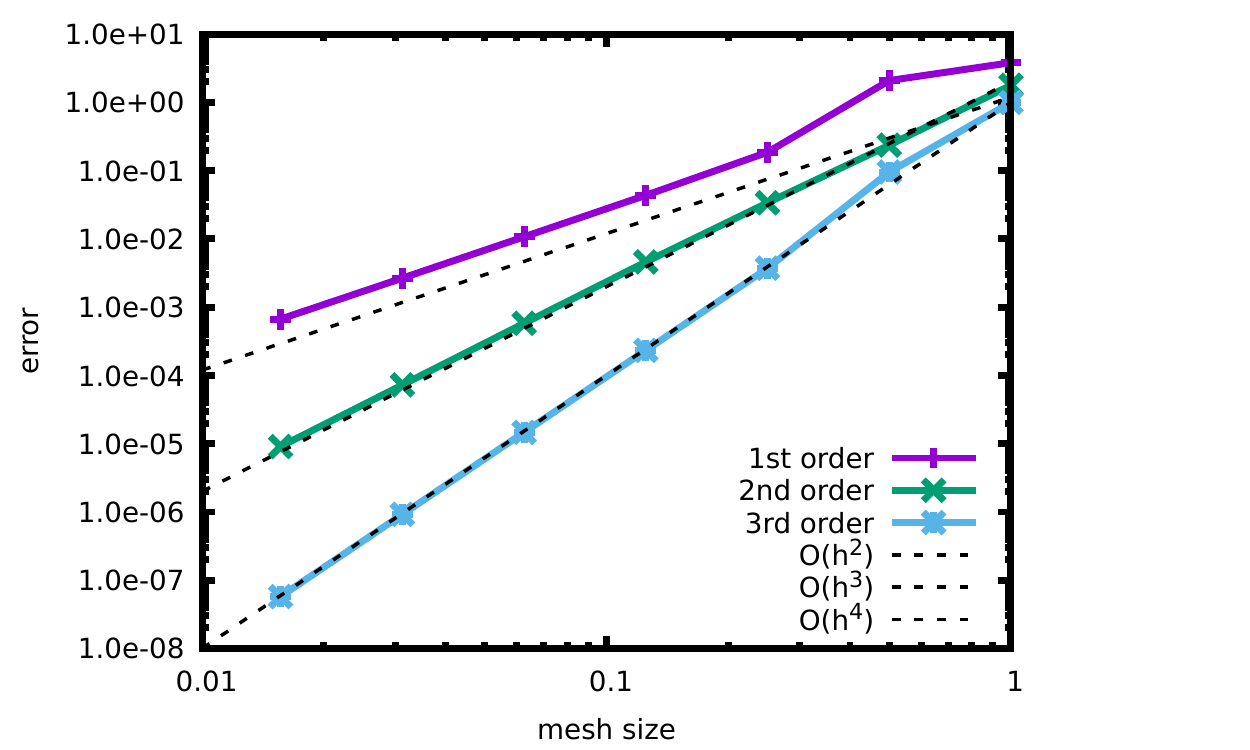}
   \includegraphics[width=0.49\textwidth]{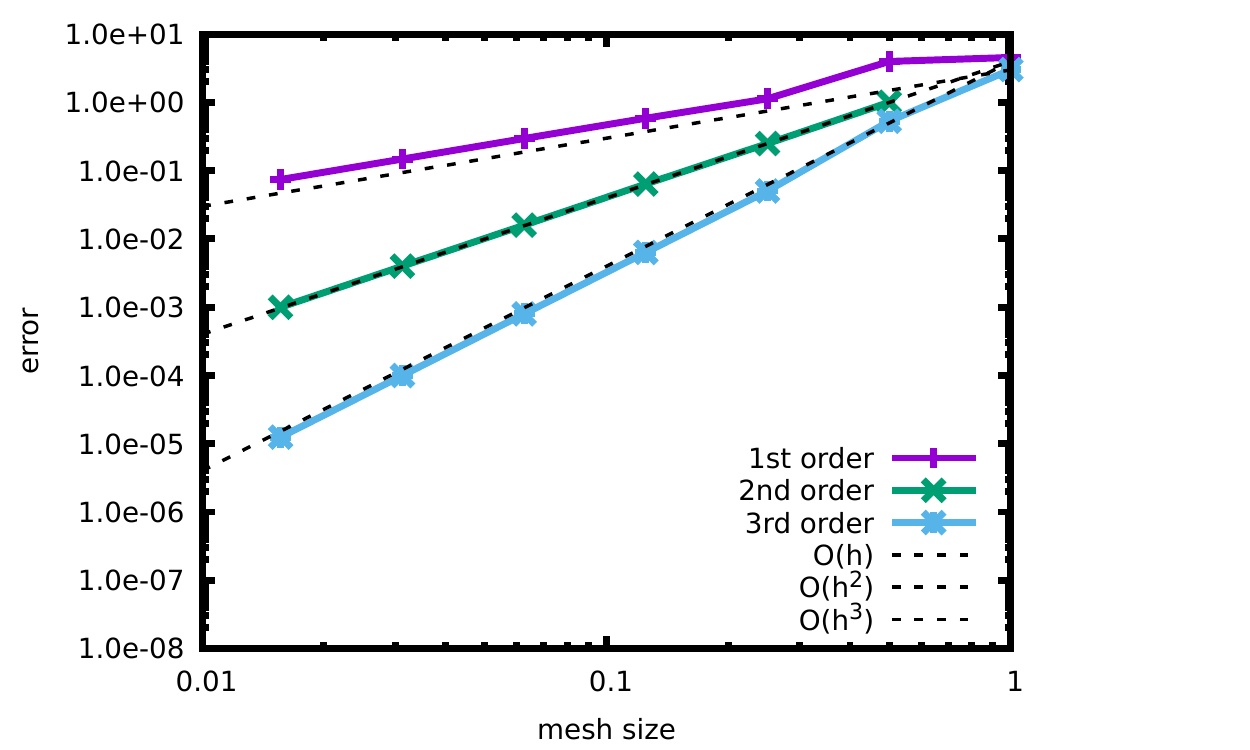}
   \label{fig:inv_ster_proj_approximation_error_gfe}
  }
 \end{center}
 \caption{Interpolation errors for the inverse stereographic projection.
          Left: $L^2$ norm, right: $H^1$ seminorm.
          The black dashed lines are at the same positions for both discretizations.}

\end{figure}

The results are plotted in Figures~\ref{fig:inv_ster_proj_approximation_error}.
As expected, the errors decay like $h^{p+1}$ for the $L^2$-norm, and like $h^p$ for
the $H^1$-seminorm.  These are the optimal orders predicted by the theory in Chapter~\ref{sec::errest}.

We now compare the projection-based discretization to a discretization using geodesic finite elements.
The resulting errors per mesh size are shown in Figure~\ref{fig:inv_ster_proj_approximation_error_gfe}.
One can see that the same asymptotic orders are obtained, as predicted by the interpolation
theory for geodesic finite elements~\cite{grohs_hardering_sander:2015,hardering:2016}.
However, the constant is consistently better for geodesic finite elements for orders $p=2$ and $p=3$.
Supposedly, the reason for this is that the intrinsic construction of geodesic finite elements captures
the geometry of $S^2$ better.

\subsection{Maps into the special orthogonal group}
\label{sec:approximation_error_so3}

\begin{figure}
 \begin{center}
  \subfigure[Projection-based finite elements]{
   \includegraphics[width=0.49\textwidth]{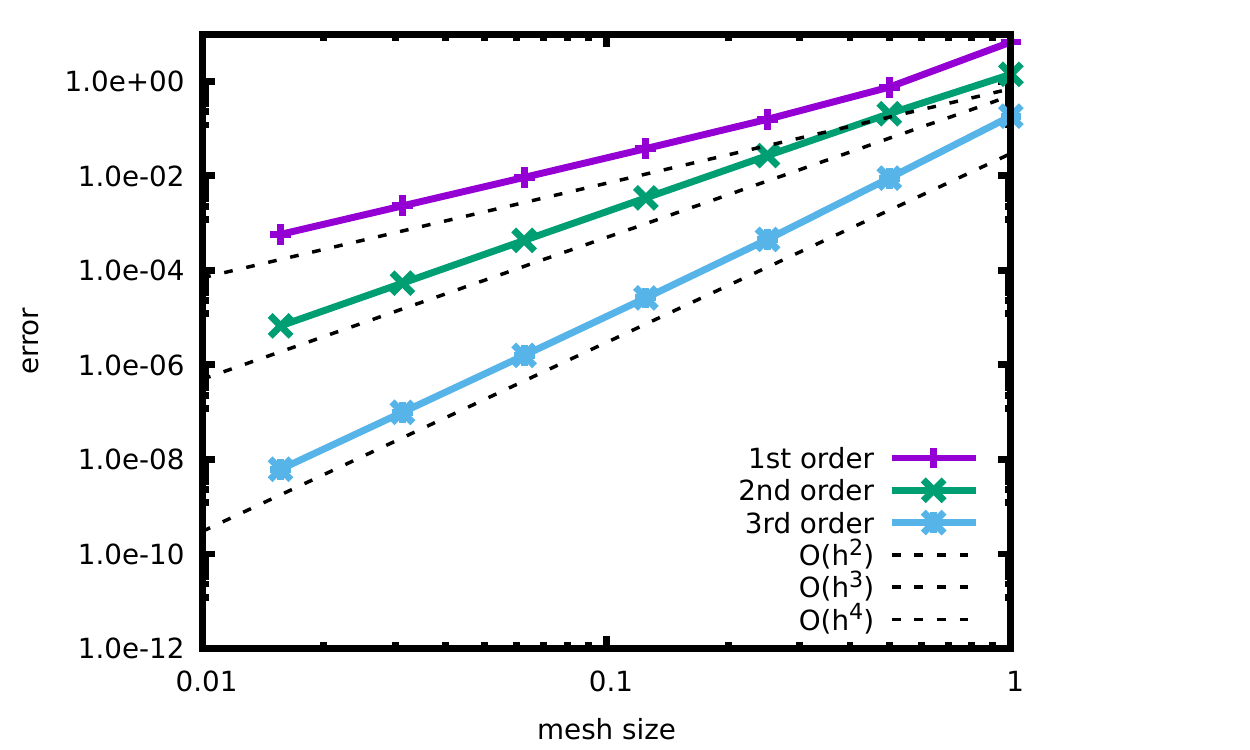}
   \includegraphics[width=0.49\textwidth]{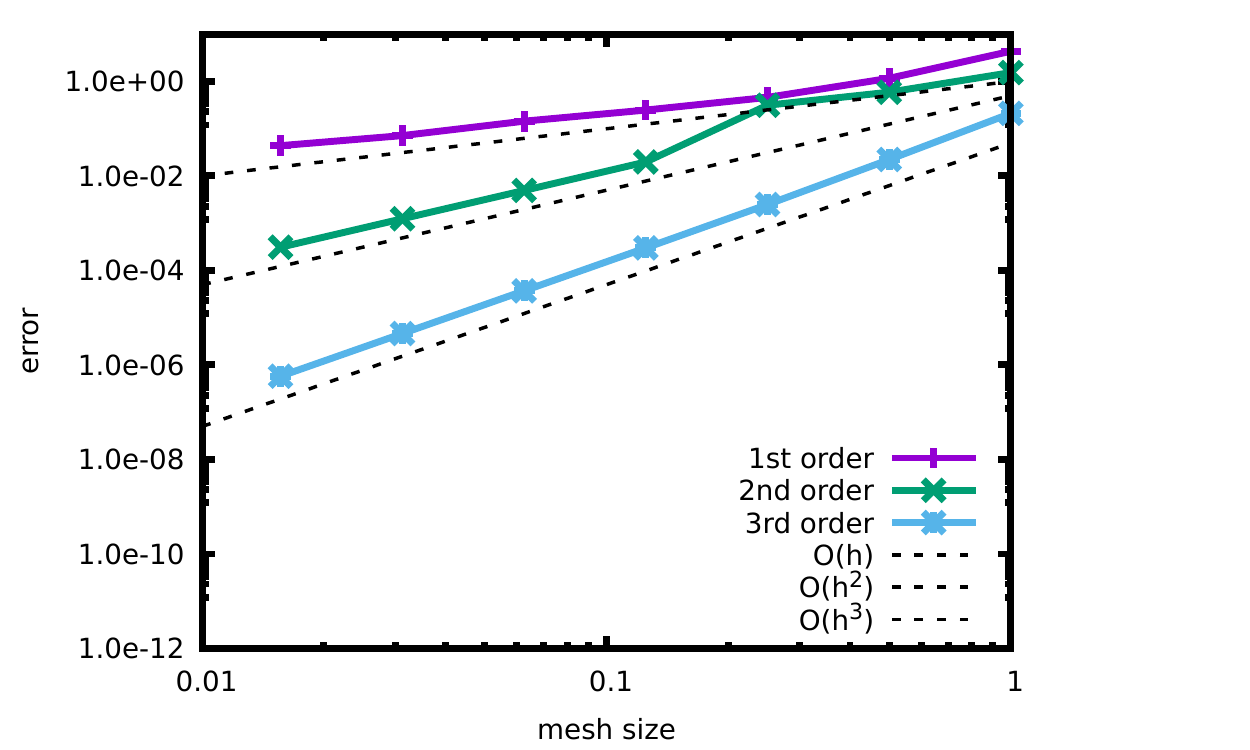}
   \label{fig:so3_approximation_error}
  }

  \subfigure[Geodesic finite elements]{
   \includegraphics[width=0.49\textwidth]{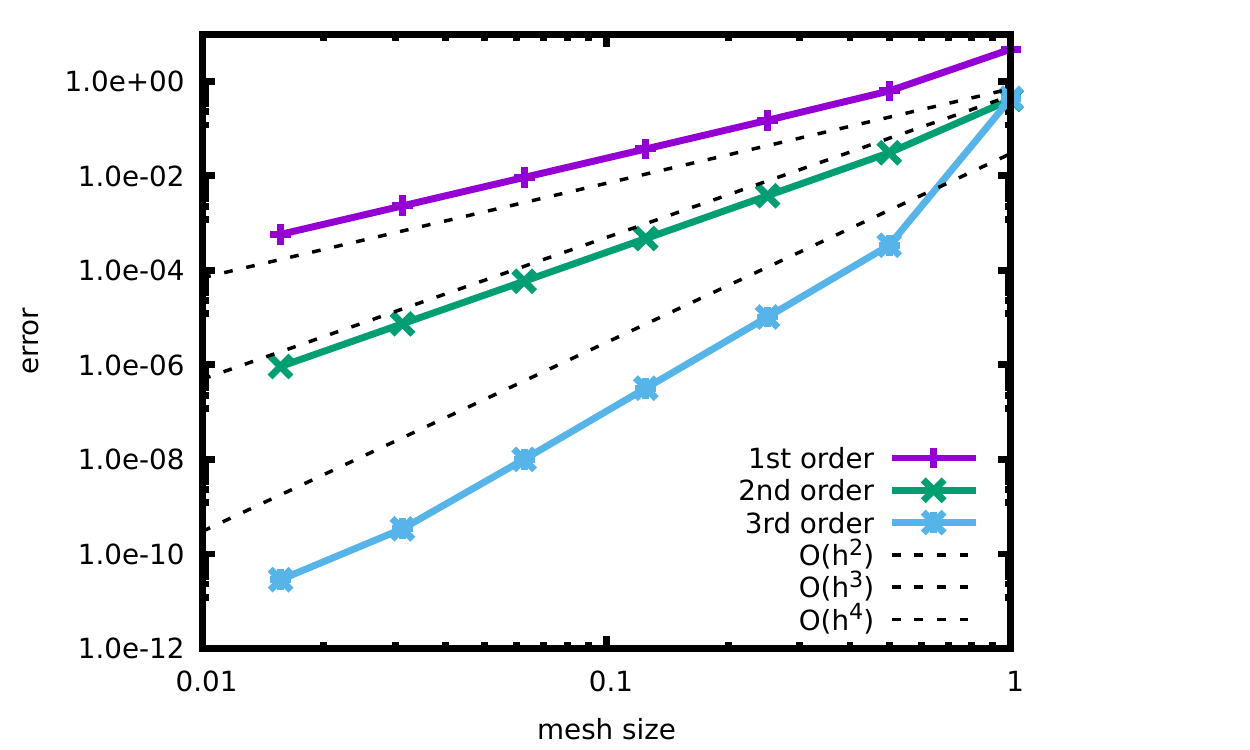}
   \includegraphics[width=0.49\textwidth]{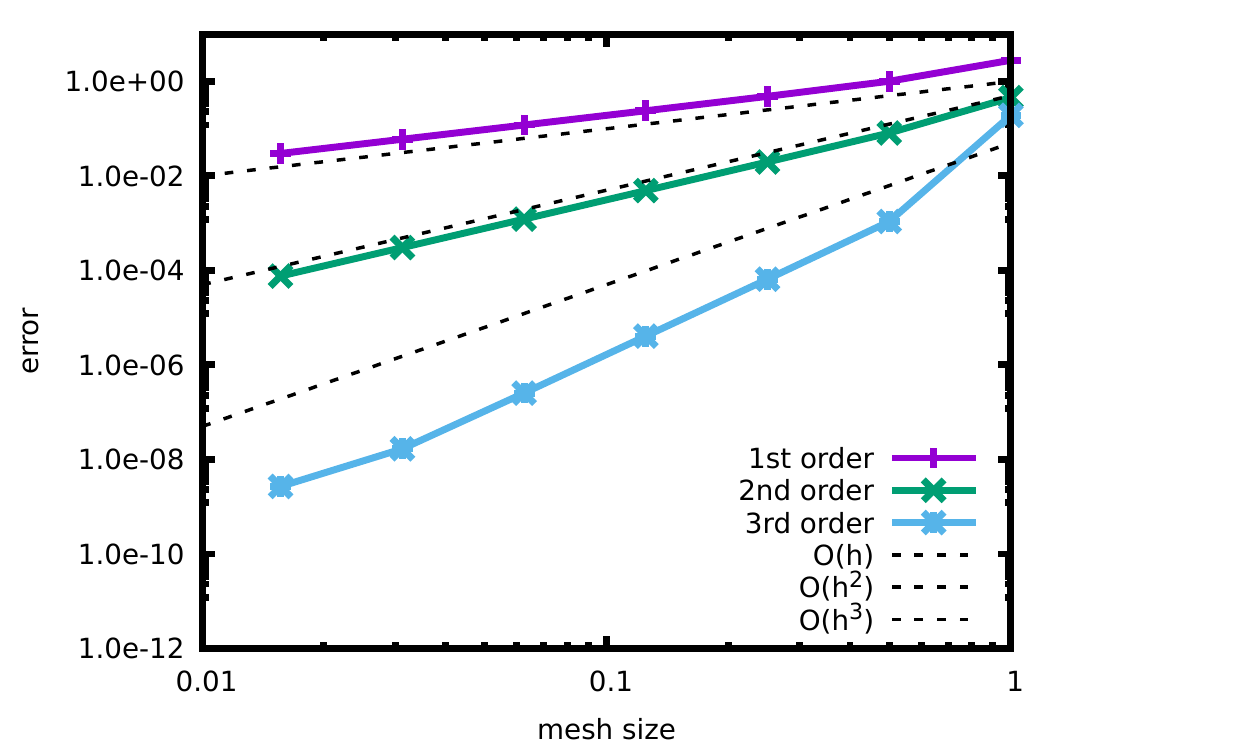}
   \label{fig:so3_approximation_error_gfe}
  }
 \end{center}
 \caption{Interpolation errors for a map into $\text{SO}(3)$.
          Left: $L^2$~norm, right: $H^1$ seminorm.
          The black dashed lines are at the same positions for both discretizations.}

\end{figure}

We repeat the experiment for a map into $\text{SO}(3)$.
As shown in~\cite{neff_lankeit_madeo:2014}, the closest-point projection $P$ of a general matrix
onto $\text{SO}(3)$ is
the orthogonal factor of the polar decomposition.  As such, $P(A)$ is defined for all $A \in \R^{3 \times 3}$,
and it is unique if $A$ is invertible.  In particular, this will be the case for all $A$ close enough to $\text{SO}(3)$.

To numerically compute the polar factor $P(A)$ of $A$ we use the iteration defined by $Q_0\colonequals A$ and
\begin{equation}
\label{eq:highams_method}
Q_{k+1}\colonequals \frac{1}{2}\left(Q_k+Q_k^{-T} \right),
\end{equation}
which is based on Heron's method for computing the square root of $1$.
\citet{higham} showed that this iteration converges quadratically to $P(A)$.
To compute $H^1$ norms of projection-based finite elements
we also need the derivative of the polar factor with respect to $A$.
Following \cite{gawlik_leok:2017b}, we use the iterative algorithm that results from differentiating~\eqref{eq:highams_method}.

For the domain of the example we use $\Omega = (-5,5)^2$ of the previous section,
and we also reuse the grid from Figure~\ref{fig:inverse_stereographic_projection}.
We will interpolate the function $R : \Omega \to \text{SO}(3)$,
\begin{equation}
\label{eq:SO3_test_function}
 R(x)
 \colonequals
 \begin{pmatrix}
  1 & 0 & 0 \\
  0 & \cos (\frac{\pi}{5} x_0) & -\sin (\frac{\pi}{5} x_0) \\
  0 & \sin (\frac{\pi}{5} x_0) &  \cos (\frac{\pi}{5} x_0)
 \end{pmatrix}
 \begin{pmatrix}
  \cos (\frac{\pi}{5} x_1) & 0 & -\sin (\frac{\pi}{5} x_1) \\
   0       & 1 & 0         \\
  \sin (\frac{\pi}{5} x_1) & 0 &  \cos (\frac{\pi}{5} x_1)
 \end{pmatrix}.
\end{equation}
Again we measure the errors in the $L^2$ norm and the $H^1$ seminorm on a set of seven grids
obtained by uniform refinement.
Interpolation errors for this scenario are plotted in Figure~\ref{fig:so3_approximation_error}.
As expected, we see the same optimal orders as for the case of mapping into the sphere.
Figure~\ref{fig:so3_approximation_error_gfe} shows the corresponding errors obtained using
geodesic finite elements. One observes the same effect as before: the convergence rates
are the same, but the constant is lower.
In fact, this effect is now more pronounced than before, and seems to increase with the order $p$.

\section{Numerical discretization error tests for harmonic maps}

In this second chapter of numerical results we present measurements of the discretization errors
of harmonic maps into $S^2$ and $\text{SO}(3)$. These confirm the theoretical predictions
of Chapter~\ref{disc_err_bnd}.  In addition,
we again compare the results to geodesic finite elements.

\subsection{Harmonic maps into the sphere \texorpdfstring{$S^2$}{S2}}

\begin{figure}
 \begin{center}
  \subfigure[Projection-based finite elements]{
   \includegraphics[width=0.46\textwidth]{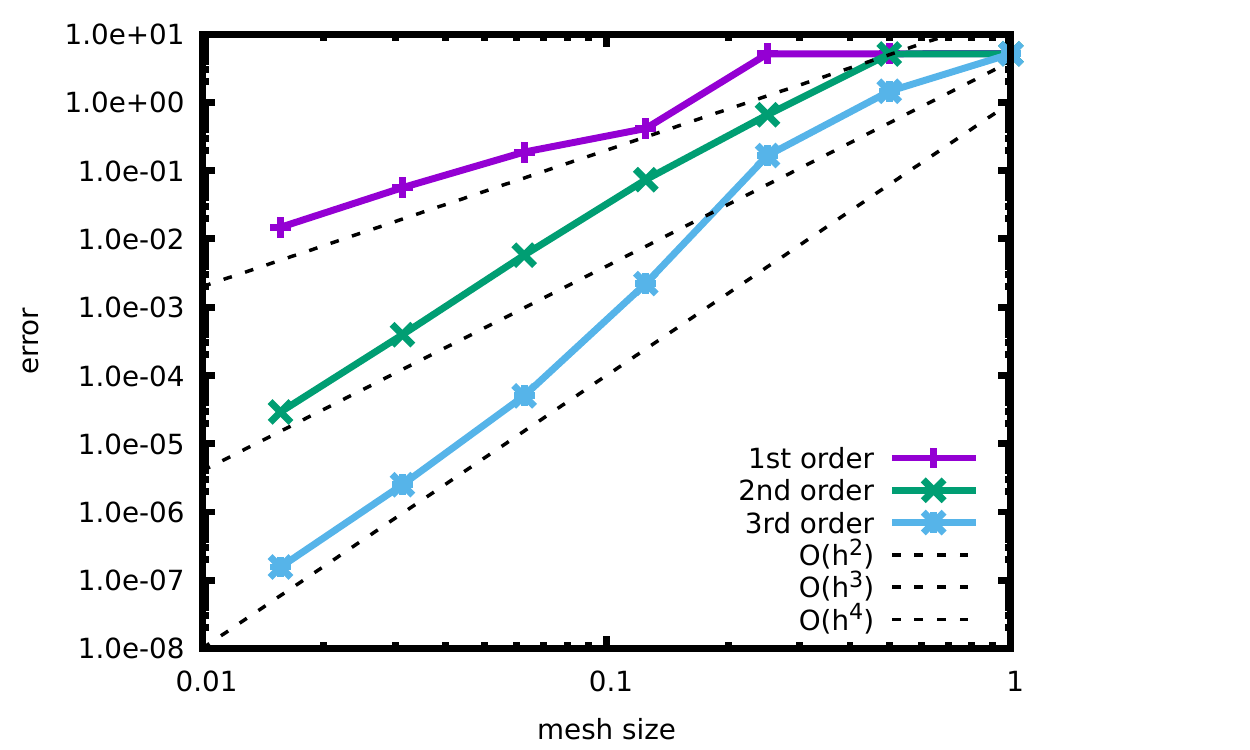}
   \includegraphics[width=0.46\textwidth]{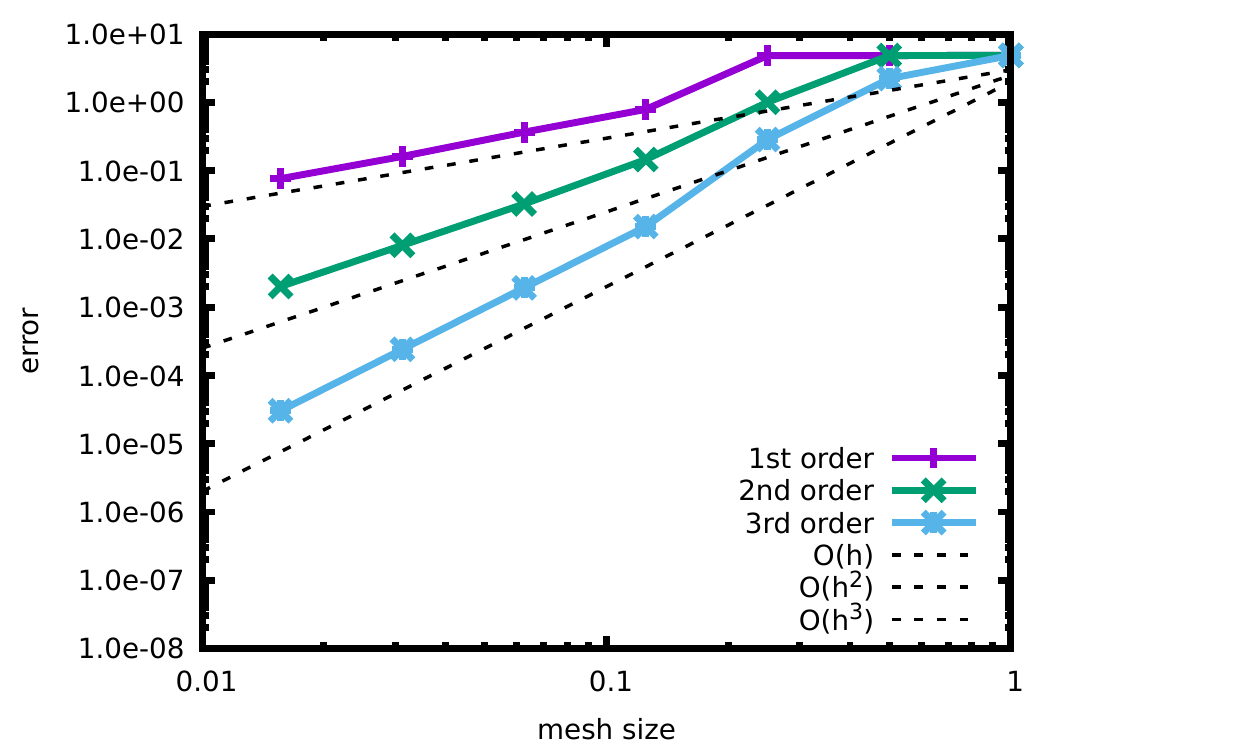}
   \label{fig:discretization_errors}
  }

  \subfigure[Geodesic finite elements]{
   \includegraphics[width=0.46\textwidth]{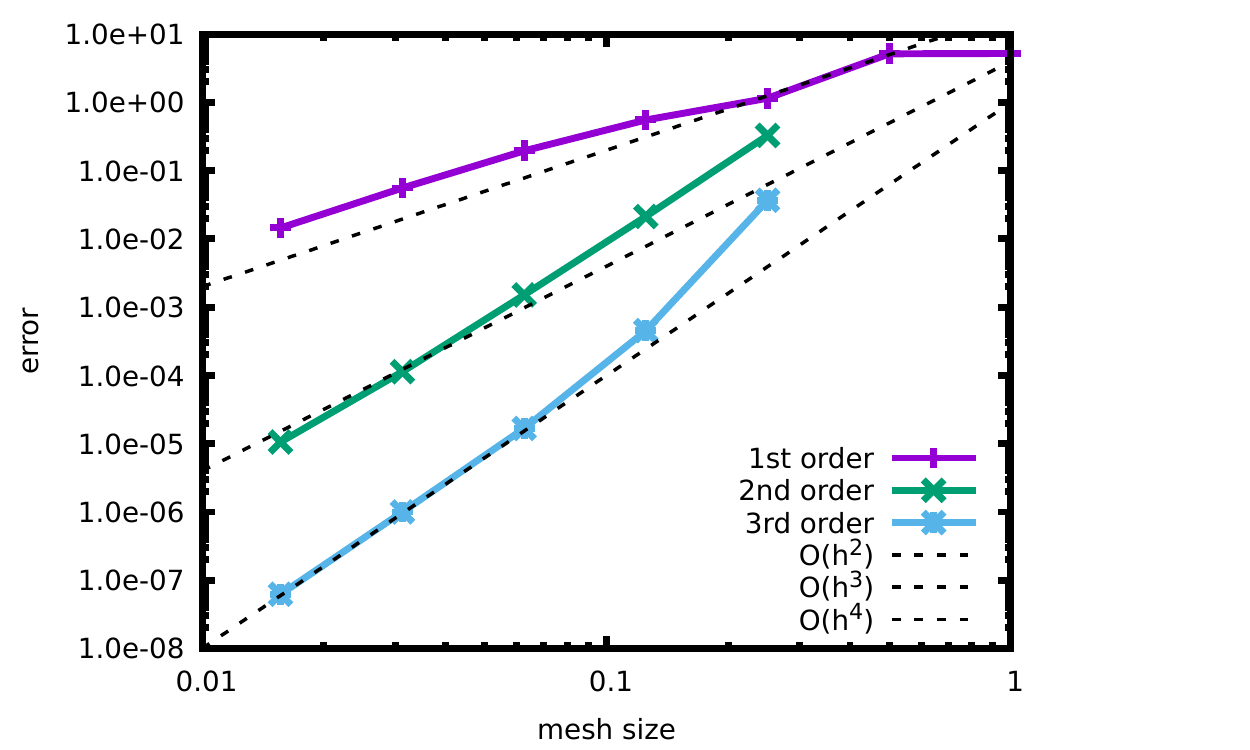}
   \includegraphics[width=0.46\textwidth]{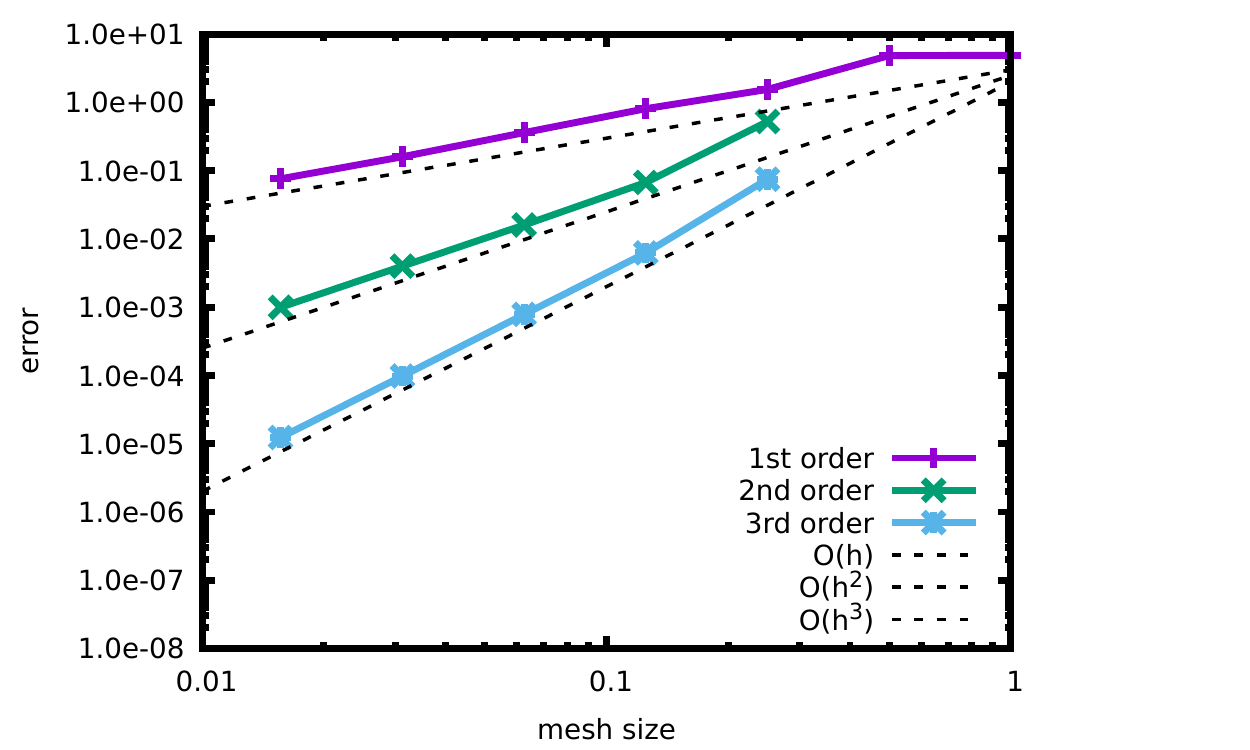}
   \label{fig:discretization_errors-gfe}
  }
\end{center}
 \caption{Discretization errors  for a harmonic map into $S^2$ as a function of normalized grid edge length.
 Left: $L^2$-norm. Right: $H^1$-seminorm.
 The black dashed lines are at the same positions for both discretizations.}
\end{figure}

The example builds on top of the interpolation error measurements of Section~\ref{sec:interpolation_errors_sphere}.
Reusing the domain $\Omega = (-5,5)^2$ and grids from there, we compute harmonic maps
from $\Omega$ into $S^2$ that take the values given by the inverse stereographic projection
function $\invstproj$ defined in~\eqref{eq:inverse_stereographic_projection} on the boundary of $\Omega$
in spaces of projection-based finite elements of orders $p=1,2,3$ mapping into $S^2$.
It is shown by \cite{belavin_polyakov:1975} (see also \cite{melcher:2014}), that the inverse
stereographic projection $\invstproj$ itself
minimizes the harmonic energy~\eqref{eq:harmEnergy} in the first non-trivial homotopy group in $H^1(\R^2, S^2)$.
This function is $C^\infty$ and we can therefore hope for optimal discretization error orders.

Using the canonical embedding of $S^2$ into $\R^3$, and the metric on $S^2$ induced by
the embedding, the integrand $\abs{\nabla v}^2$ of~\eqref{eq:harmEnergy} has the coordinate representation
\begin{equation*}
 \abs{\nabla v}^2 =  \sum_{i=1}^s \sum_{a=1}^3 \Big(\frac{\partial v^a}{\partial x^i} \Big)^2,
\end{equation*}
that is, $\nabla v$ is a $3 \times s$-matrix and $\abs{\,\cdot\,}$ the Frobenius norm.
We compute minimizers of the discrete energy using the approach proposed in~\cite{sander2012}
for geodesic finite elements. Identifying discrete functions with sets of coefficients in $S^2$,
we obtain an algebraic minimization problem posed on the nonlinear manifold
$(S^2)^n$, where $n$ is the number of Lagrange nodes on the grid.
This minimization problem is solved using
the Riemannian trust-region method introduced by \cite{absil_mahony_sepulchre:2008}
together with the inner monotone multigrid solver described in \cite{sander2012}.
Gradient and Hessian of the energy functional are computed using the ADOL-C
automatic differentiation software \cite{walther_griewank:2012}, and
the formula derived in~\cite{absil_mahony_trumpf:2013} to obtain the Riemannian Hessian
from the Euclidean Hessian.

The Riemannian trust-region solver is set to iterate until the maximum norm of the correction
drops below $10^{-6}$.  We then compute solutions $(\invstproj)_p^k$, $k=0,\dots,6$, $p=1,2,3$
on the grids obtained by $k$ steps of uniform refinement, and compute the errors
\begin{equation*}
 e^k_p = \norm{v^k_p - \invstproj},
\qquad
k=0,\dots,6,
\qquad
p=1,2,3,
\end{equation*}
where $\norm{\cdot}$ is either the norm in $L^2(\Omega,\R^3)$, or the half norm in $H^1(\Omega,\R^3)$.
Figure~\ref{fig:discretization_errors} shows the errors $e_p^k$ as functions of the normalized mesh
size $h$. We see that for $p$-th order finite elements the $L^2$-error decreases
like $h^{p+1}$, and the $H^1$-error decreases like $h^p$.
Hence we can reproduce the optimal convergence behavior predicted by
Theorems~\ref{T:H1Discerr} and~\ref{thm:L2_discretization_error_bound}.

We now compare the projection-based discretization to a discretization using geodesic finite elements.
The resulting errors per mesh size are shown in Figure~\ref{fig:discretization_errors-gfe}.
One can see that the same asymptotic orders are obtained, as predicted by
theory~\cite{grohs_hardering_sander:2015,hardering:2018}. As in the interpolation case,
the constant is slightly better for geodesic finite elements.
On the other hand, one can see that the graphs in Figure~\ref{fig:discretization_errors-gfe} do not
contain values for the two coarsest grids and approximation orders~2 and~3. This is because the
minimization problem that defines geodesic interpolation was actually ill-defined on at least
one grid element in these cases. The problem does not happen for projection-based finite elements
for this example.

\begin{figure}
 \begin{center}
  \includegraphics[width=0.5\textwidth]{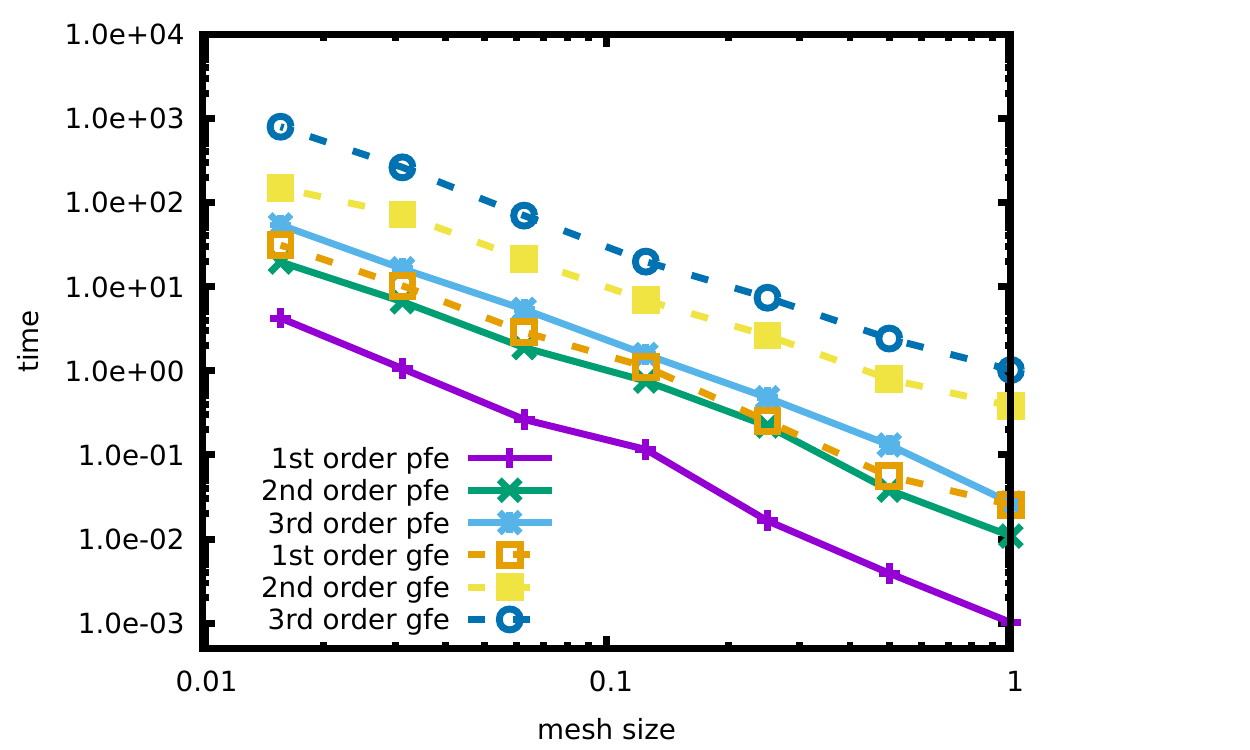}
 \end{center}
 \caption{Wall-time needed to compute the harmonic energy on seven different grids and
   approximation orders $p = 1, 2, 3$. Solid: projection-based finite elements.
   Dashed: geodesic finite elements.}
 \label{fig:run_time_stereographic}
\end{figure}

The decisive argument for projection-based finite elements for this scenario, however, is run-time.
Figure~\ref{fig:run_time_stereographic} plots the total time needed to compute the harmonic energy
for the different finite element spaces and grid resolutions.
Projection-based finite elements need only about 10\,\% of the time of geodesic finite elements.
This is of course because projection-based interpolation is given by a simple closed-form formula
in the case of $M=S^2$, whereas for geodesic finite elements it involves numerically solving
a small minimization problem~\eqref{eq:geodesic_interpolation}.
By means of automatic differentiation, these differences appear
in the computation of derivatives as well.
In practical applications of sphere-valued problems, projection-based finite elements are therefore
typically preferable to geodesic finite elements.

\subsection{Harmonic maps into \texorpdfstring{$\text{SO}(3)$}{SO(3)}}

In the final example we compute minimizers of the harmonic energy in a space of functions
mapping to $\text{SO}(3)$. As in Section~\ref{sec:approximation_error_so3}, we use the canonical
embedding of $\text{SO(3)}$ into $\R^{3 \times 3}$, and the polar factor as the projection
onto $\text{SO}(3)$ (even though the implementation uses quaternions to actually store
elements of $\text{SO}(3)$). The iteration~\eqref{eq:highams_method}
used to compute the polar factor is a variant of a Newton method, and therefore
plays nicely with automatic differentiation systems like ADOL-C~\cite[Chap.\,15]{griewank_walther:2008}.

We base our numerical test on the interpolation error test of Section~\ref{sec:approximation_error_so3}.
On the domain $\Omega$ given there, we look for minimizers of the harmonic energy in $H^1(\Omega,\text{SO}(3))$,
subject to Dirichlet boundary and homotopy constraints given by the function $R$ defined in \eqref{eq:SO3_test_function}.
As the solution of this problem is not known in closed form, we compute discretization errors with respect
to a numerical reference solution. For this we refine the grid uniformly 6~times, and compute the solution
there. We then trust this to be a good reference solution for grids with up to 5~steps of refinement.

\begin{figure}
 \begin{center}
  \subfigure[Projection-based finite elements]{
   \includegraphics[width=0.46\textwidth]{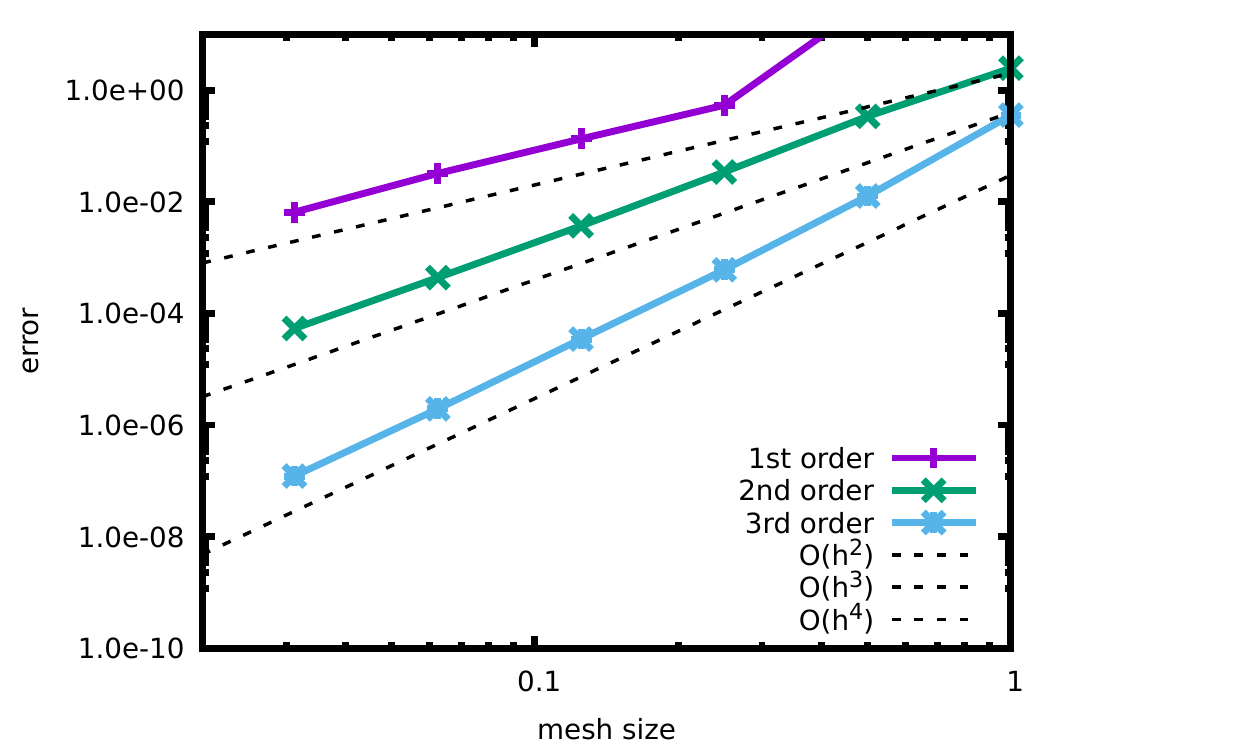}
   \includegraphics[width=0.46\textwidth]{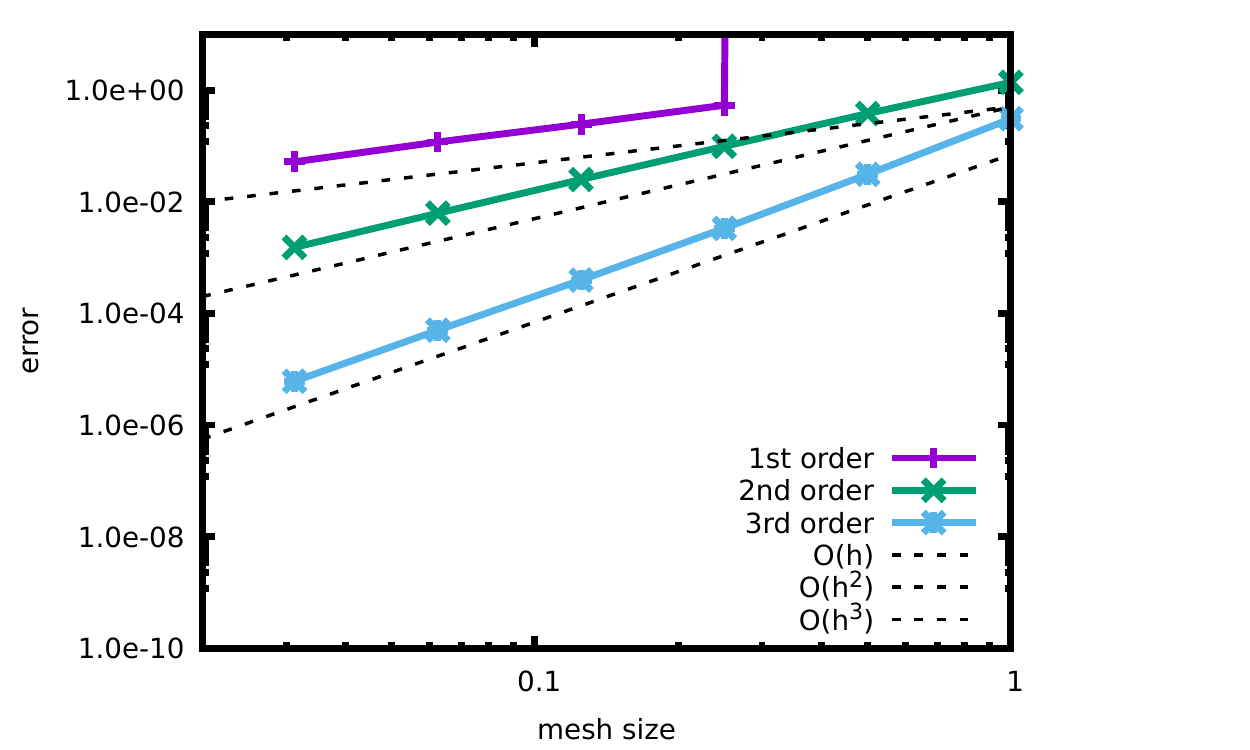}
   \label{fig:discretization_errors_so3}
  }

  \subfigure[Geodesic finite elements]{
   \includegraphics[width=0.46\textwidth]{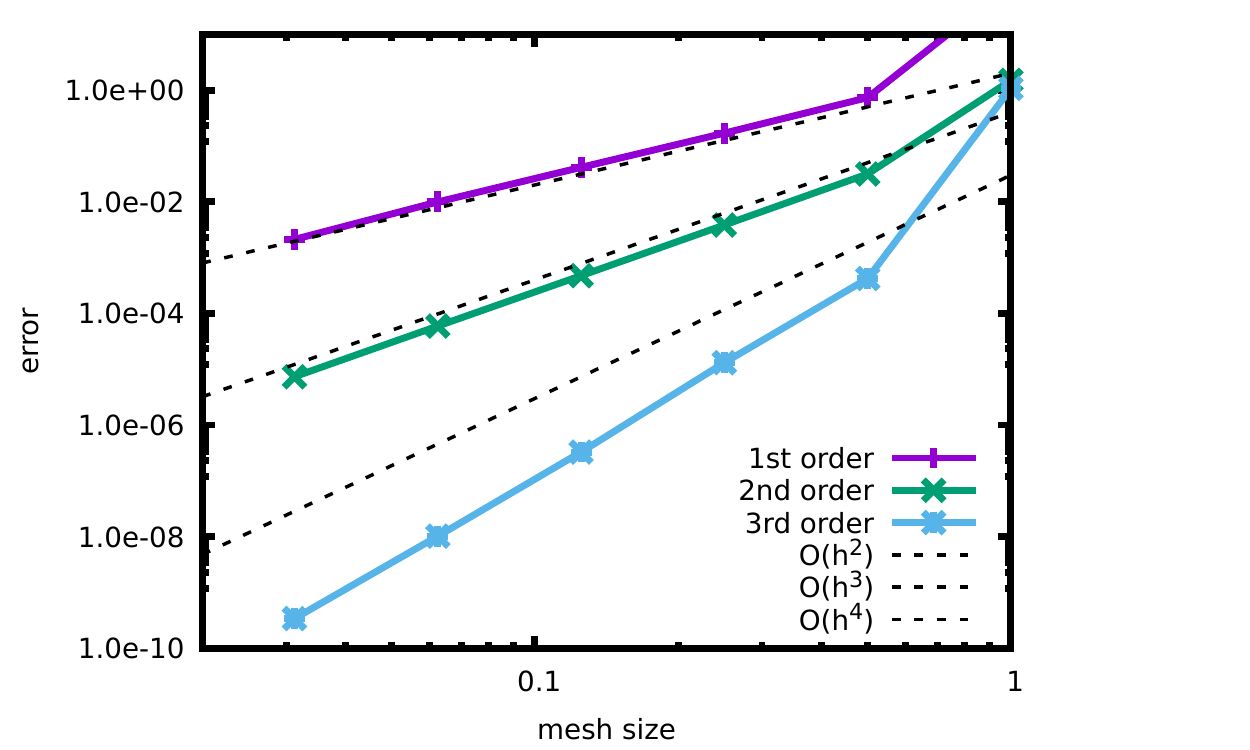}
   \includegraphics[width=0.46\textwidth]{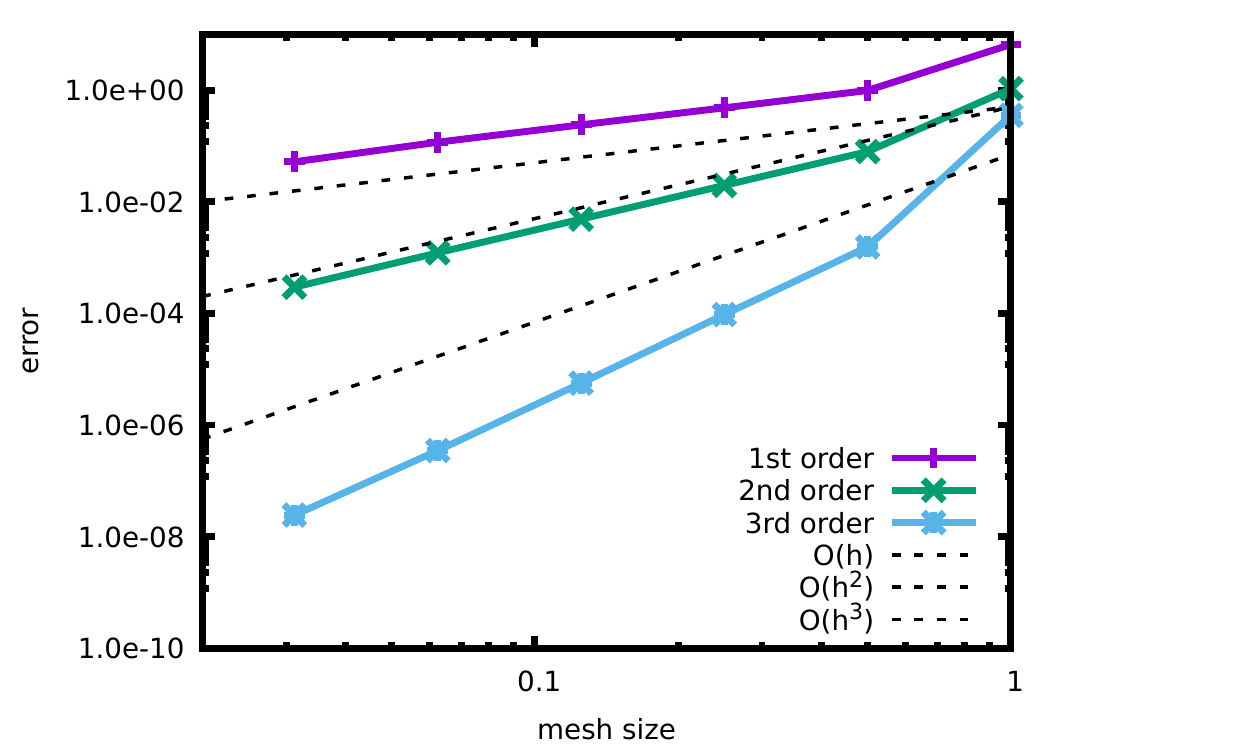}
   \label{fig:discretization_errors_so3_gfe}
  }
 \end{center}
 \caption{Discretization errors for a harmonic map into $\text{SO}(3)$ as a function of normalized grid edge length.
 Left: $L^2$~norm. Right: $H^1$~seminorm.
 The black dashed lines are at the same positions for both discretizations.}
\end{figure}

Figure~\ref{fig:discretization_errors_so3} shows the discretization error plots for the $L^2$ and $H^1$
errors, again for approximation spaces of orders up to $3$. We observe the expected optimal discretization
error rates in all cases.

Finally, we redo the experiment with geodesic finite elements.  Figure~\ref{fig:discretization_errors_so3_gfe}
shows the discretization errors per mesh size for the same problem, but using a geodesic finite element
discretization. The constant is again better than for projection-based finite elements,
and the difference seems to increase with $p$.

\begin{figure}
 \begin{center}
  \includegraphics[width=0.5\textwidth]{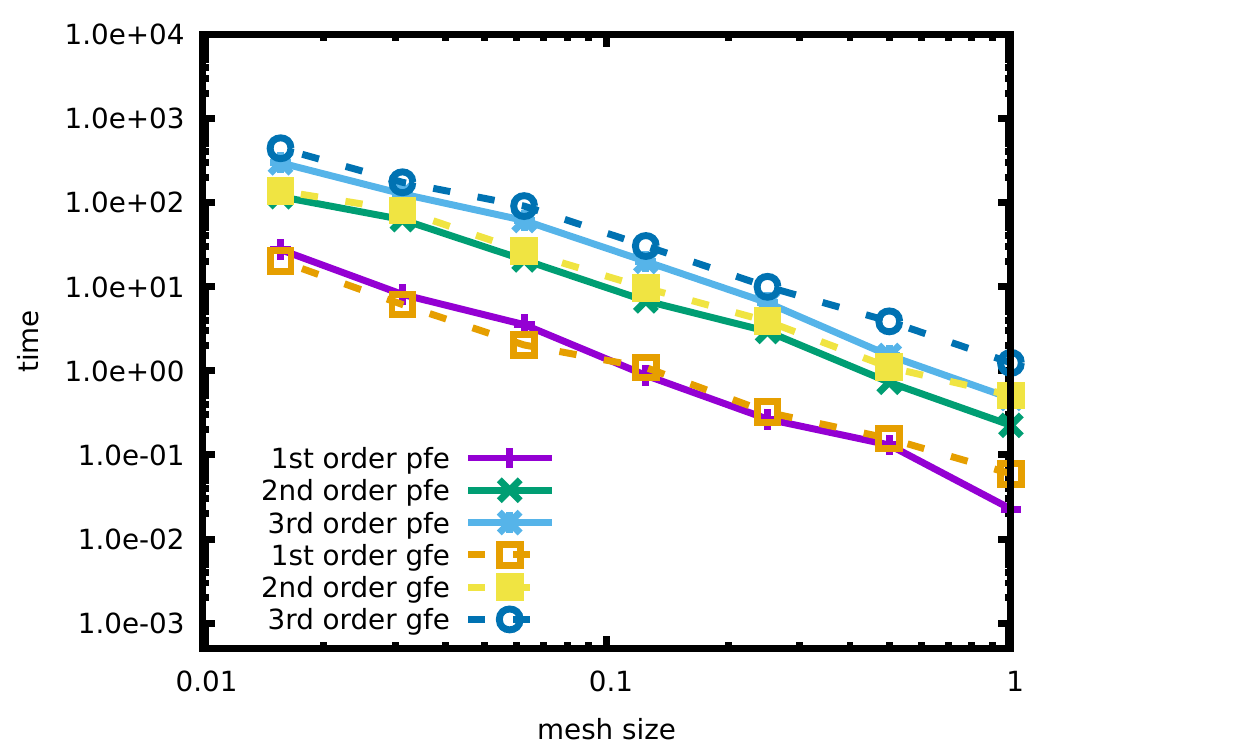}
 \end{center}
 \caption{Wall-time needed to compute the harmonic energy of the function $R$ on seven different grids and
   approximation orders $p = 1, 2, 3$. Solid: projection-based finite elements.
   Dashed: geodesic finite elements.}
 \label{fig:run_time_so3_synthetic}
\end{figure}

When comparing the run-times again (Figure~\ref{fig:run_time_so3_synthetic}), the situation is vastly different.
While projection-based finite elements were much faster than geodesic ones for the case of maps
into the sphere, there is hardly a difference for $M = \text{SO}(3)$. This is because the
projection from $\R^{3\times 3}$ to $\text{SO}(3)$ is not given in a simple closed form, but has
to be computed iteratively (Section~\ref{sec:approximation_error_so3}).
This puts the execution speed of projection-based finite elements on par with geodesic finite elements,
if the projection is the polar decomposition in $\R^{3 \times 3}$.

\bibliographystyle{abbrvnat}
\bibliography{grohs-hardering-sander-sprecher-pfe-errors}


\end{document}